\let\csname equation*\endcsname\relax
\let\csname endequation*\endcsname\relax
\def\bR{\begin{color}{red}}
\def\bB{\begin{color}{blue}}
\def\bM{\begin{color}{magenta}}
\def\bC{\begin{color}{cyan}}
\def\bW{\begin{color}{white}}
\def\bBl{\begin{color}{black}}
\def\bG{\begin{color}{green}}
\def\bY{\begin{color}{yellow}}
\def\e{\end{color}}
\newtheorem{theorem}{Theorem}
\newtheorem{lemma}[theorem]{Lemma}
\newtheorem{corollary}[theorem]{Corollary}
\newtheorem{definition}[theorem]{Definition}
\newtheorem{example}[theorem]{Example}
\newtheorem{counter}[theorem]{Counterexample}
\newtheorem{remark}[theorem]{Remark}
\numberwithin{theorem}{section}
\newcommand{\define}[1]{\textbf{#1}}
\newcommand{\reason}[1]{\;\;\left\{\; #1 \;\right\}}
\newcommand{\bb}[1]{\mathbb{#1}}
\newcommand{\given}{\, | \,}
\newcommand{\concat}{\mathbin{+\mkern-10mu+}}
\newcommand{\theoryeq}[1]{=_\mathbb{#1}}
\newcommand{\specialopS}{\mathbf{s}}
\newcommand{\specialopT}{\mathbf{t}}
\newcommand{\anyopS}{\phi}
\newcommand{\signatureS}{\Sigma^\bb{S}}
\newcommand{\signatureT}{\Sigma^\bb{T}}
\newcommand{\eqT}{E^\bb{T}}
\newcommand{\essuniqfunctionA}{f}
\newcommand{\essuniqfunctionB}{f'}
\mathchardef\mathhyphen="2D
\newcommand{\alge}[2]{(#1, #2)\mathhyphen\mathbf{Alg}}
\newcommand{\catset}{\mathbf{Set}}
\newcommand{\refsec}{Section}
\newcommand{\refthm}{Theorem}
\newcommand{\refthms}{Theorems}
\newcommand{\reflem}{Lemma}
\newcommand{\refex}{Example}
\newcommand{\refexs}{Examples}
\newcommand{\refcounter}{Counterexample}
\newcommand{\reffig}{Figure}
\newcommand{\refcor}{Corollary}
\newcommand{\refeq}{Equation}
\newcommand{\refeqs}{Equations}
\newcommand{\refdef}{Definition}
\newcommand{\refpro}{property}
\newcommand{\reftab}{Table}
\newcommand{\refrem}{Remark}
\DeclareMathOperator{\var}{var}
\DeclareMathOperator{\supp}{supp}
\DeclareMathOperator{\const}{const}
\DeclareMathOperator{\fst}{head}
\newcommand{\plotkinspecial}{\Xi}
\newcommand{\treemonad}{B}
\title[No-Go Theorems for Distributive Laws]
      {Don't Try This at Home: \\No-Go Theorems for Distributive Laws}
 \author[M. Zwart and D. Marsden]
        {MAAIKE ZWART and DAN MARSDEN\footnote{Authors are ordered according to the outcome of a random number generator.}\\
         Department of Computer Science, University of Oxford\\
         \email{maaike.zwart@cs.ox.ac.uk, daniel.marsden@cs.ox.ac.uk}}
\begin{document}
\maketitle

\begin{abstract}
Beck's distributive laws provide sufficient conditions under which two monads can be composed, and monads arising from distributive laws have many desirable theoretical properties. Unfortunately, finding and verifying distributive laws, or establishing if one even exists, can be extremely difficult and error-prone.

We develop general-purpose techniques for showing when there can be no distributive law between two monads.
Two approaches are presented. The first widely generalizes ideas from a counterexample attributed to Plotkin, yielding general-purpose theorems that recover the previously known situations in which no distributive law can exist. Our second approach is entirely novel, encompassing new practical situations beyond our generalization of Plotkin's approach. It negatively resolves the open question of whether the list monad distributes over itself.

Our approach adopts an algebraic perspective throughout, exploiting a syntactic characterization of distributive laws. This approach is key to generalizing beyond what has been achieved by direct calculations in previous work. We show via examples many situations in which our theorems can be applied. This includes a detailed analysis of distributive laws for members of an extension of the Boom type hierarchy, well known to functional programmers.
\end{abstract}

\section{Introduction}
Monads have become a key tool in computer science. They are, amongst other things, used to provide semantics for computational effects such as state, exceptions and IO~\citep{Moggi1991}, and to structure functional programs~\citep{Wadler1995,PeytonJones2001}. They even appear explicitly in the standard library of the Haskell programming language~\citep{HaskellReport}.

Monads are a categorical concept. A monad on a category~$\mathcal{C}$ consists of an endofunctor~$T$ and two natural transformations~$1 \Rightarrow T$ and~$\mu: T \circ T \Rightarrow T$ satisfying axioms described in \refsec~\ref{sec:monads}. Given two monads with underlying functors~$S$ and~$T$, it is natural to ask if~$T \circ S$ always carries the structure of a monad. This would, for example, provide a modular way to model complex computational effects by combining simpler monads together. Unfortunately, in general monads cannot be combined in this way. Beck has shown that the existence of a~\emph{distributive law} provides sufficient conditions for~$T \circ S$ to form a monad \citep{Beck1969}. A distributive law is a natural transformation of type:
\begin{equation*}
  S \circ T \Rightarrow T \circ S
\end{equation*}
satisfying four equations described in \refsec~\ref{sec:distributive-laws}. This important idea has since been generalized to identify notions of a distributive law for combining monads with comonads, monads with pointed endofunctors, endofunctors with endofunctors and various other combinations.

General purpose techniques have been developed for constructing distributive laws~\citep{ManesMulry2007,ManesMulry2008, Bonsangue2013, Dahlqvist2017}. These general-purpose techniques are highly valuable, for as stated in~\citet{Bonsangue2013}: \emph{``It can be rather difficult to prove the defining axioms of a distributive law.''}. In fact, it can be so difficult that on occasion a distributive law has been published which later turned out to be incorrect; see \citet{KlinSalamanca2018} for an overview of such cases involving the powerset monad.

The literature has tended to focus on positive results, either demonstrating specific distributive laws, or developing general-purpose techniques for constructing them. By comparison, there is a relative paucity of negative results, showing when no distributive law can exist.
The most well-known result of this type appears in~\citet{VaraccaWinskel2006}, where it is shown that there is no distributive law combining the powerset and finite probability distribution monads, via a proof credited to Plotkin. This result was later strengthened by \citet{DahlqvistNeves2018}, who showed that the composite functor carries no monad structure at all. Recently, the same proof technique was used by~\citet{KlinSalamanca2018} to show that composing the covariant powerset functor  with itself yields an endofunctor that does not carry any monad structure, correcting an earlier error in the literature. To the best our knowledge, these are currently the only impossibility results in the literature.

In this paper we significantly extend the understanding of when no distributive law between two monads can exist. We restrict our attention to distributive laws between monads on the category of sets and functions, presenting several theorems for proving the non-existence of distributive laws for large classes of monads on this category. These results can roughly be divided into two classes:
\begin{itemize}
\item Firstly, we generalize Plotkin's method to a general-purpose theorem covering all the existing published no-go results about distributive laws, and yielding new results as well.
\item Secondly, we identify an entirely new approach, distinct from Plotkin's method. This novel technique leads to new general-purpose theorems, covering combinations of monads that were previously out of reach.
\end{itemize}
As one application of our second method, we show that the list monad cannot distribute over itself, resolving an open question in the literature~\citep{ManesMulry2007, ManesMulry2008}. In addition, the no-go theorems produced by the second method revealed yet another faulty distributive law in the literature, involving list and exception monads.

Monads have deep connections with universal algebra. We exploit this algebraic viewpoint by using an explicit description of the algebraic nature of distributive laws in~\citet{PirogStaton2017}, which was inspired by the work in \citet{Cheng2011}. Formulating our results in algebraic form is a key contribution of our work, simplifying and clarifying the essentials of our proofs, which can be obscured by more direct calculations.

In physics, theorems proving the impossibility of certain things are called \emph{no-go theorems}, because they clearly identify theoretical directions that cannot succeed. We follow this example, and hope that by sharing our results, we prevent others from wasting time on forlorn searches for distributive laws that cannot exist.

\subsection{Contributions}
We briefly outline our contribution.
\begin{itemize}
\item We demonstrate the non-existence of distributive laws for large classes of monads by taking an algebraic perspective, using a characterisation of distributive laws as described in~\citet{PirogStaton2017}:
\begin{itemize}
\item In \refthms~\ref{thm:plotkin-1} and~\ref{thm:plotkin-2} we widely generalize the essentials of a counterexample of Plotkin. Our general theorems reveal precisely which properties are key to the proof, hence capturing an infinite class of monads sharing these properties.
\item In \refsec~\ref{section:beyondPlotkin} we present three entirely new no-go theorems:
     \begin{itemize}
     \item Theorem~\ref{thm:nogoConstants} states when having more than one constant in a theory is problematic. The usefulness of this theorem is demonstrated by exhibiting a previously unidentified error in a theorem in the literature.
     \item Theorem~\ref{thm:nogoTreeList} is a no-go theorem for monads that do not satisfy the abides equation. This theorem negatively answers the open question of whether the list monad distributes over itself.
     \item Theorem~\ref{thm:nogoTreeIdem} is a no-go theorem focussing on the combination of idempotence and units. It proves there is no distributive law for the powerset monad over the multiset monad: $PM \Rightarrow MP$.
     \end{itemize}
\end{itemize}
\item Also in \refsec~\ref{section:beyondPlotkin}, \refthm~\ref{theorem} gives conditions under which at most one distributive law can exist.
\item Section~\ref{sec:comparison} compares the algebraic methods used in our proofs to the standard categorical approach which directly uses Beck's axioms for distributive laws.
\item Finally, we illustrate the scope of the no-go theorems in this paper by studying an extended version of the Boom hierarchy~\citep{Meertens1986}. We establish whether or not a distributive law exists for many combinations of monads within this hierarchy, greatly expanding what was previously known.
\end{itemize}
Throughout this paper we restrict our attention to monads on the category of sets and functions, as this is already an incredibly rich setting.

\section{Preliminaries}\label{sec:prelims}
We assume the reader has some familiarity with category theory, in particular the notions of category, functor, natural transformation and adjunction. We recommend~\citet{Awodey2010, BarrWells1990} and the standard reference~\citet{MacLane2013} as sources of background material in category theory. A good source for basic monad theory is~\citet{BarrWells1985}.

In this section, we review the definitions of monads and distributive laws, and briefly explain the connection between monads and universal algebra. We also introduce the algebraic notation we use in the rest of this paper.

As we will need several different types of compositions of functors and natural transformations, we first introduce some notation:
\begin{itemize}
\item Composition of functors $S$ and $T$ will be written as $T \circ S$.
\item Horizontal composition of natural transformations will also be written with $\circ$. In addition, compositions involving the identity natural transformation $1_T: T \Rightarrow T$ and natural transformation $\alpha$ will be written as $\alpha \circ T$ and $T \circ \alpha$ rather than $\alpha \circ 1_T$ and $1_T \circ \alpha$.
\item Vertical composition of two natural transformations $\alpha$ and $\alpha'$ will be written as ${\alpha' \cdot \alpha}$.
\end{itemize}
We will leave the symbols $\circ, \cdot$ out whenever this can be done without causing confusion.

\subsection{Monads}
\label{sec:monads}
We introduce monads in what is sometimes referred to as monoid form, see~\cite{Manes1976}, this is the standard approach used by category theorists, and is most convenient for later developments. This definition is equivalent to the definition commonly used by functional programmers, the connection between these two formulations can be found in~\citet{Manes1976}.

\begin{definition}[Monad]
For any category $\mathcal{C}$, a \define{monad} $\langle T,\eta, \mu \rangle$ on $\mathcal{C}$ consists of:
 \begin{itemize}
   \item An endofunctor $T: \mathcal{C} \rightarrow \mathcal{C}$.
   \item A natural transformation $\eta: 1 \Rightarrow T$ called the \emph{unit}.
   \item A natural transformation $\mu: TT \Rightarrow T$ called the \emph{multiplication}.
 \end{itemize}
 such that the following axioms hold:
\begin{align}
  \mu \cdot T\eta & = id \tag{unit 1}\\
  \mu \cdot \eta T & = id \tag{unit 2}\\
  \mu \cdot T\mu & = \mu \cdot \mu T\tag{associativity}
\end{align}
Or, as commuting diagrams:
\begin{center}
\begin{tikzcd}
TX \arrow[r, "\eta T"]\arrow[d, "T\eta"'] \arrow[rd, equal] & TTX \arrow[d, "\mu"] & & TTTX \arrow[r, "\mu T"]\arrow[d, "T \mu"'] & TTX \arrow[d, "\mu"] \\
TTX \arrow[r, "\mu"] & TX & & TTX \arrow[r, "\mu"] & TX
\end{tikzcd}
\end{center}
\end{definition}

All the monads in this paper will be monads on the category $\catset$ of sets and functions. We give a few examples, all of which will play important roles later on.

\newpage

\begin{example}[The Exception and Lift Monads]
  \label{ex:exception-monad}
  For any set $E$, the exception monad $\langle (-+E),\eta^E, \mu^E \rangle$ is given by:
  \begin{itemize}
  \item $(-+E): \catset \rightarrow \catset$ maps a set $X$ to the disjoint union $X+E$ of $X$ and $E$.
  \item $\eta^E_X: X \rightarrow X+E$ is the left inclusion morphism $i_l: X \rightarrow X+E$.
  \item $\mu^E_X: ((X+E)+E) \rightarrow (X+E)$ is the identity on $X$, and collapses the two copies $E$ down to a single copy.
  \end{itemize}
  When $E$ is a singleton set, this monad is also known as the maybe monad or the lift monad, written as $(-)_\bot$.
\end{example}
\begin{example}[The List Monad]
  \label{ex:list-monad}
  The list monad $\langle L,\eta^L, \mu^L \rangle$ is given by:
  \begin{itemize}
  \item $L: \catset \rightarrow \catset$ maps a set $X$ to the set containing all finite lists with elements taken from $X$.
  \item $\eta^L_X: X \rightarrow LX$ maps an element $x \in X$ to the singleton list $[x]$.
  \item $\mu^L_X: LLX \rightarrow LX$ concatenates a list of lists to form a single list.
    \[
    \mu^L_X ([[a,b],[c,d,e],[f]]) = [a,b]\concat[c,d,e]\concat[f] = [a,b,c,d,e,f]
    \]
  \end{itemize}
  For reasons that will become clear later on, this monad is also known as the monoid monad.
\end{example}
\begin{example}[The Powerset Monad]
  \label{ex:powerset-monad}
  The finite covariant powerset monad $\langle P,\eta^P, \mu^P \rangle$ is given by:
  \begin{itemize}
  \item $P: \catset \rightarrow \catset$ maps a set $X$ to the set containing all finite subsets of $X$.
  \item $\eta^P_X: X \rightarrow PX$ maps an element $x \in X$ to the singleton set $\{x\}$.
  \item $\mu^P_X: PPX \rightarrow PX$ takes a union of sets.
    \[
    \mu^P_X \{\{a,b,c\},\{b,c\},\{c,d,e\}\} = \{a,b,c\} \cup \{b,c\} \cup \{c,d,e\} = \{a,b,c,d,e\}
    \]
  \end{itemize}
\end{example}
\begin{example}[The Binary Tree Monad]
  \label{ex:tree-monad}
  The binary tree monad $\langle \treemonad,\eta^{\treemonad}, \mu^{\treemonad} \rangle$ is given by:
  \begin{itemize}
  \item $\treemonad: \catset \rightarrow \catset$ maps a set $X$ to the set containing all binary trees with leaves labelled by elements from $X$.
  \item $\eta^{\treemonad}_X: X \rightarrow \treemonad X$ maps an element $x \in X$ to the tree consisting of a single leaf labelled with $x$.
  \item $\mu^{\treemonad}_X: \treemonad\treemonad X \rightarrow \treemonad X$ flattens a tree of trees.
  \end{itemize}
\end{example}
\begin{example}[The Reader Monad]
  \label{ex:reader-monad}
  For any set of states~$R$, the reader monad~$\langle (-)^R,\eta^{R}, \mu^{R} \rangle$ is given by:
  \begin{itemize}
  \item $(-)^R: \catset \rightarrow \catset$ maps a set $X$ to the set of functions $X^R$ from $R$ to $X$.
  \item $\eta^{R}_X: X \rightarrow X^R$ maps an element $x \in X$ to the constant function $\const_x$.
  \item $\mu^{R}_X: (X^R)^R \rightarrow X^R$ works as follows: let $f: R \rightarrow X^R$ be an element of $(X^R)^R$. Then for any $r \in R$, $f(r)$ is a function from $R$ to $X$. So:
    \[
    \mu^R_X(f) = \lambda r \rightarrow (f(r))(r)
    \]
  \end{itemize}
\end{example}
For our final two examples, the following notion will be useful.
\begin{definition}
\label{def:finite-support}
A function with codomain the natural numbers or positive reals is said to have~\define{finite support} if only finitely many elements of~$X$ are mapped to non-zero values.
\end{definition}
\begin{example}[The Multiset and Abelian Group Monads]
  \label{ex:multiset-monad}
  A~\define{multiset} with elements in set~$X$ is a function from~$X$ to the natural numbers, mapping each element to its multiplicity. A~\define{finite multiset} is a multiset with finite support. As we will only be interested in finite multisets, we will abuse terminology and simply refer to them as multisets from this point. Multisets will be described using a set theoretic notation, for example the function:
  \begin{equation*}
    a \mapsto 1 \quad b \mapsto 2 \quad c \mapsto 0
  \end{equation*}
  will be written as:
  \begin{equation*}
    \{ a:1, b:2 \}
  \end{equation*}
  Elements with zero multiplicity may be omitted when the underlying set is clear from the context.

  The multiset monad~$\langle M,\eta^M, \mu^M \rangle$ is given by:
  \begin{itemize}
  \item $M: \catset \rightarrow \catset$ maps a set $X$ to the set containing all finite multisets with elements taken from $X$.
  \item $\eta^M_X: X \rightarrow MX$ maps an element $x \in X$ to the singleton $\{x:1\}$.
  \item $\mu^M_X: MMX \rightarrow MX$ takes a union of multisets, carefully accounting for all multiplicities.
    \[
    \mu^M_X \{\{a:1, b:2\}:1, \{b:1\}:3\} = \{a:(1 \cdot 1), b:(1\cdot2 + 3\cdot1)\} = \{a:1, b:5\}
    \]
    We can generalize the notion of multiset to take multiplicities in the integers. This results in another monad, the~\emph{Abelian group monad}.
  \end{itemize}
\end{example}
\begin{example}[The Distribution Monad]
  \label{ex:distribution-monad}
  The finite probability distribution monad $\langle D,\eta^{D}, \mu^{D} \rangle$ is given by:
  \begin{itemize}
  \item $D: \catset \rightarrow \catset$ maps a set $X$ to the set containing all finite probability distributions over $X$. That is, the set of all functions $d: X \rightarrow [0,1]$ which have finite support and:
       \[\sum_{x \in \supp(d)} d(x) = 1\]
  \item $\eta^{D}_X: X \rightarrow DX$ maps an element $x \in X$ to the distribution $\delta_x$, where:
    \[
    \delta_x (y) = \begin{cases}
      1, & \mbox{if } y = x \\
      0, & \mbox{otherwise}.
    \end{cases}
    \]
  \item $\mu^{D}: DDX \rightarrow DX$ takes a weighted average. If $e: DX \rightarrow [0,1]$ is in $DDX$, then:
    \[
    \mu^D_X(e) = \lambda x \rightarrow \sum_{d \in \supp(e)} e(d)d(x)
    \]
  \end{itemize}
\end{example}

\newpage
\subsection{Distributive Laws}
\label{sec:distributive-laws}
Distributive laws are particular natural transformations, introduced by~\citet{Beck1969}, to facilitate monad compositions. We begin with the formal definition.

\begin{definition}[Distributive Law]\label{def:distlaw}
Given monads $S$ and $T$, a \define{distributive law} for monad composition $TS$ is a natural transformation $\lambda: ST \Rightarrow TS$ satisfying the following axioms:
\begin{align}
  \lambda \cdot \eta^S T & = T\eta^S \tag{unit1}\\
  \lambda \cdot S\eta^T  & = \eta^T S \tag{unit2} \\
  \lambda \cdot \mu^S T & = T\mu^S \cdot \lambda S \cdot S\lambda \tag{multiplication1} \\
  \lambda \cdot S\mu^T  & = \mu^T S \cdot T\lambda \cdot \lambda T \tag{multiplication2}
\end{align}
Or, as commuting diagrams:
\begin{center}
\begin{tikzcd}\label{monad_squared_eq}
& T \arrow[dl, "\eta^S T"']\arrow[dr, "T \eta^S"] & & SST \arrow[d, "\mu^S T"'] \arrow[r, "S \lambda"] & STS \arrow[r, "\lambda S"] & TSS \arrow[d, "T \mu^S"] \\
ST \arrow[rr, "\lambda"]& & TS & ST \arrow[rr,"\lambda"] & & TS \\
& S \arrow[dl, "S\eta^T"'] \arrow[dr, "\eta^T S"] & & STT \arrow[d, "S \mu^T"'] \arrow[r, "\lambda T"] & TST \arrow[r, "T\lambda"] & TTS \arrow[d, "\mu^T S"] \\
 ST \arrow[rr, "\lambda"] & & TS & ST \arrow[rr,"\lambda"] & & TS \\
\end{tikzcd}
\end{center}
\end{definition}
The axioms of distributive laws may appear somewhat cryptic on first examination. In fact they are entirely natural when viewed in the graphical language of string diagrams; the interested reader may wish to consult~\citet{HinzeMarsden2016} for further details.

\begin{remark}
For a pair of monads~$S, T$ the expression~``$T$ distributes over~$S$'' is often used. This phrasing is somewhat ambiguous and prone to errors. It could mean the underlying natural transformation has type~$TS \Rightarrow ST$ or~$ST \Rightarrow TS$, and conventions are applied inconsistently in the literature. We will therefore avoid this approach, and explicitly state the type of the natural transformation, for example~``there is a distributive law of type~$TS \Rightarrow ST$''.
\end{remark}
\begin{theorem}[\citet{Beck1969}]
  Let~$\mathcal{C}$ be a category, and~$\langle S, \eta^S, \mu^S \rangle$ and~$\langle T, \eta^T, \mu^T \rangle$ two monads on~$\mathcal{C}$. If~$\lambda: ST \Rightarrow TS$ is a distributive law, then~$TS$ carries a monad structure with:
  \begin{itemize}
  \item Unit given by~$\eta^T\eta^S$.
  \item Multiplication given by~$\mu^T\mu^S \cdot T\lambda S$
  \end{itemize}
\end{theorem}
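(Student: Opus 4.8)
The plan is to verify directly that the triple $\langle TS,\ \eta^T\eta^S,\ \mu^T\mu^S \cdot T\lambda S\rangle$ satisfies the three monad axioms of \refsec~\ref{sec:monads}, reusing the monad structures of $S$ and $T$ together with the four axioms of the distributive law $\lambda$. Throughout I would freely use the interchange law for horizontal composition, so that the candidate multiplication is read as the pasting $TSTS \xrightarrow{T\lambda S} TTSS \xrightarrow{\mu^T SS} TSS \xrightarrow{T\mu^S} TS$ (equivalently with $TT\mu^S$ applied first), and I would repeatedly invoke naturality of $\eta^S,\eta^T,\mu^S,\mu^T$ and of $\lambda$ to slide components past one another. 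A preliminary step is to confirm that $\mu^T\mu^S \cdot T\lambda S$ is genuinely natural, which is immediate since it is built from natural transformations.

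First I would dispatch the two unit laws, $\mu^U \cdot TS\eta^U = id$ and $\mu^U \cdot \eta^U TS = id$, where $\eta^U = \eta^T\eta^S$. The idea is to expand the inserted unit, push it inward by naturality until a factor of the form $\lambda \cdot S\eta^T$ or $\lambda \cdot \eta^S T$ is exposed, and then rewrite that factor using the two \emph{unit} axioms (unit1 and unit2) of $\lambda$. What remains are composites built purely from $\eta^S,\mu^S$ or from $\eta^T,\mu^T$, which collapse to the identity by the unit laws of the monads $S$ and $T$ separately. Each of these is a short, essentially mechanical diagram chase.

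The main work, and the step I expect to be the principal obstacle, is associativity: $\mu^U \cdot TS\mu^U = \mu^U \cdot \mu^U TS$. Fully expanding $\mu^U = \mu^T\mu^S\cdot T\lambda S$ on both sides produces two long pasting composites on $TSTSTS$, and the difficulty is almost entirely organizational, since there are many occurrences of $\lambda$, $\mu^S$, and $\mu^T$ to be commuted into alignment. The strategy is to use the \emph{multiplication} axioms of $\lambda$ to merge adjacent $S$- and $T$-layers in the correct order: multiplication1 turns a nested double occurrence of $\lambda$ into a single pass past a merged $S$-layer, and symmetrically multiplication2 handles the merged $T$-layer, with naturality of $\lambda$ used to move the spectator copies of $S$ and $T$ out of the way. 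The residual reassociations of the pure $S$- and $T$-multiplications, justified by associativity of $\mu^S$ and of $\mu^T$, then reconcile the two sides.

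Because this final diagram is large, I would either lay it out as a single commuting pasting diagram subdivided into regions, each region justified by exactly one axiom (a naturality square, one of the four $\lambda$-axioms, or an $S$- or $T$-monad law), or, more cleanly, carry out the argument in the graphical string-diagram calculus referenced after \refdef~\ref{def:distlaw}, where the distributive-law axioms become local rewrites and associativity becomes visually transparent. The real content of the theorem lives entirely in this associativity check; the unit laws and the naturality of $\mu^U$ are comparatively routine.
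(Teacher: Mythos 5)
Your proposal is correct: the unit laws follow exactly as you say from the two unit axioms of $\lambda$ plus the unit laws of $S$ and $T$, and the associativity check is the genuinely laborious part, handled by the two multiplication axioms of $\lambda$, naturality, and associativity of $\mu^S$ and $\mu^T$. The paper itself gives no proof of this statement---it is quoted from \citet{Beck1969}---and your direct verification is precisely Beck's original argument, so there is nothing to compare beyond noting that your outline matches the standard proof.
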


\begin{example}[The Ring Monad]
  \label{ex:ring-monad}
  The most famous example of a distributive law involves the list monad and the Abelian group monad, $\lambda: LA \Rightarrow  AL$~\citep{Beck1969}. It captures exactly the distributivity of multiplication over addition. Writing a list as a formal product and a multiset with integer values as a formal sum, the distributive law can be written as:
  \[
  \lambda \left( \prod_{i=0}^{n} \sum_{j_i = 0}^{m_i} x_{ij_i} \right) = \sum_{j_0 = 0}^{m_0} \cdots \sum_{j_n}^{m_n} \prod_{i=0}^{n} x_{ij_i}
  \]
  or more simply:
  \begin{equation}
  \label{timesoverplus}
  \lambda \left( a \cdot (b + c) \right) = (a \cdot b) + (a \cdot c)
  \end{equation}
  The resulting composite monad is the ring monad, which like the Abelian group monad is named after its algebras rather than its monad action.

  The term distributive law is motivated by the distributivity of multiplication over addition seen in this example, and in fact many other distributive laws exploit similar algebraic properties. Caution is needed though: the validity of an equation such as~\eqref{timesoverplus} does not automatically imply the existence of a distributive law.
\end{example}

\begin{example}[Multiset Monad Composed with Itself]\label{ex:multisetdistlaw}
  The multiset monad distributes over itself. The distributive law is best described as taking a Cartesian product. For a multiset of multisets $\mathcal{M}$, $\lambda(\mathcal{M})$ is computed as follows: take the Cartesian product of all the multisets in~$\mathcal{M}$. Then, for each tuple of this Cartesian product, gather the elements of the tuple into a multiset. The multiset of all thus created multisets is the result of $\lambda(\mathcal{M})$. So for example:
  \begin{align*}
  \lambda \left( \{ \{a:1, b:2\}:2\}\right) & = \lambda \left( \{ \{a,b,b\},\{a,b,b\}\}\right) \\
  & = \{\{a,a\},\{a,b\},\{a,b\},\\
  & \quad\;\;\;\,\{b,a\},\{b,b\},\{b,b\},\\
  & \quad\;\;\;\,\{b,a\},\{b,b\},\{b,b\}\}\\
  & = \{\{a:2\}:1,\{a:1,b:1\}:4, \{b:2\}:4 \}
  \end{align*}
  Notice that this is again similar to the distribution of multiplication over addition:
  \[
  \lambda \left(\{\{a:1\}:1,\{b:1,c:1\}:1\} \right) = \{\{a:1,b:1\}:1,\{a:1,c:1\}:1\}
  \]
\end{example}

Not all distributive laws resemble this distributivity of times over plus. Distributive laws need not even be unique. Manes and Mulry present three different distributive laws for the non-empty list monad over itself, none of which are like times over plus:

\begin{example}[Non-Empty List Monad Composed with Itself]
 There are at least three distributive laws for the non-empty list monad over itself:
 \begin{enumerate}[label={[\arabic*]}]
   \item The first distributive law, from \citet{ManesMulry2007}, is given by a syntactic manipulation. It is best illustrated by an example. Given the list of lists $[[a],[b,c,d],[e,f]]$, every comma `$,$' in between two elements is replaced by bracket-comma-bracket `$],[$', and every occurrence of `$],[$' in the list is replaced by a comma `$,$':
       \[
       \lambda ([[a],[b,c,d],[e,f]]) = [[a,b],[c],[d,e],[f]]
       \]
   \item The second distributive law for the non-empty list monad over itself, given in \citet{ManesMulry2008}, is recursively defined as:
   \begin{align*}
     \lambda ([[a_1,\ldots, a_n]]) & = [[a_1],\ldots,[a_n]] \\
     \lambda (\mathcal{L}_1\concat\mathcal{L}_2) & = [\fst(\lambda (\mathcal{L}_1)) \concat \fst(\lambda (\mathcal{L}_2))]
   \end{align*}
    So:
    \begin{align*}
    \lambda ([[a], [b, c], [d,e]]) & = [\fst(\lambda [[a]]) \concat \fst(\lambda [[b,c]]) \concat \fst(\lambda [[d,e]])] \\
    & = [\fst([[a]]) \concat \fst([[b],[c]]) \concat \fst([[d],[e]])] \\
    & = [[a, b, d]]
    \end{align*}
   \item The third distributive law for non-empty list over itself, also from \citet{ManesMulry2008}, is similar to the second, only now instead of consistently taking the first element of the list, the distributive law picks the last element.
 \end{enumerate}
\end{example}

\subsection{Algebraic view}
\label{sec:algebraic-view}
There is an intimate connection between monads and universal algebra~\citep{Lawvere1963, Linton1966, Manes1976}. This connection will be essential to the results in later sections, where we analyze monads and distributive laws from an algebraic perspective. To do this, we exploit the explicit algebraic formulation of distributive laws in terms of~\emph{composite theories}, described in~\citet{PirogStaton2017}. In this section we explain these connections, and establish the notation and vocabulary that we will use in subsequent sections.

\subsubsection{Algebraic Theories}
Firstly, we make precise what we mean by an algebraic theory.
\begin{definition}[Algebraic Theory]
An~\define{algebraic signature} is a set of operation symbols~$\Sigma$, each with an associated natural number referred to as its~\define{arity}. The set of~\define{$\Sigma$-terms} over a set~$X$ of variables is defined inductively as follows:
\begin{itemize}
\item Each element~$x \in X$ is a term.
\item If~$t_1,...,t_n$ are terms, and~$\sigma \in \Sigma$ has arity~$n$, then~$\sigma(t_1,...,t_n)$ is a term.
\end{itemize}
An~\define{algebraic theory}~$\bb{T}$ consists of:
\begin{itemize}
\item An algebraic signature~$\signatureT$.
\item A set of pairs of $\Sigma$-terms over~$X$, $\eqT$, referred to as the \define{equations} or~\define{axioms} of~$\bb{T}$. We will often write a pair~$(s,t) \in \eqT$ as the more readable~$s = t$ when convenient.
\end{itemize}
The following terminology and notational conventions will be used:
\begin{itemize}
\item Operations of arity~$n$, will be referred to as~\define{$n$-ary}. Operations of arity 1,2 and 3 will be referred to as~\define{unary, binary and ternary operations}. 0-ary terms will be referred to as~\define{constants}.
\item Variable contexts: If $Y \subseteq X$ is a set of variables and $t$ is a term, then we write $Y \vdash t$ to mean that the free variables in $t$ are a subset of $Y$. When we wish to make the algebraic theory of interest explicit, we will write $Y \vdash_{\mathbb{T}} t$ meaning~$t$ is a term of the algebraic theory~$\mathbb{T}$ in variable context~$Y$.
\item Set of free variables: If we need the precise set of free variables in a term $t$, we write $\var(t)$. In addition, $\#\var(t)$ denotes the cardinality of the set $\var(t)$.
\item Equality of terms: When two terms $t_1, t_2$ in $\bb{T}$ can be proved equal using equational logic and axioms of $\bb{T}$ as outlined in \reffig~\ref{fig:equational-logic}, we write $t_1 \theoryeq{T} t_2$. When we wish to be specific about which variables may appear in~$t_1$ and~$t_2$, we will write:
  \begin{equation*}
    X \vdash_{\mathbb{T}} t_1 = t_2 \qquad\text{ iff }\qquad X \vdash t_1 \;\text{ and }\; X \vdash t_2 \;\text{ and }\; t_1 \theoryeq{T} t_2
  \end{equation*}
\item Substitution: For a term in context $Y\vdash_\bb{T} t$ and partial function $f$ mapping variables in $Y$ to $\bb{T}$-terms, we will write $t[f]$ or $t[f(y)/y]$ for the corresponding substitution of terms for variables where $f$ is defined.
\item Extended infix and postfix notation: We will extend our notation in the natural way to include infix notation for binary function symbols with symbolic names, for example $a + b$, and postfix notation when it is standard to do so, for example to indicate a multiplicative inverse $a^{-1}$.
\end{itemize}
\end{definition}

\begin{figure}
\figrule
\begin{tabular}{llc}
Axiom: & \;\; &
\AxiomC{$(s,t) \in \eqT$}
\UnaryInfC{$s \theoryeq{T} t$}
\DisplayProof \\
& \;\; & \\
Reflexivity: & \;\; &
\AxiomC{}
\UnaryInfC{$ t \theoryeq{T} t$}
\DisplayProof \\
& \;\; & \\
Symmetry: & \;\; &
\Axiom$ t \fCenter{\ \theoryeq{T}\ } t'$
\UnaryInf$ t' \fCenter{\ \theoryeq{T}\ } t$
\DisplayProof \\
 & \;\; & \\
Transitivity: & \;\; &
\AxiomC{$t \theoryeq{T} t'$}
\AxiomC{$t' \theoryeq{T} t''$}
\BinaryInfC{$ t' \theoryeq{T} t''$}
\DisplayProof \\
 & \;\; & \\
For any $n$-ary operation symbol $\sigma$: & \;\; &
\AxiomC{$t_1 \theoryeq{T} t'_1, \ldots, t_n \theoryeq{T} t'_n$}
\UnaryInfC{$\sigma(t_1,\ldots,t_n) \theoryeq{T} \sigma(t'_1,\ldots,t'_n)$}
\DisplayProof \\
& \;\; & \\
For any substitution f: & \;\; &
\Axiom$ t \fCenter{\ \theoryeq{T}\ } t'$
\UnaryInf$ t[f] \fCenter{\ \theoryeq{T}\ } t'[f]$
\DisplayProof\\
& \;\; &
\end{tabular}

\caption{Inference rules of equational logic}\label{fig:eqlogic}
\figrule
\label{fig:equational-logic}
\end{figure}

We give a few examples of common algebraic theories that will be of particular interest to us.
\begin{example}[Pointed Sets]
  \label{ex:pointed-sets}
  The algebraic theory of pointed sets has a signature with one constant, and no equations.
\end{example}
\begin{example}[Monoids]
  \label{ex:monoids}
  The algebraic theory of monoids has a signature containing one constant $e$ and one binary operation $*$, satisfying the axioms:
  \begin{align}
    e * a & = a \tag{left unit} \\
    a * e & = a \tag{right unit} \\
    (a * b) * c & = a * (b * c) \tag{associativity}
  \end{align}
  The theory of \emph{commutative monoids} extends this theory with one further equation:
  \begin{equation}
    a * b = b * a \tag{commutativity}
  \end{equation}
  The theory of \emph{join semilattices} extends the theory of commutative monoids with the additional axiom:
  \begin{equation}
    a * a = a \tag{idempotence}
  \end{equation}
  These theories are part of a hierarchy of theories called the Boom hierarchy, which will be investigated in detail in \refsec~\ref{section:boom}.
\end{example}
\begin{example}[Abelian Groups]
  \label{ex:abelian-groups}
  The algebraic theory of Abelian groups has an additional operation beyond the signature of monoids. In addition to a constant $e$ and binary operation $*$, it has a unary operation $(\cdot)^{-1}$. The axioms are: left and right unit, associativity, commutativity, and:
  \begin{align}
    a^{-1} * a & = e \tag{left inverse} \\
    a * a^{-1} & = e \tag{right inverse}
  \end{align}
\end{example}

\subsubsection{Algebras and the Free / Forgetful Adjunction}
\label{sec:algebras-and-the-free-forgetful-adjunction}

We now describe the notion of algebra for an algebraic theory. The category of such algebras will lead us to an adjunction of free and forgetful functors, which we need to explain the connection between algebraic theories and monads.

\begin{definition}[$\Sigma$-algebra]
  For a signature~$\Sigma$, a~\define{$\Sigma$-algebra} consists of:
  \begin{itemize}
  \item An underlying set~$A$.
  \item For each~$n$-ary operation~$\sigma \in \signatureT$ a function~$\llbracket \sigma \rrbracket : A^n \rightarrow A$.
  \end{itemize}
  A~\define{homomorphism of~$\Sigma$-algebras} of type~$(A, \llbracket - \rrbracket^A) \rightarrow (B, \llbracket - \rrbracket^B)$ is a function~$h : A \rightarrow B$ such that for each~$n$-ary $\sigma \in \Sigma$:
  \begin{equation*}
    h(\llbracket \sigma \rrbracket^A(x_1,...,x_n)) = \llbracket \sigma \rrbracket^B(h(x_1),...,h(x_n))
  \end{equation*}
  For a fixed~$\Sigma$-algebra, every $\Sigma$-term $\{1, \ldots, n\} \vdash t$ induces a function~$\llbracket t \rrbracket: A^n \rightarrow A$, defined inductively as follows:
  \begin{itemize}
  \item For variable $1 \leq i \leq n$:
    \begin{equation*}
      \llbracket i \rrbracket(x_1,...,x_n) = x_i
    \end{equation*}
  \item For~$m$-ary operation~$\sigma \in \Sigma$:
    \begin{equation*}
      \llbracket \sigma(t_1,...,t_m) \rrbracket(x_1,...,x_n) = \llbracket \sigma \rrbracket(\llbracket t_1 \rrbracket(x_1,...,x_n),...,\llbracket t_m \rrbracket(x_1,...,x_n))
    \end{equation*}
  \end{itemize}
  An algebra \define{satisfies equation~$\{1, \ldots, n\} \vdash s = t$} if:
  \begin{equation*}
    \llbracket s \rrbracket = \llbracket t \rrbracket
  \end{equation*}
\end{definition}
\begin{definition}[$(\Sigma,E)$-algebra]
  For an algebraic theory~$(\Sigma,E)$, a~\define{$(\Sigma, E)$-algebra} is a~$\Sigma$-algebra that satisfies all the equations in~$E$. A~\define{$(\Sigma,E)$-algebra homomorphism} is just a~$\Sigma$-algebra homomorphism between~$(\Sigma,E)$-algebras. $(\Sigma,E)$-algebras and their homomorphisms form a category~$\alge{\Sigma}{E}$.
\end{definition}

\newpage
\begin{example}[Monoids on the Natural Numbers]
  The natural numbers with either addition and~$0$, or multiplication and~$1$, are algebras for the theory of monoids.
\end{example}

\begin{remark}\label{rem:morethanonetheory}
It is not uncommon to have two algebraic theories that give rise to isomorphic categories of algebras. For instance, consider the following algebraic theory, having a signature with exactly one $n$-ary operation $\phi_n$ for each $n \in \bb{N}$, and flattening equations saying:
\begin{equation*}
\phi_n(\phi_{m_1}(x^1_1,...,x^1_{m_1}),...,\phi_{m_n}(x^n_1,...,x^n_{m_n})) = \phi_{\sum_i m_i}(x^1_1,...,x^n_{m_n})
\end{equation*}
Then the category of algebras for this theory is isomorphic to the category of algebras for the theory of monoids.
\end{remark}

Given a set~$A$, there is a canonical way to construct a~$(\Sigma,E)$-algebra on it.
\begin{theorem}
  For an algebraic theory~$\bb{T}$, there is a left adjoint~$F^{\bb{T}}$ to the obvious forgetful functor~$U^{\bb{T}} : \alge{\signatureT}{\eqT} \rightarrow \catset$. The functor~$F^{\bb{T}}$ is defined as follows:
  \begin{itemize}
  \item For set~$A$, $F^{\bb{T}}(A)$ is the set of equivalence classes of~$\signatureT$-terms under provable equality in equational logic, as described in \reffig~\ref{fig:equational-logic}.
  \item The action on a function~$h : A \rightarrow B$ is defined inductively on representatives as follows:
    \begin{align*}
      F^{\bb{T}}(h)(a) &= h(a) \quad\text{ for } a \in A\\
      F^{\bb{T}}(h)(\sigma(t_1,...,t_n)) &= \sigma(F^{\bb{T}}(h)(t_1),...,F^{\bb{T}}(h)(t_n)) \quad\text{ for $n$-ary } \sigma \in \signatureT
    \end{align*}
  \end{itemize}
\end{theorem}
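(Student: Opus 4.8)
The plan is to establish the adjunction $F^{\bb{T}} \dashv U^{\bb{T}}$ by exhibiting a universal arrow, that is, a unit natural transformation $\eta : 1 \Rightarrow U^{\bb{T}} F^{\bb{T}}$ satisfying the expected universal property. This is the most direct route for a free construction and spares us from building the hom-set bijection by hand. Before touching the adjunction proper, I would first check that the data in the statement is well defined. For a set $A$, the underlying set of $F^{\bb{T}}(A)$ is the quotient of $\signatureT$-terms over $A$ (using elements of $A$ as variables) by provable equality $\theoryeq{T}$, equipped with operations defined on representatives by $\llbracket \sigma \rrbracket([t_1], \ldots, [t_n]) = [\sigma(t_1, \ldots, t_n)]$. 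Well-definedness of these operations is exactly the content of the congruence rule of \reffig~\ref{fig:equational-logic}, and the fact that $F^{\bb{T}}(A)$ genuinely lives in $\alge{\signatureT}{\eqT}$, rather than being a mere $\signatureT$-algebra, follows from the axiom and substitution rules: any $(s,t) \in \eqT$ yields $s[\rho] \theoryeq{T} t[\rho]$ for every substitution $\rho$, so the induced maps $\llbracket s \rrbracket$ and $\llbracket t \rrbracket$ agree on all equivalence classes.

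Next I would define the unit $\eta_A : A \to U^{\bb{T}} F^{\bb{T}}(A)$ by $\eta_A(a) = [a]$, sending an element to the class of the corresponding single-variable term. The heart of the argument is the universal property: given any $(\signatureT, \eqT)$-algebra $B$ and any function $f : A \to U^{\bb{T}}(B)$, there should be a unique homomorphism $\hat f : F^{\bb{T}}(A) \to B$ with $U^{\bb{T}}(\hat f) \circ \eta_A = f$. I would construct $\hat f$ on representatives by interpreting terms in $B$ under the valuation $f$:
\[
  \hat f([a]) = f(a), \qquad \hat f([\sigma(t_1, \ldots, t_n)]) = \llbracket \sigma \rrbracket^B(\hat f([t_1]), \ldots, \hat f([t_n])).
\]
By construction $\hat f$ is a homomorphism and satisfies $\hat f \circ \eta_A = f$. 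Uniqueness is a routine structural induction on terms: any homomorphism $g$ with $g \circ \eta_A = f$ must agree with $\hat f$ on variables, and then on compound terms by the homomorphism equation together with the induction hypothesis.

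The main obstacle is the well-definedness of $\hat f$ on equivalence classes, namely that term interpretation respects provable equality. I would isolate this as a soundness lemma, proved by induction on derivations in equational logic, checking each inference rule of \reffig~\ref{fig:equational-logic}: reflexivity, symmetry and transitivity are immediate; the congruence rule follows directly from the inductive clause defining $\hat f$; the axiom rule is precisely the point where I invoke that $B$ satisfies every equation of $\eqT$; and the substitution rule requires an auxiliary substitution lemma, stating that the interpretation of $t[\rho]$ coincides with the interpretation of $t$ under the reassigned valuation. Once soundness is in hand, the unit together with its universal property delivers the adjunction, and functoriality of $F^{\bb{T}}$ and naturality of $\eta$ follow from the same inductive bookkeeping, with $F^{\bb{T}}(h)$ arising as the unique extension of $\eta_B \circ h$ guaranteed by the universal property.
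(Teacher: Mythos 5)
Your proposal is correct. Note that the paper itself states this classical free-model theorem as background in its preliminaries and offers no proof at all, so there is nothing internal to compare against; your argument is the standard universal-algebra one, and it discharges the two genuine obligations properly: well-definedness of the quotient operations via the congruence rule, and satisfaction of $\eqT$ in $F^{\bb{T}}(A)$ via the axiom and substitution rules. The one step you gesture at but should make explicit is the small evaluation lemma that for a term $\{1,\ldots,n\} \vdash s$ one has $\llbracket s \rrbracket([u_1],\ldots,[u_n]) = [s[u_i/x_i]]$ in the quotient algebra, since this is what connects the semantic notion of satisfying an equation to the syntactic closure of $\theoryeq{T}$ under substitution; it is the same auxiliary substitution lemma you already invoke in the soundness induction, so your outline is complete once that is stated. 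Deriving $F^{\bb{T}}$ on morphisms as the unique extension of $\eta_B \circ h$, and checking it agrees with the inductive clauses in the statement, is also exactly right and gives functoriality and naturality for free.
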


\subsubsection{Algebraic Perspective on Monads}

We now have all the ingredients to describe monads algebraically.

\begin{definition}[Free model monad]
  For an algebraic theory~$\bb{T}$, the~\define{free model monad} induced by~$\bb{T}$ is the monad induced by the free/forgetful adjunction~$U^{\bb{T}} \circ F^{\bb{T}}$.
\end{definition}
It is well known that every finitary monad on the category of sets arises as a free model monad for some algebraic theory. In fact, every monad arises from a generalization of algebraic theories, if we allow infinite arities, see~\citet{Linton1966, Manes1976}. All the monads appearing in the present work are finitary, so we remain in the realm of conventional universal algebra. Note that although every finitary monad will arise from a choice of operations and equations~$(\Sigma,E)$, this choice is not unique, see \refrem~\ref{rem:morethanonetheory}.
\begin{definition}
  We will say that \define{monad~$T$ corresponds to theory~$\bb{T}$} or \define{has presentation/is presented by $\bb{T}$} if~$T$ is a free model monad induced by~$\bb{T}$. In general, a monad has more than one presentation. Even so, if a certain presentation is commonly used for a monad, we refer to it as \emph{the} theory corresponding to the monad.
\end{definition}

\newpage
\begin{example}
  \label{ex:monad-alg-connection}
  Many of the monads we have seen so far correspond to familiar algebraic theories. Some monads are even named in acknowledgment of this correspondence.
  \begin{enumerate}[label={[\arabic*]}]
  \item The lift monad of \refex~\ref{ex:exception-monad} corresponds to the theory of pointed sets described in \refex~\ref{ex:pointed-sets}.
  \item The list, multiset and finite powerset monads of \refexs~\ref{ex:list-monad}, \ref{ex:multiset-monad} and~\ref{ex:powerset-monad} correspond respectively to the theories of monoids, commutative monoids and join semilattices described in \refex~\ref{ex:monoids}.
  \item The Abelian group monad of \refex~\ref{ex:multiset-monad} corresponds to the theory of Abelian groups described in \refex~\ref{ex:abelian-groups}.
  \item The ring monad, which was constructed using a distributive law in \refex~\ref{ex:ring-monad}, corresponds to the theory of rings.
  \end{enumerate}
  Some monads have presentations that are less familiar:
  \begin{enumerate}[label={[\arabic*]}]
    \setcounter{enumi}{4}
  \item The algebraic theory corresponding to the exception monad has a signature containing a constant for each exception, and no axioms.
  \item The algebraic theory corresponding to the distribution monad of \refex~\ref{ex:distribution-monad} is the theory of convex, or barycentric, algebras \citep{Jacobs2010,Stone1949}. These can be described as follows. For each $p \in (0,1)$, the signature contains a binary operation $+^p$, and these satisfy the following axioms:
    \begin{align*}
      x +^p x & = x\\
      x +^p y &= y +^{1-p} x \\
      x +^p (y +^r z) &= (x +^{\frac{p}{p + (1-p)r}} y) +^{p +(1-p)r} z
    \end{align*}
    Note that convex algebras are often described using binary operations for~$p$ in the closed interval~$[0,1]$, the less redundant theory above is equivalent.
  \item The algebraic theory corresponding to the reader monad has a signature containing just an $n$-ary operation, where $n$ is the cardinality of the set of states $R$ used in the reader monad.
  For the case~$n = 2$ we introduce a binary operation~$*$, with the intuitive reading of~$x * y$ being ``if the state is~$1$ do~$x$ else do~$y$'', the extension to larger state spaces has an analogous formulation describing how to proceed conditional on the state that is read. The axioms in the binary case are:
  \begin{align*}
   x * x & = x \\
  (w * x) * (y * z) & = w * z
  \end{align*}
  The first axiom is idempotence and generalises easily to the general case. The second axiom generalises to taking a diagonal. The algebraic formulation of computational monads such as this one is described in~\citet{PlotkinPower2002}.
  \end{enumerate}
\end{example}

\subsubsection{Algebraic Perspective on Distributive Laws}
Given monads $S$ and $T$, a distributive law is used to compose the two monads to form a monad $TS$. Analogously, a composite theory is used to `compose' two algebraic theories $\bb{S}$ and $\bb{T}$ in such a way that each term in the composite theory can be uniquely written as a $\bb{T}$-term over $\bb{S}$-terms.

\begin{definition}[Composite Theories]
  \label{def:composite}
  Let $\bb{S}$ and $\bb{T}$ be two algebraic theories, and let $\bb{U}$ be an algebraic theory that contains both $\bb{S}$ and $\bb{T}$. Then $\bb{U}$ is a \emph{composite} of $\bb{S}$ and $\bb{T}$ if it satisfies the following two conditions:
  \begin{enumerate}
  \item \emph{Separation:} Every term $u$ in $\bb{U}$ is equal to a term $t$ in $\bb{T}$ built from terms $s_i$ in $\bb{S}$: $u \theoryeq{U} t[s_i/x_i]$.
  \item \emph{Essential uniqueness:} The separation of $\bb{U}$-terms into $\bb{T}$-terms of $\bb{S}$-terms is unique modulo $\bb{S}, \bb{T}$. That is, for terms $X \vdash_\bb{T} t$, $X'\vdash_{\bb{T}} t'$ and $Y_i \vdash_{\bb{S}} s_i$, $Y'_{i'} \vdash_\bb{S} s'_{i'}$ such that $t[s_i/x_i] \theoryeq{U} t'[s'_{i'}/x'_{i'}]$, there is a set of variables $Z$ and there are functions $\essuniqfunctionA: X \rightarrow Z$, $\essuniqfunctionB: X' \rightarrow Z$ satisfying:
      \begin{enumerate}
        \item $t[\essuniqfunctionA(x_i)/x_i] \theoryeq{T} t'[\essuniqfunctionB(x'_{i'})/x'_{i'}]$.
        \item For all $i,j$: $\essuniqfunctionA(x_i) = \essuniqfunctionA(x_j) \Leftrightarrow s_i \theoryeq{S} s_j$.
        \item For all $i',j'$: $\essuniqfunctionB(x'_{i'}) = \essuniqfunctionB(x'_{j'}) \Leftrightarrow s'_{i'} \theoryeq{S} s'_{j'}$.
        \item For all $i,j'$: $\essuniqfunctionA(x_i) = \essuniqfunctionB(x'_{j'}) \Leftrightarrow s_i \theoryeq{S} s'_{j'}$.
      \end{enumerate}
  \end{enumerate}
\end{definition}

\begin{theorem}[Pir\'og \& Staton]
  \label{thm:distlaw-vs-compositetheory}
  Let~$S$ and~$T$ be $\catset$-monads with corresponding algebraic theories~$\bb{S}$ and~$\bb{T}$. Every distributive law $ST \Rightarrow TS$ induces a monad that is the free model monad of a composite theory of~$\bb{S}$ and~$\bb{T}$.
\end{theorem}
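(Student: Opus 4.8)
The plan is to construct the composite theory $\bb{U}$ explicitly from $\lambda$, show that its free model monad is the monad that Beck's theorem builds on $TS$, and finally verify that $\bb{U}$ satisfies the separation and essential uniqueness conditions of \refdef~\ref{def:composite}. First I would fix presentations $(\signatureS, E^{\bb{S}})$ and $(\signatureT, \eqT)$ of $S$ and $T$, and define $\bb{U}$ to have signature the disjoint union $\signatureS \sqcup \signatureT$ and axioms $E^{\bb{U}} = E^{\bb{S}} \cup \eqT \cup E^\lambda$. Here $E^\lambda$ is a family of ``swapping'' equations read off from $\lambda$: I take as axioms all identities of the form $s(t_1,\dots,t_n) = w$, where $s$ is an $\bb{S}$-term applied to $\bb{T}$-terms $t_i$ and $w$ is the $\bb{T}$-term-of-$\bb{S}$-terms that $\lambda$ returns on that argument. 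Since $\lambda$ is natural and compatible with both multiplications and both units, its whole behaviour is determined by the subfamily in which a single $\signatureS$-operation is applied to single $\bb{T}$-operations, so a generating subset of $E^\lambda$ indexed by pairs of operations suffices. Intuitively, $E^\lambda$ records exactly how $\lambda$ pushes $\bb{S}$-operations inside $\bb{T}$-operations.

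Next I would show that the free model monad of $\bb{U}$ is $TS$. The key observation is that $TSX$ already carries a canonical $\bb{U}$-algebra structure: the $\signatureT$-operations act through the free $T$-algebra structure on the outer $T$-layer, namely via $\mu^T S$, while the $\signatureS$-operations act via $STSX \xrightarrow{\lambda S} TSSX \xrightarrow{T\mu^S} TSX$, using $\lambda$ to transport the new $\bb{S}$-layer across the $T$-layer. The unit and multiplication axioms of $\lambda$ are precisely what make these two actions jointly satisfy $E^\lambda$. I would then argue that $(TSX, \dots)$ so equipped is the free $\bb{U}$-algebra on $X$: the unique $\bb{U}$-homomorphism $F^{\bb{U}}(X) \to TSX$ extending $\eta^T\eta^S$ is surjective because every $\bb{U}$-term rewrites into $\bb{T}$-of-$\bb{S}$ form, and injective by the essential-uniqueness analysis below; matching units and multiplications then identifies the induced monad with $\mu^T\mu^S \cdot T\lambda S$. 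Conceptually this is just the fact that the Eilenberg--Moore category of $TS$ is the category of $\lambda$-bialgebras, which is in turn $\alge{\signatureS \sqcup \signatureT}{E^{\bb{U}}}$; since these identifications commute with the forgetful functors to $\catset$, the two monads coincide.

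Finally I would verify the two conditions of \refdef~\ref{def:composite}. Separation amounts to showing that orienting $E^\lambda$ as a rewrite system, pushing every $\signatureS$-operation inward past the $\signatureT$-operations, terminates and leaves a $\bb{T}$-term built from $\bb{S}$-terms; here naturality together with the two multiplication axioms guarantees that the rewriting is well-founded. Essential uniqueness is the assertion that this $\bb{T}$-of-$\bb{S}$ normal form is unique modulo $\bb{S}$ and $\bb{T}$, which is exactly the injectivity claim used above; the functions $\essuniqfunctionA, \essuniqfunctionB$ demanded by the definition are extracted from the common value that $\lambda$ assigns to the two separated presentations.

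I expect essential uniqueness to be the main obstacle. It is where all four distributive law axioms, together with naturality of $\lambda$, become indispensable: they are precisely the coherence conditions ensuring that the order in which $\bb{S}$-operations are transported across $\bb{T}$-operations is irrelevant, so that the normalisation computed by $\lambda$ is confluent and the separation into $\bb{T}$-of-$\bb{S}$ form is genuinely canonical. Everything else --- the construction of the signature and axioms, the termination of the rewriting, and the routine checks that $\eta^T\eta^S$ and $\mu^T\mu^S \cdot T\lambda S$ are matched by the free $\bb{U}$-algebra structure --- is bookkeeping once this coherence is established.
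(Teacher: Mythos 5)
You cannot be faulted for not matching the paper's proof here, because the paper has none: this theorem is imported wholesale from \citet{PirogStaton2017} (it is their Theorem~5, as the paper's later corollary makes explicit). Judged against that source, your overall architecture is the correct one and essentially reproduces it in outline: present $\bb{U}$ on the disjoint-union signature $\signatureS \sqcup \signatureT$ with axioms $E^{\bb{S}} \cup \eqT \cup E^\lambda$, equip $TSX$ with the $\signatureT$-action via $\mu^T S$ and the $\signatureS$-action via $T\mu^S \cdot \lambda S$, identify $TSX$ with the free $\bb{U}$-algebra on $X$, and read off separation and essential uniqueness from that identification. That skeleton is sound, and your placement of the distributive law axioms as what makes the two actions cohere on $TSX$ is accurate.

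Two of your verification steps, however, have genuine gaps. First, the termination claim: orienting $E^\lambda$ and rewriting modulo $E^{\bb{S}} \cup \eqT$ is not obviously well-founded, and neither naturality nor the multiplication axioms ``guarantee'' this --- rewriting modulo an arbitrary equational theory is precisely the setting where termination arguments fail. Fortunately you do not need it: separation follows by structural induction on a $\bb{U}$-term, applying one instance of $E^\lambda$ at the outermost $\signatureS$-operation and flattening the resulting doubled $\bb{S}$-layer --- provided $E^\lambda$ contains instances for all $\bb{T}$-terms, so your unproven reduction to a generating subfamily indexed by operation pairs should simply be dropped. Second, and more seriously, your treatment of essential uniqueness is circular as stated: you invoke injectivity of $F^{\bb{U}}(X) \rightarrow TSX$ to establish freeness, and then declare essential uniqueness to be ``exactly the injectivity claim used above.'' The non-circular route proves freeness of $TSX$ directly via the universal property --- any $\bb{U}$-algebra $A$ is in particular an $\bb{S}$-algebra and a $\bb{T}$-algebra, so a map $X \rightarrow A$ extends through $SX$ and then $T(SX)$, with the four distributive law axioms ensuring $TSX$ is a $\bb{U}$-algebra and the extension a homomorphism. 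Essential uniqueness then drops out of two-layer freeness with no confluence analysis at all: if $t[s_i/x_i] \theoryeq{U} t'[s'_{i'}/x'_{i'}]$, their denotations in $T(SX)$ agree; since $T(SX)$ is the free $\bb{T}$-algebra on the set $SX$, taking $Z = SX$, $\essuniqfunctionA(x_i) = [s_i]$ and $\essuniqfunctionB(x'_{i'}) = [s'_{i'}]$ yields all four conditions of \refdef~\ref{def:composite}, because equality of labels in $SX$ is precisely $\theoryeq{S}$-equality by freeness of $SX$. So the distributive law axioms enter as bialgebra coherence making $TSX$ a model, not as confluence of a normalisation procedure; your final paragraph misidentifies where the real work happens.
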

In particular, if there exists a distributive law $ST \Rightarrow TS$, then there exists an algebraic theory that is a composite of algebraic theories $\bb{S}$ and $\bb{T}$ corresponding to the monads $S$ and $T$. We will frequently use the contrapositive of this statement:
\begin{corollary}\label{cor:noComposite-noDistlaw}
  If there is no composite theory of algebraic theories~$\bb{S}$ and~$\bb{T}$, then there is no distributive law~$ST \Rightarrow TS$ for the corresponding free model monads.
\end{corollary}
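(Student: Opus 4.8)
The plan is to prove this simply by contraposition, directly invoking \refthm~\ref{thm:distlaw-vs-compositetheory}. The two statements are logically dual: the Pir\'og--Staton theorem asserts that the existence of a distributive law $ST \Rightarrow TS$ entails the existence of a composite theory of $\bb{S}$ and $\bb{T}$, whereas the corollary is precisely the contrapositive of this implication. So the entire content of the corollary is a repackaging of the theorem into the negative form that will be convenient for the no-go results in the sections that follow.

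Concretely, I would argue as follows. Suppose, towards a contradiction, that a distributive law $\lambda: ST \Rightarrow TS$ exists for the free model monads $S$ and $T$ of $\bb{S}$ and $\bb{T}$. By \refthm~\ref{thm:distlaw-vs-compositetheory}, $\lambda$ induces a monad that is the free model monad of \emph{some} composite theory $\bb{U}$ of $\bb{S}$ and $\bb{T}$. The mere existence of such a $\bb{U}$ then contradicts the standing hypothesis that no composite theory of $\bb{S}$ and $\bb{T}$ exists. Hence no distributive law $ST \Rightarrow TS$ can exist, which is exactly the claim.

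There is no genuine mathematical obstacle at this stage, since all the substantive work has already been carried out inside \refthm~\ref{thm:distlaw-vs-compositetheory}. The only point worth stating explicitly, and the closest thing to a subtlety, is that the conclusion of that theorem --- namely that the induced monad is the free model monad of a composite theory --- directly witnesses the existence of a composite theory of $\bb{S}$ and $\bb{T}$, which is precisely the object whose non-existence is assumed in the hypothesis. Once this identification is made, the contrapositive is immediate.
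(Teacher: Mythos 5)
Your proposal is correct and matches the paper exactly: the paper presents \refcor~\ref{cor:noComposite-noDistlaw} as nothing more than the contrapositive of \refthm~\ref{thm:distlaw-vs-compositetheory}, which is precisely your argument (the contradiction framing is just a cosmetic variant of taking the contrapositive). Your observation that the theorem's conclusion directly witnesses the composite theory whose non-existence is hypothesised is the same identification the paper makes implicitly, so there is nothing to add.
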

The correspondence between distributive laws and composite theories is particularly useful because the axioms of a distributive law are difficult to prove. Depending on the definition of a particular distributive law, applying it to prove the multiplication axioms could require various separate case distinctions and difficult manipulations of one data structure within another. On the algebraic side, the manipulation of terms is restricted to equational logic. This greatly simplifies the proofs.

\section{General Plotkin Theorems}\label{Plotkin}
In this section we develop an algebraic generalization of the counterexample attributed to Gordon Plotkin in~\citet{VaraccaWinskel2006}. This counterexample showed that there is no distributive law of type:
\begin{equation*}
  D \circ P \Rightarrow P \circ D
\end{equation*}
where $D$ is the distribution monad of \refex~\ref{ex:distribution-monad} and~$P$ is the finite powerset monad of \refex~\ref{ex:powerset-monad}. In fact the counterexample is actually shown for what is known as the free real cone or finite valuation monad, as this requires slightly weaker assumptions. We state it here for the distribution monad simply to avoid the distraction of introducing yet another monad. This is essentially a cosmetic decision, and our results are equally applicable to the original counterexample for the free real cone monad.

Our aim in this section is to extract general methods for showing no-go results for constructing distributive laws, derived from the essentials of why this counterexample works.
In order to do this, we isolate sufficient conditions on algebraic theories inducing two monads, such that there can be no distributive law between them. Earlier generalizations of this counterexample have appeared in~\citet{KlinSalamanca2018, DahlqvistNeves2018}. All the existing approaches involve direct calculations involving the distributive law axioms, leading to somewhat opaque conditions. They also remain limited to the case where one of the two monads is the powerset monad, restricting their scope of application.

\subsection{The Original Counterexample}
\label{sec:plotkin-classic}
This section presents a very slight rephrasing of Plotkin's original counterexample, as presented in~\citet{VaraccaWinskel2006}, adjusted to the case of the powerset and distribution monads used in this paper. We will augment the proof with commentary aimed at providing motivation for the main proof ideas. Hopefully this will provide a useful stepping stone towards understanding the general theorems of \refsec~\ref{sec:general-theorems}.

\begin{counter}[Probability does not distribute over non-determinism]
  Assume, for contradiction, that there is a distributive law of type~$\lambda : DP \Rightarrow PD$. Fix the set~$X = \{ a, b, c, d \}$, and consider the element~$\plotkinspecial \in DP(X)$ defined by:
  \begin{equation*}
    \plotkinspecial = \{ a, b \} +^{\frac{1}{2}} \{c, d \}
  \end{equation*}
  We introduce three functions on our underlying set~$X$:
  \begin{trivlist}\item
    \begin{minipage}{0.3\textwidth}
      \begin{align*}
        f_1 : X &\rightarrow X\\
        a &\mapsto a\\
        b &\mapsto b\\
        c &\mapsto a\\
        d &\mapsto b
      \end{align*}
    \end{minipage}
    \begin{minipage}{0.3\textwidth}
      \begin{align*}
        f_2 : X &\rightarrow X\\
        a &\mapsto a\\
        b &\mapsto b\\
        c &\mapsto b\\
        d &\mapsto a
      \end{align*}
    \end{minipage}
    \begin{minipage}{0.3\textwidth}
      \begin{align*}
        f_3 : X &\rightarrow X\\
        a &\mapsto a\\
        b &\mapsto a\\
        c &\mapsto c\\
        d &\mapsto c
      \end{align*}
    \end{minipage}
  \end{trivlist}
  The plan of the proof is to analyze how~$\plotkinspecial$ travels around the naturality square for~$\lambda$, for each of our three functions. The element~$\plotkinspecial$ and the three functions have been carefully chosen so that the distributive law unit axioms can be applied during the proof.
  \begin{equation}
    \label{eq:plotkin-key}
    \begin{gathered}
      \begin{tikzpicture}[scale=0.5, node distance=2cm, ->]
        \node (tl) {$DP(X)$};
        \node[below of=tl] (bl) {$DP(X)$};
        \node[right of=tl] (tr) {$PD(X)$};
        \node[right of=bl] (br) {$PD(X)$};
        \draw (tl) to node[above]{$\lambda_X$} (tr);
        \draw (bl) to node[below]{$\lambda_X$} (br);
        \draw (tl) to node[left]{$DP(f_i)$} (bl);
        \draw (tr) to node[right]{$PD(f_i)$} (br);
      \end{tikzpicture}
    \end{gathered}
  \end{equation}
  We proceed as follows:
  \begin{itemize}
  \item Trace~$\plotkinspecial$ around the naturality square~\eqref{eq:plotkin-key} for both~$f_1$ and~$f_2$. We note that as~$\{-,-\}$ is commutative, and~$+^{\frac{1}{2}}$ is idempotent:
    \begin{equation*}
      DP(f_1)(\plotkinspecial) = \eta^D_{PX}\{ a, b \} = DP(f_2)(\plotkinspecial)
    \end{equation*}
    Idempotence and commutativity will be important ideas for any general theorem. For both~$f_1$ and~$f_2$ we can apply one of the distributive law unit axioms to conclude that:
    \begin{equation*}
      \lambda_X \circ DP(f_1)(\plotkinspecial) = \{ \eta^D_X(a), \eta^D_X(b) \} = \lambda_X \circ DP(f_2)(\plotkinspecial)
    \end{equation*}
    Now a careful consideration of the actions~$PD(f_1)$ and~$PD(f_2)$ allows us to deduce that~$\lambda_X(\plotkinspecial)$ must be a subset of:
    \begin{equation*}
      \{ \eta^D_X(a), \eta^D_X(b), \eta^D_X(c), \eta^D_X(d) \}
    \end{equation*}
    This step is one of more awkward components to generalize. In principle it involves inverse images of equivalence classes of terms in one algebraic theory, with variables labelled by equivalence classes of terms in a second algebraic theory, and motivates our move to an explicitly algebraic formulation. We can see this whole step as establishing an upper bound on the set of variables appearing in~$\lambda_X(\plotkinspecial)$.
  \item We then trace~$\plotkinspecial$ around the naturality square~\eqref{eq:plotkin-key} for~$f_3$. In this case, we exploit the idempotence of the operation~$\{-, -\}$ to conclude:
    \begin{equation}
      \label{eq:plotkin-intermediate}
      DP(f_3)(\plotkinspecial) = \{a\} +^{\frac{1}{2}} \{ c \}
    \end{equation}
    This indicates idempotence is actually an important aspect of both monads for this argument to work. Equation~\eqref{eq:plotkin-intermediate} allows us to apply the second unit axiom for~$\lambda$ to conclude:
    \begin{equation*}
      \lambda_X \circ DP(f_3)(\plotkinspecial) = \{ a +^{\frac{1}{2}}  c \}
    \end{equation*}
    By considering the action of~$PD(f_3)$ as before, we conclude that~$\lambda_X(\plotkinspecial)$ must contain an element mapped onto~$a +^{\frac{1}{2}} c$ by~$PD(f_3)$, placing a lower bound on the set of variables that appear in~$\lambda_X(\plotkinspecial)$.
  \item The lower and upper bounds established in the previous two steps contradict each other, and so no distributive law of type~$DP \Rightarrow PD$ can exist.
  \end{itemize}
\end{counter}
In summary, the argument requires two components:
\begin{itemize}
\item Some operations satisfying certain algebraic equational properties such as idempotence and commutativity.
\item Some slightly more mysterious properties of our monads, making the ``inverse image'' parts of the argument work correctly.
\end{itemize}
The axioms in \refsec~\ref{sec:general-theorems} distill these imprecise intuitions into concrete axioms that can be used to yield general theorems.
\subsection{The General Theorems}
\label{sec:general-theorems}
We now move to our generalizations of the Plotkin counterexample outlined in \refsec~\ref{sec:plotkin-classic}, adopting many of the notational conventions of~\citet{VaraccaWinskel2006, PirogStaton2017} whenever possible to ease comparison with those papers. The composite theories as described in \refdef~\ref{def:composite} will be used heavily, as will the algebraic characterization of distributive laws of \refthm~\ref{thm:distlaw-vs-compositetheory}.

We introduce terminology for some special sets of terms in an algebraic theory. The theorems are stated in terms of these special sets, which in some cases can restrict the scope for which certain ``global'' conditions need to apply, broadening the range of applicability.
\begin{definition}[Universal Terms]
  For an algebraic theory, we say that a set of terms~$T$ is:
  \begin{itemize}
  \item \define{Universal} if every term is provably equal to a term in $T$.
  \item \define{Stable} if $T$ is closed under substitution of variables for variables.
  \end{itemize}
\end{definition}
\begin{example}
  Some examples of universal and stable sets:
  \begin{enumerate}[label={[\arabic*]}]
  \item For any theory, the set of all terms is a stable universal set.
  \item For the theory of real vector spaces, every term is equal to a term in which scaling by the zero element does not appear. Terms that do not contain the scale by zero operation are clearly also stable under variable renaming. Therefore the terms not containing the scale by zero operation are a stable universal set.
  \item In the theory of groups, every term is equal to a term in which no subterm and its inverse are ``adjacent''. This set is therefore universal. It is not stable, as variable renaming may introduce a subterm adjacent to its inverse.
  \end{enumerate}
\end{example}
\begin{remark}
  On first reading, it is probably easiest to consider \refthms~\ref{thm:plotkin-1} and~\ref{thm:plotkin-2} with the universal algebraic sets taken to be the set of all terms in a theory, as this is by far the most common case.
\end{remark}

Throughout this section, the variable labels for any algebraic theory will range over the natural numbers.
We will also write~$n$ for the set~$\{1,...,n\}$, so, for example:
\begin{equation*}
  2 \vdash t
\end{equation*}
means~$t$ is a term containing (at most) two variables.

We proceed in two steps. Theorem~\ref{thm:plotkin-1} is an algebraic generalization of Plotkin's counterexample, capturing the algebraic properties required of both theories in order for a proof of this type to work. In \refthm~\ref{thm:plotkin-2} we generalize further, removing the restriction to binary terms that was sufficient for the original application. This generalization complicates the proof slightly, and so to clarify the methods involved we present two separate \refthms.

\begin{theorem}
  \label{thm:plotkin-1}
  Let~$\mathbb{P}$ and~$\mathbb{V}$ be two algebraic theories, $T_{\mathbb{P}}$ a stable universal set of ~$\mathbb{P}$-terms, and~$T_{\mathbb{V}}$ a stable universal set of $\mathbb{V}$-terms. If there are terms:
  \begin{equation*}
    2 \vdash_{\mathbb{P}} p \qquad\text{ and }\qquad 2 \vdash_{\mathbb{V}} v
  \end{equation*}
  such that:
  \begin{enumerate}[label=(P\arabic*)]
  \item \label{ax:pcomm} $p$ is commutative:
    \begin{equation*}
      2 \vdash p(1,2) = p(2,1)
    \end{equation*}
  \item \label{ax:pidem} $p$ is idempotent:
    \begin{equation*}
      1 \vdash p(1,1) = 1
    \end{equation*}
  \item \label{ax:pprolif} For all~$p' \in T_{\mathbb{P}}$:
    \begin{equation*}
      n \vdash p(1,2) = p' \quad\Rightarrow\quad 2 \vdash p'
    \end{equation*}
  \end{enumerate}
  \begin{enumerate}[label=(V\arabic*)]
  \item \label{ax:videm} $v$ is idempotent:
    \begin{equation*}
      1 \vdash v(1,1) = 1
    \end{equation*}
  \item \label{ax:vvar} For all~$v' \in T_{\mathbb{V}}$, and~$x$ a variable:
    \begin{equation*}
      \Gamma \vdash_{\mathbb{V}} x = v' \quad\Rightarrow\quad \{x\} \vdash v'
    \end{equation*}
  \item \label{ax:vbinary} For all~$v' \in T_{\mathbb{V}}$:
    \begin{equation*}
      \Gamma \vdash v(1,2) = v' \quad\Rightarrow\quad \neg (\{ 1 \} \vdash v'\vee \{ 2 \} \vdash v')
    \end{equation*}
  \end{enumerate}
  Then there is no composite theory of~$\mathbb{P}$ and~$\mathbb{V}$.
\end{theorem}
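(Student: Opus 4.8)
The plan is to assume for contradiction that a composite theory~$\mathbb{U}$ exists, and to replay Plotkin's three-function argument entirely through substitutions and normal forms. The engine is \refdef~\ref{def:composite}: by separation, every $\mathbb{U}$-term rewrites to an outer $\mathbb{P}$-term built from inner $\mathbb{V}$-terms, and since~$T_{\mathbb{P}}$ and~$T_{\mathbb{V}}$ are universal I may take the outer term in~$T_{\mathbb{P}}$ and the inner terms in~$T_{\mathbb{V}}$, while stability keeps these sets closed under the variable renamings used below. The three functions of the counterexample become the substitutions on~$\{1,2,3,4\}$ given by $f_1 = [3 \mapsto 1, 4 \mapsto 2]$, $f_2 = [3 \mapsto 2, 4 \mapsto 1]$ and $f_3 = [2 \mapsto 1, 4 \mapsto 3]$, each fixing the remaining variables, and the analogue of~$\plotkinspecial$ is the term
\[
  \plotkinspecial = v(p(1,2),\, p(3,4)),
\]
whose normal form I write as $\plotkinspecial \theoryeq{U} P(V_1, \dots, V_k)$ with $P \in T_{\mathbb{P}}$ and each $V_j \in T_{\mathbb{V}}$.

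First I would establish an upper bound on the inner terms. Commutativity~\ref{ax:pcomm} and idempotence~\ref{ax:videm} give $\plotkinspecial[f_1] \theoryeq{U} p(1,2) \theoryeq{U} \plotkinspecial[f_2]$, so substituting into the normal form yields, for $i \in \{1,2\}$, an equality $P(V_1[f_i], \dots, V_k[f_i]) \theoryeq{U} p(1,2)$ between two $\mathbb{P}$-over-$\mathbb{V}$ terms, the right-hand side being~$p$ over the trivial inner terms~$1$ and~$2$. Applying essential uniqueness, condition~\ref{ax:pprolif} confines the outer term to two variable classes, and condition~\ref{ax:vvar} lets me read each matched inner term as using a single variable; hence every $V_j[f_1]$ and every $V_j[f_2]$ contains only one variable. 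Intersecting the preimage constraints coming from~$f_1$ and~$f_2$ then forces each~$V_j$ itself to contain only a single variable drawn from~$\{1,2,3,4\}$.

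Next I would establish a conflicting lower bound. Idempotence~\ref{ax:pidem} gives $\plotkinspecial[f_3] \theoryeq{U} v(1,3)$, hence $P(V_1[f_3], \dots, V_k[f_3]) \theoryeq{U} v(1,3)$, whose right-hand side is a single inner $\mathbb{V}$-term sitting under a trivial outer $\mathbb{P}$-term. Essential uniqueness now forces $V_j[f_3] \theoryeq{V} v(1,3)$ for some~$j$. But by the upper bound this~$V_j$ uses a single variable, so~$V_j[f_3]$ does too, and condition~\ref{ax:vbinary} states that $v(1,3)$ is provably equal to no single-variable term. This contradiction shows no composite theory of~$\mathbb{P}$ and~$\mathbb{V}$ exists, which by \refcor~\ref{cor:noComposite-noDistlaw} also rules out the distributive law.

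The main obstacle is the upper bound: turning the two equalities into the assertion that every inner term is a single variable. This is exactly the delicate ``inverse image'' reasoning flagged after the original counterexample, and it is where the explicitly algebraic formulation earns its keep. The care needed is in invoking essential uniqueness to align the outer $\mathbb{P}$-structure of the normal form against the two-variable term~$p(1,2)$ --- using~\ref{ax:pprolif} to stop that structure from spreading across more than two classes and~\ref{ax:vvar} to upgrade ``matched to a variable'' into ``uses only that variable'' --- and then in combining the constraints obtained from~$f_1$ and~$f_2$. By comparison the idempotence and commutativity rewrites and the final appeal to~\ref{ax:vbinary} are routine.
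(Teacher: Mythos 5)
Your proposal is correct and takes essentially the same route as the paper's proof: the same special term $v(p(1,2),p(3,4))$, the same three substitutions (your $f_3 = [2 \mapsto 1,\, 4 \mapsto 3]$ is in fact more consistent with the stated conclusion $v(1,3)$ than the paper's own definition of $f_3$), the same upper bound via essential uniqueness with \ref{ax:pprolif} and \ref{ax:vvar} intersected across the $f_1$- and $f_2$-constraints, and the same final contradiction via \ref{ax:vbinary}. The only detail you gloss is that the last appeal to essential uniqueness also needs consistency of $\mathbb{P}$ (noted parenthetically in the paper) to guarantee that some inner term is actually matched against $v(1,3)$.
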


\begin{remark}
Properties \ref{ax:pprolif}, \ref{ax:vvar}, and \ref{ax:vbinary} are constraints on the variables appearing in certain terms, which is needed for the ``inverse image'' part of the Plotkin argument to work. Property \ref{ax:pprolif} states that any term equal to the special binary term $p$ can have at most two free variables. Property \ref{ax:vvar} states that any term equal to a variable can only contain that variable, and property \ref{ax:vbinary} states that any term equal to the special binary term $v$ must have at least two free variables. Notice that the upper/lower bound principle from the argument is reflected in these conditions.
\end{remark}

\begin{proof}
  Assume for contradiction that a composite theory~$\mathbb{U}$ of~$\mathbb{P}$ and~$\mathbb{V}$ exists. Then as~$\mathbb{U}$ is composite, there exist~$X \vdash p'$ and~$4 \vdash v'_i$ such that:
  \begin{equation*}
    4 \vdash_{\mathbb{U}} v(p(1,2),p(3,4)) = p'[v'_i/x_i]
  \end{equation*}
  Without loss of generality, we may assume~$p' \in T_{\mathbb{P}}$ and~$v'_i \in T_{\mathbb{V}}$ by universality.  Define partial function~$f_1$ as follows:
  \begin{align*}
    f_1(1) &= f_1(3) = 1\\
    f_1(2) &= f_1(4) = 2
  \end{align*}
  then using assumption~\ref{ax:videm}:
  \begin{equation*}
    4 \vdash p(1,2) = p'[v'_i[f_1]/x_i]
  \end{equation*}
  As~$T_\mathbb{P}$ is stable, any variable renaming of~$p'$ is also in~$T_{\mathbb{P}}$. Therefore by essential uniqueness and assumption~\ref{ax:pprolif}:
  \begin{equation*}
    4 \vdash 1 = v'_i[f_1] \quad\vee\quad 4 \vdash 2 = v'_i[f_1]
  \end{equation*}
  Then using assumption~\ref{ax:vvar}, for all~$v'_i$:
  \begin{equation}
    \label{eq:condition1}
    \{1,3\} \vdash v'_i \quad\vee\quad \{2,4\} \vdash v'_i
  \end{equation}
  Define a second partial function~$f_2$ as follows:
  \begin{align*}
    f_2(1) &= f_2(4) = 1\\
    f_2(2) &= f_2(3) = 2
  \end{align*}
  By assumptions~\ref{ax:pcomm} and~\ref{ax:videm}:
  \begin{equation*}
    4 \vdash p(1,2) = p'[v'_i[f_2]/x_i]
  \end{equation*}
  Again, as~$T_\mathbb{P}$ is stable, any variable renaming of~$p'$ is also in~$T_{\mathbb{P}}$. Therefore by essential uniqueness and assumption~\ref{ax:pprolif}:
  \begin{equation}
    4 \vdash 1 = v'_i[f_2] \quad\vee\quad 4 \vdash 2 = v'_i[f_2]
  \end{equation}
  and so, using assumption~\ref{ax:vvar}, for all~$v'_i$:
  \begin{equation}
    \label{eq:condition2}
    \{1,4\} \vdash v'_i \qquad\vee\qquad \{2,3\} \vdash v'_i
  \end{equation}
  Combining conditions~\eqref{eq:condition1} and~\eqref{eq:condition2}, for all~$v'_i$:
  \begin{equation}
    \label{eq:condition3}
    \bigvee_{n \in 4} \{n\} \vdash v'_i
  \end{equation}
  Finally, we define a third partial function~$f_3$ as follows:
  \begin{align*}
    f_3(1) &= f_3(2) = 1\\
    f_3(3) &= f_3(4) = 2
  \end{align*}
  Using~\ref{ax:pidem}:
  \begin{equation*}
    4 \vdash v(1,3) = p'[v'_i[f_3] / x_i]
  \end{equation*}
  Using essential uniqueness (and consistency of~$\mathbb{P}$), there exists~$i$ such that:
  \begin{equation*}
    4 \vdash v(1,3) = v'_i[f_3]
  \end{equation*}
  As~$T_{\mathbb{V}}$ is stable, each~$v'[f_3]$ is a element of~$T_{\mathbb{V}}$, and so by~\ref{ax:vbinary} this contradicts \refeq~\eqref{eq:condition3}. Therefore the assumed composite theory cannot exist.
\end{proof}
Really the point of \refthm~\ref{thm:plotkin-1} is to describe algebraic conditions under which distributive laws will not exist. The following corollary makes this explicit:
\begin{corollary}
  If monads~$P$ and~$V$ have presentations~$\mathbb{P}$ and~$\mathbb{V}$ such that the conditions of \refthm~\ref{thm:plotkin-1} can be satisfied, then there is no distributive law:
  \begin{equation*}
    VP \Rightarrow PV
  \end{equation*}
\end{corollary}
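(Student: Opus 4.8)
The plan is to derive this as an essentially immediate consequence of \refthm~\ref{thm:plotkin-1} combined with the Pir\'og--Staton correspondence; almost all of the real content lives in \refthm~\ref{thm:plotkin-1} itself, so the task here is purely to chain two existing results and to check that the orientation of the composite theory matches the orientation of the distributive law.

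First I would feed the given data into \refthm~\ref{thm:plotkin-1}. By hypothesis the presentations $\mathbb{P}$ and $\mathbb{V}$ come equipped with stable universal sets $T_{\mathbb{P}}$, $T_{\mathbb{V}}$ and binary terms $2 \vdash_{\mathbb{P}} p$, $2 \vdash_{\mathbb{V}} v$ verifying \ref{ax:pcomm}--\ref{ax:pprolif} and \ref{ax:videm}--\ref{ax:vbinary}. The theorem then applies verbatim and yields its conclusion: there is no composite theory of $\mathbb{P}$ and $\mathbb{V}$.

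Next I would invoke the contrapositive of the Pir\'og--Staton theorem, recorded as \refcor~\ref{cor:noComposite-noDistlaw}. The one step deserving attention is lining up orientations. A distributive law of type $VP \Rightarrow PV$ is a law $ST \Rightarrow TS$ with $S = V$ and $T = P$, so by \refthm~\ref{thm:distlaw-vs-compositetheory} it would induce the free model monad of a composite theory whose canonical separations take the form $p'[v'_i/x_i]$, i.e. $\mathbb{P}$-terms built over $\mathbb{V}$-terms. This is exactly the shape of composite that the proof of \refthm~\ref{thm:plotkin-1} rules out (recall that proof separates $v(p(1,2),p(3,4))$ into such a $p'[v'_i/x_i]$ and derives a contradiction). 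Hence the non-existence established in the previous step is precisely the non-existence required to block a law of type $VP \Rightarrow PV$.

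Putting these together, I would argue by contradiction: if a distributive law $VP \Rightarrow PV$ existed, \refthm~\ref{thm:distlaw-vs-compositetheory} would produce a composite of $\mathbb{P}$ and $\mathbb{V}$, contradicting \refthm~\ref{thm:plotkin-1}; therefore no such distributive law exists. I expect no genuine obstacle here---the substance is all in \refthm~\ref{thm:plotkin-1}, and the only thing to be careful about is the bookkeeping of which theory sits on the outside, so that the composite manufactured by a $VP \Rightarrow PV$ law is the very one forbidden by the theorem, matching the orientation $DP \Rightarrow PD$ of Plotkin's original counterexample.
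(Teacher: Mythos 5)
Your proposal is correct and follows exactly the route the paper takes: the paper's proof is simply ``Immediate from \citet[Theorem 5]{PirogStaton2017}'', i.e.\ the chain \refthm~\ref{thm:plotkin-1} plus the contrapositive of the Pir\'og--Staton correspondence (\refcor~\ref{cor:noComposite-noDistlaw}). Your extra care with the orientation---checking that a law $VP \Rightarrow PV$ yields a composite whose separations are $\mathbb{P}$-terms over $\mathbb{V}$-terms, matching the separation $p'[v'_i/x_i]$ used in the proof of \refthm~\ref{thm:plotkin-1}---is exactly the bookkeeping the paper leaves implicit, and it is done correctly.
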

\begin{proof}
  Immediate from~\cite[Theorem 5]{PirogStaton2017}.
\end{proof}
\begin{example}[Powerset and Distribution Monads]
Consider the term $1 \vee 2$ in the theory of join semilattices, corresponding to the powerset monad, and the term $1 +^{\frac{1}{2}} 2$ in the theory of convex algebras, corresponding to the finite probability distribution monad. Since both of these terms are binary, commutative, and idempotent, \refthm~\ref{thm:plotkin-1} captures the following previously known results:
\begin{itemize}
  \item There is no distributive law $DP \Rightarrow PD$ \citep{VaraccaWinskel2006}.
  \item There is no distributive law $PP \Rightarrow PP$ \citep{KlinSalamanca2018}.
\end{itemize}
In addition, \refthm~\ref{thm:plotkin-1} yields the following new results, completing the picture:
\begin{itemize}
  \item There is no distributive law $PD \Rightarrow DP$.
  \item There is no distributive law $DD \Rightarrow DD$.
\end{itemize}
\end{example}
\begin{example}[Powerset and Distribution Monads again]
  We can also consider the distribution monad to be presented by binary operations~$+^p$ with~$p$ in the~\emph{closed} interval~$[0,1]$, and in fact this is the more common formulation. In this case, \refthm~\ref{thm:plotkin-1} can still be directly applied, without having to move to the more parsimonious presentation. We simply note that the terms not involving the operations~$+^1$ and~$+^0$ form a stable universal set satisfying the required axioms. The results discussed in the previous example can then be recovered using the conventional presentation of the distribution monad.
\end{example}
\begin{counter}[Reader Monad]
It is well known that the Reader monad distributes over itself. Looking at the presentation of the reader monad given in \refex~\ref{ex:monad-alg-connection}, we see that although it has idempotent terms, there is no commutative term and hence \refthm~\ref{thm:plotkin-1} does not apply.
\end{counter}
A natural question to ask with regard to \refthm~\ref{thm:plotkin-1} is whether the choice of~\emph{binary} terms for both~$p$ and~$v$ is necessary. We thank Prakash Panangaden for posing this question during an informal presentation of an earlier version of this work~\citep{Panangaden2018}. The answer is that we can generalize to terms
with any arities strictly greater than one. Before presenting the appropriate generalization of \refthm~\ref{thm:plotkin-1}, we require a small, but important, technical lemma.

In \refsec~\ref{sec:plotkin-classic} we described our intuition for the first part of Plotkin's proof as an attempt to establish an upper bound on the variables appearing in certain terms. In \refthm~\ref{thm:plotkin-2} we apply this tactic again. Well-chosen substitutions will give us a family of sets giving upper bounds on the set of variables that may appear in a certain special term. The crucial \reflem~\ref{lem:filter} exploits the structure of these sets to combine this family of upper bounds into a single, tighter bound.

\begin{lemma}
  \label{lem:filter}
  Let~$n,m$ be strictly positive natural numbers, and~$\sigma$ a fixed point free permutation of~$\{1,...,m\}$. For distinct variables~$a^j_i$, $1 \leq i \leq m$, $1 \leq j \leq n$, the sets:
  \begin{align*}
    &\{ a_{i_1}^1,a_{i_1}^2, a_{i_1}^3, \ldots, a_{i_1}^n \}\\
    &\{ a_{i_2}^1,a_{\sigma (i_2)}^2, a_{i_2}^3, \ldots, a_{i_2}^n \}\\
    &\qquad \quad \;\; \vdots\\
    &\{ a_{i_n}^1,a_{i_n}^2, a_{i_n}^3, \ldots, a_{\sigma (i_n)}^n \}
  \end{align*}
  have at most one common element. Here, each $i_k$ is an element of $\{i \mid 1 \leq i \leq m\}$, not necessarily unique.
\end{lemma}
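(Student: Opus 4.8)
The plan is to read the whole statement as a purely combinatorial fact about finite index sets, which is legitimate precisely because the $a^j_i$ are assumed distinct: I identify each variable $a^J_I$ with the pair $(J,I) \in \{1,\ldots,n\} \times \{1,\ldots,m\}$. For each row $k$ let $f_k : \{1,\ldots,n\} \to \{1,\ldots,m\}$ record the subscript appearing in the position with superscript $j$, so that the $k$-th displayed set is $\{a^j_{f_k(j)} : 1 \le j \le n\}$. Concretely $f_1(j) = i_1$ for every $j$, while for $k \ge 2$ we have $f_k(j) = i_k$ for $j \ne k$ and $f_k(k) = \sigma(i_k)$. Since within any single row the superscripts $1,\ldots,n$ each occur exactly once, a variable $a^J_I$ lies in row $k$ if and only if $I = f_k(J)$. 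The lemma thus reduces to showing that the system of equations $I = f_k(J)$, ranging over all $k$, has at most one solution $(J,I)$.

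First I would use the first row to pin down the subscript of any common element. If $a^J_I$ lies in every row then in particular $I = f_1(J) = i_1$, so every common element must carry subscript exactly $i_1$. Consequently, were there two distinct common elements, they could differ only in their superscripts: say $a^J_{i_1}$ and $a^{J'}_{i_1}$ with $J \ne J'$. As $J$ and $J'$ are distinct members of $\{1,\ldots,n\}$ they cannot both equal $1$, so after relabelling I may assume $J \ge 2$; the case $n = 1$ is vacuous, since then no two distinct superscripts exist.

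The contradiction then comes from examining the single row $J$, whose diagonal position $J$ carries subscript $\sigma(i_J)$ while every off-diagonal position carries $i_J$. Membership of $a^J_{i_1}$ in row $J$ is the diagonal constraint $i_1 = f_J(J) = \sigma(i_J)$, whereas membership of $a^{J'}_{i_1}$, which sits at an off-diagonal position because $J' \ne J$, is the constraint $i_1 = f_J(J') = i_J$. Combining the two gives $\sigma(i_J) = i_1 = i_J$, so $i_J$ is a fixed point of $\sigma$, contradicting the hypothesis that $\sigma$ is fixed point free. Hence at most one common element can exist.

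The only genuine idea in this argument is the choice of the right row, namely row $J$ for the superscript that exceeds $1$, so that one of the two candidate common elements is forced through the $\sigma$-twisted diagonal entry while the other passes through an untwisted entry; this is exactly the point at which fixed-point-freeness is used, and it is where I expect the main obstacle to lie. Everything else — the reduction to index pairs, the use of the first row to fix the common subscript, and the degenerate small-$n$ cases — is routine bookkeeping. The one mild subtlety I would state carefully is that at least one of the two superscripts must exceed $1$, since that is precisely what guarantees I can land in a genuinely $\sigma$-twisted row.
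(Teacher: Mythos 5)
Your proof is correct, and it takes a genuinely different route from the paper's. The paper argues by induction on $n$: the base case is trivial, and in the inductive step it compares the first two rows, splitting on whether $i_1 = \sigma(i_2)$. If $i_1 = \sigma(i_2)$, fixed-point-freeness gives $\sigma(i_2) \neq i_2$, hence $i_1 \neq i_2$, so rows $1$ and $2$ can meet only at their column-$2$ entry and the claim follows at once; otherwise the two rows disagree in column $2$, so no common element sits there, and deleting row $2$ and column $2$ leaves a configuration of the same shape one size smaller, to which the induction hypothesis applies. You instead argue directly: after encoding membership of $a^J_I$ in row $k$ as $I = f_k(J)$, row $1$ pins the subscript of any common element to $i_1$; two distinct common elements must then differ in superscript, one of them, say $J$, being at least $2$, and row $J$ simultaneously forces $i_1 = \sigma(i_J)$ through its twisted diagonal entry and $i_1 = i_J$ through an off-diagonal entry, making $i_J$ a fixed point of $\sigma$ --- a contradiction. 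Your version buys brevity and locality: no induction, no row-and-column deletion bookkeeping, and fixed-point-freeness is invoked exactly once at an explicitly identified spot, whereas the paper's induction distributes the same two mechanisms (a twisted column forcing agreement versus a disagreement allowing a column to be discarded) across a recursive descent. Nothing in the lemma's application, the filtering step in \refthm~\ref{thm:plotkin-2}, favours one formulation over the other. Your handling of the degenerate case $n = 1$ and the relabelling to ensure $J \geq 2$ are both sound, and your reduction to index pairs is legitimate precisely because the variables $a^j_i$ are assumed pairwise distinct, as you note.
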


\newpage
\begin{proof}
  We proceed by induction on~$n$. The base case $n = 1$ is trivially true. For $n = n' + 1$, we consider the first two rows of our table of sets. There are two cases.
  \begin{enumerate}
    \item If~$i_1 = \sigma(i_2)$, then the first two rows can only agree at their second element by the assumption that~$\sigma$ is fixed point free, and the claim follows.
    \item If~$i_1 \neq \sigma(i_2)$ then the first two rows disagree in the second column. Therefore the elements common to all the sets cannot appear in the second column.
      We then remove both row and column 2, and invoke the induction hypothesis for~$n = n'$.
  \end{enumerate}
\end{proof}

We now present our widest generalization of Plotkin's original counterexample.
\begin{theorem}
  \label{thm:plotkin-2}
  Let~$\mathbb{P}$ and~$\mathbb{V}$ be two algebraic theories, $T_{\mathbb{P}}$ a stable universal set of~$\mathbb{P}$-terms, and~$T_{\mathbb{V}}$ a stable universal set of $\mathbb{V}$-terms. If there are terms:
  \begin{equation*}
    m \vdash_{\mathbb{P}} p \qquad\text{ and }\qquad n \vdash_{\mathbb{V}} v
  \end{equation*}
  and a fixed point free permutation~$\sigma: m \rightarrow m$ such that:
  \begin{enumerate}[label=(P\arabic*)]
  \item \label{ax:pcomm-2} $p$ is stable under the permutation~$\sigma$:
    \begin{equation*}
      m \vdash p = p[\sigma]
    \end{equation*}
  \item \label{ax:pidem-2} $p$ is idempotent:
    \begin{equation*}
      1 \vdash p[1/i] = 1
    \end{equation*}
  \item \label{ax:pprolif-2} For all~$p' \in T_{\mathbb{P}}$:
    \begin{equation*}
      \Gamma \vdash p = p' \quad\Rightarrow\quad m \vdash p'
    \end{equation*}
  \end{enumerate}
  \begin{enumerate}[label=(V\arabic*)]
  \item \label{ax:videm-2} $v$ is idempotent:
    \begin{equation*}
      1 \vdash v[1/i] = 1
    \end{equation*}
  \item \label{ax:vvar-2} For all~$v' \in T_{\mathbb{V}}$, and~$x$ a variable:
    \begin{equation*}
      \Gamma \vdash_{\mathbb{V}} x = v' \quad\Rightarrow\quad x \vdash v'
    \end{equation*}
  \item \label{ax:vbinary-2} For all~$v' \in T_{\mathbb{V}}$:
    \begin{equation*}
      \Gamma \vdash v = v' \quad\Rightarrow\quad \neg \left(\bigvee_{i \in \Gamma} \{ i \} \vdash v' \right)
    \end{equation*}
  \end{enumerate}
  Then there is no composite theory of~$\mathbb{P}$ and~$\mathbb{V}$.
\end{theorem}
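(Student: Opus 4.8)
The plan is to mirror the proof of \refthm~\ref{thm:plotkin-1}, replacing the two hand-crafted substitutions by a family of $n$ substitutions whose induced variable bounds are precisely the sets of \reflem~\ref{lem:filter}. Assume for contradiction that a composite theory $\mathbb{U}$ of $\mathbb{P}$ and $\mathbb{V}$ exists. Using disjoint blocks of distinct variables $a^j_i$ with $1 \le i \le m$ and $1 \le j \le n$, I would form the $\mathbb{U}$-term in which $v$ is applied to $n$ copies of $p$, one per row $j$. By separation there are $p'$ and $v'_i$, which by universality we may take in $T_{\mathbb{P}}$ and $T_{\mathbb{V}}$, with
\[
  \Gamma \vdash_{\mathbb{U}} v(p(a^1_1,\dots,a^1_m),\dots,p(a^n_1,\dots,a^n_m)) = p'[v'_i/x_i].
\]

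For the upper bound I would introduce substitutions $f_1,\dots,f_n$. The substitution $f_1$ aligns all rows directly, sending $a^j_i \mapsto b_i$ for every $j$; for $k \ge 2$, $f_k$ sends $a^j_i \mapsto b_i$ when $j \ne k$ and $a^k_i \mapsto b_{\sigma(i)}$, permuting only the $k$-th row. Using \ref{ax:pcomm-2} on the permuted row and \ref{ax:videm-2} to collapse the idempotent $v$, each $f_k$ reduces the whole term to the single $\mathbb{P}$-term $p(b_1,\dots,b_m)$, so that $\Gamma \vdash p(b_1,\dots,b_m) = p'[v'_i[f_k]/x_i]$. Since $T_{\mathbb{P}}$ is stable, essential uniqueness together with \ref{ax:pprolif-2} forces each $v'_i[f_k]$ to be provably equal to a single variable, and then \ref{ax:vvar-2} bounds $\var(v'_i)$ by the $f_k$-preimage of that variable. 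Computing these preimages yields exactly the sets of \reflem~\ref{lem:filter}, one set per $f_k$ (with $\sigma^{-1}$, which is also fixed point free, in place of $\sigma$); note that $f_1$ contributes the unpermuted row. Applying the lemma, the sets bounding $\var(v'_i)$ share at most one element, so $\# \var(v'_i) \le 1$ for every $i$.

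For the lower bound and the contradiction I would use the column-collapsing substitution $g$ sending $a^j_i \mapsto c_j$ for distinct variables $c_1,\dots,c_n$. By idempotence \ref{ax:pidem-2} each inner $p$-term collapses to $c_j$, so the whole term becomes $v(c_1,\dots,c_n)$, a renaming of $v$. Essential uniqueness, together with consistency of $\mathbb{P}$, then produces some $i$ with $v(c_1,\dots,c_n) \theoryeq{V} v'_i[g]$. But $v'_i[g]$ lies in $T_{\mathbb{V}}$ by stability and is supported on at most one variable, whereas \ref{ax:vbinary-2} states that no term provably equal to $v$ can be supported on a single variable. This contradiction shows the composite theory cannot exist, and the result follows.

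I expect the main obstacle to be the two essential-uniqueness steps — the same ``inverse image'' reasoning flagged as awkward in \refsec~\ref{sec:plotkin-classic} — namely passing from the collapsed equation $p(b_1,\dots,b_m) = p'[v'_i[f_k]/x_i]$ to the statement that each $v'_i[f_k]$ is a variable, and later extracting a single inner term equal to $v(c_1,\dots,c_n)$. Care is also needed in the bookkeeping so that the preimages $f_k^{-1}(-)$ line up precisely with the row pattern of \reflem~\ref{lem:filter}, since it is exactly this structure that the lemma exploits to sharpen $n$ separate upper bounds into the single-variable bound.
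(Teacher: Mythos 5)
Your proposal is correct and follows essentially the same route as the paper's own proof: the same grid of variables $a^j_i$, the same family of row-collapsing substitutions with exactly one $\sigma$-permuted row (the paper's $f_1,\dots,f_n$, yours differing only in mapping to fresh variables $b_i$, $c_j$ rather than reusing row variables), the same application of \reflem~\ref{lem:filter} with $\sigma^{-1}$ to intersect the upper bounds down to a single variable, and the same final column-collapse via \ref{ax:pidem-2} contradicting \ref{ax:vbinary-2}. The two essential-uniqueness steps you flag as delicate are handled in the paper at the same level of detail you describe, so no gap remains.
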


\begin{remark}
Again, properties \ref{ax:pprolif-2}, \ref{ax:vvar-2}, and \ref{ax:vbinary-2} are constraints on the variables appearing in certain terms. Property \ref{ax:pprolif-2} states that any term equal to the special term $p$ can have at most $m$ free variables, \refpro~\ref{ax:vvar-2} states that any term equal to a variable can only contain that variable, and property \ref{ax:vbinary-2} states that any term equal to the special term $v$ must have at least two free variables.
\end{remark}

\newpage
\begin{proof}
  Assume for contradiction that a composite theory~$\mathbb{U}$ of~$\mathbb{P}$ and~$\mathbb{V}$ exists. Let~$a_i^j$, $1 \leq i \leq m$, $1 \leq j \leq n$ denote distinct natural numbers.
  Then as~$\mathbb{U}$ is composite, there exist~$X \vdash p'$ and~$m * n \vdash v'_i$ such that:
  \begin{equation*}
    m * n \vdash_{\mathbb{U}} v(p(a_1^1, \ldots, a_m^1), \ldots, p(a_1^n, \ldots, a_m^n)) = p'[v'_l/x_l]
  \end{equation*}
  Without loss of generality, we may assume~$p' \in T_{\mathbb{P}}$ and~$v'_i \in T_{\mathbb{V}}$ by universality.

  Define substitution~$f_1$ as follows:
  \begin{equation*}
    f_1(a_i^j) = a_i^1
  \end{equation*}
  We then have:
  \begin{equation*}
    m * n \vdash_{\mathbb{U}} v(p(a_1^1, \ldots, a_m^1), \ldots, p(a_1^1, \ldots, a_m^1)) = p'[v'_l[f_1]/x_l]
  \end{equation*}
  By assumption~\ref{ax:videm-2}
  \begin{equation*}
    m * n \vdash_{\mathbb{U}} p(a_1^1, \ldots, a_m^1) = p'[v'_l[f_1]/x_l]
  \end{equation*}
  As~$T_\mathbb{P}$ is stable, any variable renaming of~$p'$ is also in~$T_{\mathbb{P}}$. Therefore by essential uniqueness and~\ref{ax:pprolif-2}:
  \begin{equation*}
    \forall l \; \exists i:\; m * n \vdash a_i^1 = v'_l[f_1]
  \end{equation*}
  By assumption~\ref{ax:vvar-2}:
  \begin{equation*}
  \forall l \; \exists i:\; \{a_i^1\} \vdash v'_l[f_1]
  \end{equation*}
  And so:
  \begin{equation}
    \label{eq:filter-1}
    \forall l \; \exists i: \; \{ a_i^1, \ldots, a_i^n \} \vdash v'_l
  \end{equation}
  Now we define a family of substitutions for~$2 \leq k \leq n$ as follows:
  \begin{equation*}
    f_k(a_i^j) =
    \begin{cases}
      a_{\sigma(i)}^k \text{ if } j = k\\
      a_i^k \text{ otherwise }
    \end{cases}
  \end{equation*}
  If we follow a similar argument as before, also exploiting assumption~\ref{ax:pcomm-2}, we conclude that:
  \begin{equation*}
  \forall l,k \; \exists i_k: \{a^k_{i_k}\} \vdash v'_l[f_k]
  \end{equation*}
  And so:
  \begin{equation}
    \label{eq:filter-2}
    \forall l,k \; \exists i_k: \; \{ a_{\sigma^{-1} (i_k)}^j \mid j = k \} \cup \{ a_{i_k}^j \mid j \neq k \} \vdash v'_l
  \end{equation}
  Then we note that by \reflem~\ref{lem:filter}, conditions~\eqref{eq:filter-1} and~\eqref{eq:filter-2} that:
  \begin{equation}
    \label{eq:filtered}
    \forall l \; \exists i,j: \; \{ a_i^j \} \vdash v'_l
  \end{equation}
  Define another substitution:
  \begin{equation*}
    f_{n+1}(a_i^j) = a_1^j
  \end{equation*}
  Applying this substitution:
  \begin{equation*}
    m * n \vdash v(p(a_1^1, \ldots, a_1^1), \ldots, p(a_1^n, \ldots, a_1^n)) = p'[v'_l[f_{n+1}] / x_l ]
  \end{equation*}
  Using assumption~\ref{ax:pidem-2}:
  \begin{equation*}
    m * n \vdash v(a_1^1, \ldots, a_1^n) = p'[v'_l[f_{n+1}] / x_l ]
  \end{equation*}
  By essential uniqueness and consistency:
  \begin{equation*}
    \exists l: \; m * n \vdash v(a_1^1, \ldots, a_1^n) = v'_l[f_{n+1}]
  \end{equation*}
  As~$T_{\mathbb{V}}$ is stable, each~$v'[f_{n+1}]$ is a element of~$T_{\mathbb{V}}$, and so by assumption~\ref{ax:vbinary-2}, $v'_l$ must contain at least two variables, but this contradicts conclusion~\eqref{eq:filtered}, and so the assumed composite theory cannot exist.
\end{proof}
It is clear that the simpler \refthm~\ref{thm:plotkin-1} is a special case of \refthm~\ref{thm:plotkin-2}. Beyond simply providing greater generality, the main point of \refthm~\ref{thm:plotkin-2} is that it clearly demonstrates that there is nothing special about binary terms. This further clarifies our understanding of what abstract properties make the original Plotkin counterexample work. By moving to such a high level of abstraction, it is also easier to see that our second method, described in \refsec~\ref{section:beyondPlotkin} is not simply a further generalization of Plotkin's counterexample, as it makes fundamentally different assumptions upon the underlying algebraic theories.

\section{No-Go Theorems beyond Plotkin}\label{section:beyondPlotkin}

So far, all negative results about monad compositions involve monads with an idempotent term in their corresponding algebraic theories. But the absence of an idempotent term does not guarantee that a distributive law exists. We will derive several new no-go theorems by turning our attention towards theories that have constants satisfying unit equations. In addition, we will see that some of the existing distributive laws are unique.

The motivation for this section is the question of whether the list monad distributes over itself, to which we give a negative answer. We will use this monad as a leading example throughout the section, although the material we present is far more general.

We will again use the algebraic method, starting from the assumption that there is a composite theory of two algebraic theories. The properties we require from our algebraic theories are comparable to the ones we needed in the generalisation of Plotkin's proof. Some properties will limit the variables that may appear in certain terms, while others require specific equations to hold in our theory. Of special interest is the \define{abides} equation:
\begin{equation}\label{eq:abides}
  (a * b) * (c * d) = (a * c) * (b * d)
\end{equation}
We will be looking at theories in which this equation does \emph{not} hold.

More specifically, if $\bb{S}$ and $\bb{T}$ are the algebraic theories of interest, then for $\bb{S}$ we will need various combinations of the following properties:

\begin{enumerate}[label=(S\arabic*)]
  \item \label{ax:svar0} For any two terms $s_1, s_2$: 
      \begin{equation*}
      \emptyset \vdash s_1 \; \wedge \; \Gamma \vdash s_1 = s_2 \quad\Rightarrow\quad \emptyset \vdash s_2
      \end{equation*}
  \item \label{ax:svar1} For any term $s$ and variable $x$: 
  \[
  \Gamma \vdash s = x \quad\Rightarrow\quad \{x\} \vdash s
  \]
  \item \label{ax:sunits} For every $n$-ary operation $\anyopS_n$ ($n \geq 1$) in the signature $\signatureS$, there is a constant $e_{\anyopS}$, which acts as a unit for $\anyopS_n$. If $f$ is the substitution $x_i \mapsto e_\anyopS$ for all but one $i \neq n$, then:
      \[
      \{x\} \vdash \anyopS_n[f] = x
      \]
  \item $\bb{S}$ has a binary term $\specialopS$ such that:
   \begin{enumerate}
     \item \label{ax:sbinary} $e_\specialopS$ is a unit for $\specialopS$:
      \[
      \{x\} \vdash \specialopS(x,e_\specialopS) = x = \specialopS(e_\specialopS,x)
      \]
     \item \label{ax:sidem} $\specialopS$ is idempotent:
     \[
     \{x\} \vdash \specialopS(x,x) = x
     \]
   \end{enumerate}
\end{enumerate}

The first two properties are constraints on the variables appearing in terms: terms equal to a term with no variables cannot have any variables themselves, while terms equal to a single variable must only contain that variable. The other properties require the units or idempotence equations to hold. Notice that property \ref{ax:sunits} implies that for any unary operation $\anyopS_1$ in signature $\signatureS$, we must have $\anyopS_1(s) \theoryeq{S} s$.

For $\bb{T}$, we will assume combinations of these properties:
\begin{enumerate}[label=(T\arabic*)]
  \item \label{ax:tvar0} For any two terms $t_1, t_2$: 
      \[
      \emptyset \vdash t_1 \; \wedge \; \Gamma \vdash t_1 = t_2 \quad\Rightarrow\quad \emptyset \vdash t_2
      \]
  \item \label{ax:tvar1} For any term $t$ and variable $y$: 
  \[
  \Gamma \vdash t = y \quad\Rightarrow\quad \{y\} \vdash t
  \]
  \item \label{ax:tconst} $\bb{T}$ has a constant $e_\specialopT$.
  \item $\bb{T}$ has a binary term $\specialopT$ such that:
    \begin{enumerate}
      \item\label{ax:tunit} $e_\specialopT$ is a unit for $\specialopT$:
      \[
      \{y\} \vdash \specialopT(y,e_\specialopT) = y = \specialopT(e_\specialopT,y)
      \]
      \item\label{ax:tspecialproperty} The abides equation does not hold in $\bb{T}$:
   \[
   \Gamma \vdash \specialopT(\specialopT(y_1,y_2),\specialopT(y_3,y_4)) = \specialopT(\specialopT(y_1,y_3),\specialopT(y_2,y_4)) \quad\Rightarrow\quad \#\Gamma \leq 3.
   \]
   \end{enumerate}
\end{enumerate}

Again the first two properties are constraints on the variables in terms, while the others concern equations.

\begin{example}\label{ex:STproperties}
  \begin{enumerate}[label={[\arabic*]}]
    \item \emph{List monad:} The algebraic theory of monoids, which corresponds to the list monad, has a binary operation $*$ and a constant $e$, satisfying the unit equations, see \refex~\ref{ex:monoids}. It is easy to check that this theory satisfies \ref{ax:svar0} and \ref{ax:svar1}. Since $e$ is indeed the unit for $*$, it also satisfies \ref{ax:sunits}, and the binary term for \ref{ax:sbinary} is easily found: $x * y$. The theory of monoids does not satisfy \ref{ax:sidem}, as none of the binary terms are idempotent.

        The only way for a monoid to satisfy $(x * y) * (z * w) = (x * z) * (y * w)$ is to have $y = z$, and so the theory satisfies all of \ref{ax:tvar0}, \ref{ax:tvar1}, \ref{ax:tconst}, \ref{ax:tunit}, and \ref{ax:tspecialproperty}.
    \item \emph{Powerset monad:} The algebraic theory of join semilattices, which corresponds to the powerset monad, describes commutative and idempotent monoids. Therefore, it satisfies \ref{ax:sidem} in addition to \ref{ax:svar0}, \ref{ax:svar1}, \ref{ax:sunits}, and \ref{ax:sbinary}. However, because of the additional commutativity, join semilattices satisfy abides:
        \begin{align*}
         (x * y) * (z * w) & = x * ((y * z) * w) \\
           & = x * ((z * y) * w) \\
           & = (x * z) * (y * w)
        \end{align*}
        Therefore, this theory does not have property \ref{ax:tspecialproperty}. Properties \ref{ax:tvar0}, \ref{ax:tvar1}, \ref{ax:tconst}, and \ref{ax:tunit} still hold.
    \item \emph{Exception monad:} The algebraic theory corresponding to the exception monad, from \refex~\ref{ex:monad-alg-connection}, satisfies \ref{ax:svar0}, \ref{ax:svar1}, and \ref{ax:sunits}. But since there are no binary terms, it does not satisfy \ref{ax:sbinary} and \ref{ax:sidem}. Similarly, it satisfies \ref{ax:tvar0}, \ref{ax:tvar1}, and \ref{ax:tconst}, but not \ref{ax:tunit} and \ref{ax:tspecialproperty}.
  \end{enumerate}
\end{example}

Given these properties, we will first consider when an $\bb{S}$-term of $\bb{T}$-terms is equal to the constant promised by \refpro~\ref{ax:tconst}, yielding \refthm~\ref{Prop1}. For the list monad, this means that if a distributive law would exist, it would have to map any list of lists containing the empty list to the empty list:
\[
\lambda [L_1,\ldots,L_n] = [] \text{ if there is an } i \text{ s.t. } L_i = []
\]
If the theory $\bb{T}$ has more than one constant, this immediately leads to a contradiction, and hence gives us the no-go theorem formulated in \refcor~\ref{thm:nogoConstants}.

Then building on \refthm~\ref{Prop1}, we derive conditions in which a distributive law has to behave like the distributivity of times over plus (\refeq~\eqref{timesoverplus}), resulting in \refthm~\ref{theorem}. Again this theorem holds for the list monad, so we see there can be at most one distributive law for the list monad over itself. It follows that if there is a distributive law between two monads satisfying the criteria of this theorem, then it is unique.

Finally, in \refthms~\ref{thm:nogoTreeList} and \ref{thm:nogoTreeIdem} we identify properties that together with \refthm~\ref{theorem} lead to a contradiction, giving us two more no-go theorems. The first property is a lack of the abides equation, which yields \refthm~\ref{thm:nogoTreeList}. From this theorem we conclude that there is no distributive law for the list monad over itself. Theorem \ref{thm:nogoTreeIdem} requires idempotence, and proves that there is no distributive law $PM \Rightarrow MP$ for the multiset monad $M$ over the powerset monad $P$.

\begin{remark}
The proofs in this section tend to be quite long. To keep the line of reasoning clear, we formulate intermediate results in lemmas and corollaries. The main results are then presented in either theorems or named corollaries.
\end{remark}

\subsection{Multiplicative Zeroes}\label{section:persistentconstants}

Our first focus is on properties \ref{ax:sunits} and \ref{ax:tconst}. The goal is to prove that in a composite of theories~$\bb{S}$ and~$\bb{T}$, the constant $e_\specialopT$ behaves like a multiplicative zero, consuming any $\bb{S}$-term it appears in. More precisely, we will show that any term made from an $\bb{S}$-term with the constant $e_\specialopT$ substituted into one its free variables, is equal to $e_\specialopT$. To get there we first prove the statement for any $\bb{S}$-term made of variables and a single operation symbol. Then we generalise using substitutions and induction.

\begin{lemma}\label{Lemma1}
  Let $\bb{S}$ be an algebraic theory with \refpro~\ref{ax:sunits}, and $\bb{T}$ an algebraic theory with properties \ref{ax:tvar0} and \ref{ax:tconst}. Finally, let $\bb{U}$ be a composite of $\bb{S}$ and $\bb{T}$. Then for every $n$-ary operation ($n \geq 1$) in the signature $\signatureS$ and any index $k \leq n$, the following equation holds in $\bb{U}$:
  \[
  \anyopS_n[e_\specialopT/x_k] \theoryeq{U} e_\specialopT
  \]
  where $e_\specialopT$ is the constant in $\bb{T}$ promised by \ref{ax:tconst}.
\end{lemma}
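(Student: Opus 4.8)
The plan is to use the separation half of \refdef~\ref{def:composite} to write the mixed term $\anyopS_n[e_\specialopT/x_k]$ as a $\bb{T}$-term built from $\bb{S}$-terms, and then to show that this outer $\bb{T}$-term is forced to be variable-free and in fact provably equal to $e_\specialopT$. Concretely, since $\bb{U}$ is a composite there are a $\bb{T}$-term $t$ on variables $z_1,\dots,z_r$ and $\bb{S}$-terms $s_1,\dots,s_r$ with $\anyopS_n[e_\specialopT/x_k] \theoryeq{U} t[s_i/z_i]$, so the whole task reduces to identifying $t$ with $e_\specialopT$.

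The key move is a single substitution $g$ sending every surviving variable $x_j$ (for $j \neq k$) to the $\bb{S}$-unit $e_{\anyopS}$ provided by \ref{ax:sunits}. On the left-hand side this leaves $e_\specialopT$ in position $k$ with $e_{\anyopS}$ in every other argument of $\anyopS_n$, so the unit law \ref{ax:sunits} (applied at position $k$) collapses it to $e_\specialopT$; on the right-hand side it produces $t[s_i[g]/z_i]$. Hence $e_\specialopT \theoryeq{U} t[s_i[g]/z_i]$. I would then feed this equation into the essential-uniqueness clause of \refdef~\ref{def:composite}, comparing it against the trivial separation of $e_\specialopT$ as a $\bb{T}$-term standing over no $\bb{S}$-terms. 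This yields a variable renaming $f$ with $t[f] \theoryeq{T} e_\specialopT$.

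The crux, and the step I expect to be most delicate, is converting this $\bb{T}$-equality into a purely syntactic fact about $t$. Because $e_\specialopT$ is variable-free, property \ref{ax:tvar0} forces $t[f]$ to be variable-free as well; and since $f$ merely renames variables it cannot delete any, so $t$ must already be variable-free. Once $t$ carries no variables, both substitutions $[s_i/z_i]$ and $[s_i[g]/z_i]$ act trivially, so reading off the collapsed equation gives $t \theoryeq{U} e_\specialopT$, and therefore $\anyopS_n[e_\specialopT/x_k] \theoryeq{U} t \theoryeq{U} e_\specialopT$, as required. The only real subtlety lies in the bookkeeping of essential uniqueness together with the observation that a variable-for-variable renaming preserves the property of having no free variables, which is exactly the content that \ref{ax:tvar0} supplies.
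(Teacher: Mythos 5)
Your proposal is correct and follows essentially the same route as the paper's proof: separate $\anyopS_n[e_\specialopT/x_k]$ as $t[s_i/z_i]$, substitute $e_{\anyopS}$ into all positions other than $k$ to collapse the left side to $e_\specialopT$ via \ref{ax:sunits}, invoke essential uniqueness against the trivial separation of $e_\specialopT$, and use \ref{ax:tvar0} (plus the observation that a variable renaming preserves variable-freeness) to force $t$ to be variable-free and hence equal to $e_\specialopT$. The only cosmetic difference is that the paper dispatches the unary case separately as trivial, while your argument covers it uniformly.
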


\begin{proof}
 The statement is trivial in the case that $\anyopS_n$ is a unary operation, so we may assume that $n \geq 2$.
 From the fact that $\bb{U}$ is a composite of the theories $\bb{S}$ and $\bb{T}$, we know that every term in $\bb{U}$ is equal to a $\bb{T}$- term built over $\bb{S}$-terms. And so, there is a $\bb{T}$-term $Y \vdash t$ and there are $\bb{S}$-terms $X_1 \vdash s_1,\ldots, X_m \vdash s_m$ such that:
\begin{align}\label{lambda_1}
\anyopS_n[e_\specialopT/x_k] \theoryeq{U} t[s_j/y_j]
\end{align}

Since we assume \refpro~\ref{ax:sunits}, we know $\anyopS_n$ has unit $e_\anyopS$. Consider the substitution $x_i~\mapsto~e_\anyopS$ for all $i \neq k$. Then:
\begin{align*}
& \anyopS_n[e_\specialopT/x_k] \theoryeq{U} t[s_j/y_j] \\
\Leftrightarrow\;\; & \reason{\text{Substitution:}} \\
& \anyopS_n[e_\specialopT/x_k, e_\anyopS/x_i] \theoryeq{U} t[s_j[e_\anyopS/x_i]/y_j] \\
\Leftrightarrow\;\; & \reason{e_\anyopS \text{is a unit for } \anyopS_n} \\
& e_\specialopT \theoryeq{U} t[s_j[e_\anyopS/x_i]/y_j]
\end{align*}
Because of essential uniqueness (see \refdef~\ref{def:composite}), we conclude that there are functions $\essuniqfunctionA: \emptyset \rightarrow Z, \essuniqfunctionB: Y \rightarrow Z$ such that:
\begin{align*}
& e_\specialopT \theoryeq{T} t[\essuniqfunctionB(y_j)/y_j] \\
\end{align*}
By assumption \ref{ax:tvar0} we may conclude that $t$ does not have any variables and hence we have $t \theoryeq{T} e_\specialopT$.
Going back to \refeq~\eqref{lambda_1}, we see that:
\begin{align*}
& \anyopS_n[e_\specialopT/x_k] \theoryeq{U} t[s_j/y_j] \\
\Leftrightarrow \;\;& \reason{ \var(t) = \emptyset } \\
& \anyopS_n[e_\specialopT/x_k] \theoryeq{U} t \\
\Leftrightarrow \;\; & \reason{ t = e_\specialopT } \\
& \anyopS_n[e_\specialopT/x_k] \theoryeq{U} e_\specialopT
\end{align*}
proving the lemma.
\end{proof}

This immediately leads to a slightly more general statement:

\begin{corollary}\label{Cor1}
   Let $\bb{S}$ and $\bb{T}$ be algebraic theories with properties \ref{ax:sunits}, and \ref{ax:tvar0} and \ref{ax:tconst} respectively, and let $e_\specialopT$ be the constant in $\bb{T}$ promised by \ref{ax:tconst}. Any composite theory $\bb{U}$ of $\bb{S}$ and $\bb{T}$ is such that for every $n$-ary operation $\phi_n$ ($n \geq 1$) in the signature $\signatureS$ and any terms $u_1, \ldots, u_n$ in $\bb{U}$, the following equation holds if there is a $k \leq n$ with $u_k = e_\specialopT$:
  \[
  \anyopS_n[u_i/x_i] \theoryeq{U} e_\specialopT
  \]
\end{corollary}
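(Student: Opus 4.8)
The plan is to derive this directly from \reflem~\ref{Lemma1} by a single application of the substitution rule of equational logic. \reflem~\ref{Lemma1} already establishes, for each $n$-ary operation $\anyopS_n$ in $\signatureS$ and each index $k \leq n$, the equation $\anyopS_n[e_\specialopT/x_k] \theoryeq{U} e_\specialopT$ in the composite theory $\bb{U}$. The essential observation is that the left-hand side of this equation still carries the remaining variables $x_i$ (for $i \neq k$) as \emph{free} variables, whereas the right-hand side $e_\specialopT$ is a closed term. The corollary is then nothing more than the closure of this lemma under substituting arbitrary $\bb{U}$-terms for those still-free variables.

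Concretely, I would first fix the operation $\anyopS_n$ and an index $k$ with $u_k = e_\specialopT$, and invoke \reflem~\ref{Lemma1} to obtain $\anyopS_n[e_\specialopT/x_k] \theoryeq{U} e_\specialopT$. Then I would apply the substitution inference rule from \reffig~\ref{fig:equational-logic} using the substitution $g$ that sends each remaining variable $x_i$ (for $i \neq k$) to the given term $u_i$. Since $e_\specialopT$ contains no variables, it is untouched by $g$, so the right-hand side remains $e_\specialopT$; the left-hand side becomes $\anyopS_n[u_i/x_i]$, where the $k$-th argument is $e_\specialopT = u_k$ and every other argument is the prescribed $u_i$. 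This yields exactly $\anyopS_n[u_i/x_i] \theoryeq{U} e_\specialopT$, as required.

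I expect no genuine obstacle here, since all the real work has been carried out in \reflem~\ref{Lemma1} and its hypotheses (\refpro~\ref{ax:sunits} for $\bb{S}$, and \ref{ax:tvar0} and \ref{ax:tconst} for $\bb{T}$) are inherited verbatim. The only point worth flagging is the role of the hypothesis $u_k = e_\specialopT$: it is used to identify the $k$-th argument after substitution. If one only assumed the weaker $u_k \theoryeq{U} e_\specialopT$, an extra appeal to congruence (the operation-symbol rule of \reffig~\ref{fig:equational-logic}) would first rewrite $u_k$ to $e_\specialopT$ inside $\anyopS_n$, after which the identical argument goes through.
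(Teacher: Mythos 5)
Your proof is correct and matches the paper exactly in spirit: the paper states \refcor~\ref{Cor1} with no proof at all, introducing it as a statement that ``immediately'' follows from \reflem~\ref{Lemma1}, and the intended justification is precisely your single application of the substitution rule of \reffig~\ref{fig:equational-logic}, using that $e_\specialopT$ is closed so the right-hand side is untouched. Your closing remark about the syntactic hypothesis $u_k = e_\specialopT$ versus the weaker $u_k \theoryeq{U} e_\specialopT$ is a correct and worthwhile clarification, but nothing more needs to be said.
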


We use induction to prove that the statement holds not just for operations in the signature, but for any $\bb{S}$-term that has $e_\specialopT$ substituted into one of its free variables.

\newpage
\begin{theorem}\label{Prop1}
  Given algebraic theories $\bb{S}$ and $\bb{T}$ with properties \ref{ax:sunits}, and \ref{ax:tvar0} and \ref{ax:tconst} respectively, let $\bb{U}$ be a composite of $\bb{S}$ and $\bb{T}$. Then for any $\bb{S}$-term $s$ with $n \geq 1$ free variables, and any terms $u_1, \ldots, u_n$ in $\bb{U}$: if there is a $k \leq n$ such that $u_k = e_\specialopT$, then:
  \[
  s[u_i/x_i] \theoryeq{U} e_\specialopT
  \]
\end{theorem}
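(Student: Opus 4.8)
The plan is to prove the statement by structural induction on the $\bb{S}$-term $s$, using \refcor~\ref{Cor1} as the engine that already disposes of the case of a single operation symbol. Intuitively, the claim says that the constant $e_\specialopT$ behaves as an absorbing (multiplicative zero) element: once it is substituted into one leaf of $s$, the collapse to $e_\specialopT$ propagates up the term tree from that leaf to the root. Structural induction is exactly the device that realises this propagation, and \refcor~\ref{Cor1} provides the single-step collapse at each internal node.

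For the base case, $s$ is a single variable, since a constant $\bb{S}$-term has no free variables and is excluded by the hypothesis $n \geq 1$. Then $s$ has exactly one free variable, which is the one set to $e_\specialopT$, so $s[u_i/x_i]$ is literally $e_\specialopT$ and the claim is immediate. For the inductive step, I write $s = \anyopS_m(t_1,\ldots,t_m)$ for some $m$-ary operation $\anyopS_m \in \signatureS$ and $\bb{S}$-subterms $t_1,\ldots,t_m$. The free variable $x_k$ with $u_k = e_\specialopT$ must occur in at least one immediate subterm, say $t_{j_0}$, precisely because $x_k \in \var(s)$. The free variables of $t_{j_0}$ form a subset of those of $s$ and include $x_k$, so $t_{j_0}$ has at least one free variable and the induction hypothesis applies to it under the restricted substitution, giving $t_{j_0}[u_i/x_i] \theoryeq{U} e_\specialopT$. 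Since $s[u_i/x_i] = \anyopS_m(t_1[u_i/x_i],\ldots,t_m[u_i/x_i])$, the congruence rule of equational logic (\reffig~\ref{fig:equational-logic}) lets me replace the $j_0$-th argument by $e_\specialopT$ up to $\theoryeq{U}$; then \refcor~\ref{Cor1}, applied to $\anyopS_m$ with its $j_0$-th argument literally $e_\specialopT$, yields $e_\specialopT$, and transitivity closes the step.

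I expect no serious obstacle, as the entire algebraic content is carried by \refcor~\ref{Cor1} (which itself rests on \reflem~\ref{Lemma1} and \refpros~\ref{ax:sunits}, \ref{ax:tvar0} and \ref{ax:tconst}); the theorem merely lifts it from operation symbols to arbitrary terms. The one point demanding care is the bridge between a subterm that is only \emph{provably} equal to $e_\specialopT$ and the hypothesis of \refcor~\ref{Cor1}, which requires the \emph{literal} constant $e_\specialopT$ as an argument; this gap is closed by the operation-congruence inference rule together with transitivity. A secondary bookkeeping point is to confirm that $x_k$ genuinely occurs in some immediate subterm $t_{j_0}$, so that the induction hypothesis is applicable there, which holds exactly because $x_k$ is a free variable of $s$.
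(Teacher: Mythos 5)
Your proposal is correct and takes essentially the same route as the paper's own proof: structural induction on $s$, with the single-variable base case, locating an immediate subterm containing $x_k$, applying the induction hypothesis to it, and then invoking \refcor~\ref{Cor1} to collapse the head operation. If anything you are slightly more scrupulous than the paper, which passes silently from $s_l[u_i/x_i] \theoryeq{U} e_\specialopT$ to the premiss of \refcor~\ref{Cor1} (stated for a \emph{literal} occurrence of $e_\specialopT$), whereas you make the congruence-plus-transitivity bridge explicit.
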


\begin{proof}
  Proof by induction. The base case is when $s$ is just a variable, in which case the statement is trivially true: $s[u_i/x_i] \theoryeq{U} x[e_\specialopT/x] \theoryeq{U} e_\specialopT$.

  Now suppose that for the terms $s_1,\ldots,s_m$ ($m \geq 1$) our statement holds, and suppose that $\bigcup_{i=1}^m \var(s_i) = \{x_1,\ldots,x_n\}$. Let $\anyopS_m$ be any $m$-ary operation  from the signature $\signatureS$ and consider
  \[
  \anyopS_m[s_j/x_j]
  \]
  We need to show that if $u_1, \ldots, u_n$ are terms in $\bb{U}$ such that there is a $k \leq n$ with $u_k = e_\specialopT$, then:
  \[
  \anyopS_m[s_j[u_i/x_i]/x_j] \theoryeq{U} e_\specialopT
  \]
  To see this, find $1 \leq l \leq m$ such that $s_l$ contains the variable $x_k$, so that we know one of the free variables in $s_l$ is replaced by $e_\specialopT$. By the induction hypothesis, we know that for $s_l$ we have the equality:
  \[
  s_l[u_i/x_i] \theoryeq{U} e_\specialopT
  \]
  Therefore, we know that $\anyopS_m[s_j[u_i/x_i]/x_j]$ satisfies the premiss of \refcor~\ref{Cor1}. We conclude that:
  \[
  \anyopS_m[s_j[u_i/x_i]/x_j] \theoryeq{U} e_\specialopT
  \]
  And hence, by induction, the statement is true for all $\bb{S}$-terms.
\end{proof}

\begin{example}\label{ex:multiplicativezeroes}
  \begin{enumerate}[label={[\arabic*]}]
    \item \emph{List monad}: Since the theory of monoids, corresponding to the list monad, satisfies \ref{ax:sunits}, \ref{ax:tvar0}, and \ref{ax:tconst}, we know that a candidate distributive law for the list monad over itself has to map any list of lists containing the empty list to the empty list. This is the first step in showing that there is no distributive law for the list monad over itself.
    \item \emph{List and lift monad:} Since the theory of pointed sets, corresponding to the lift monad, satisfies \ref{ax:tvar0} and \ref{ax:tconst}, we also know that any distributive law $L(-)_\bot \Rightarrow (-)_\bot L$ has to map every list containing the constant $\bot$ to $\bot$.
    \item \emph{Multiset and powerset monad:} The algebraic theories of commutative monoids and join semilattices, corresponding to the multiset monad and powerset monad respectively, both satisfy all three requirements \ref{ax:sunits}, \ref{ax:tvar0}, and \ref{ax:tconst}. So any distributive law $MP \Rightarrow PM$ has to map every multiset containing the empty set to the empty set. Also, any distributive law $PM \Rightarrow MP$ has to map every set containing the empty multiset to the empty multiset, although we will later see that such a distributive law does not exist.
  \end{enumerate}
\end{example}

If $\bb{T}$ has more than one constant, this could lead to inconsistencies. Our next no-go theorem makes this precise:

\begin{corollary}[No-Go Theorem: Too Many Constants]\label{thm:nogoConstants}
  Let $\bb{S}$ and $\bb{T}$ be algebraic theories with properties \ref{ax:sunits} and \ref{ax:tvar0} respectively. Further assume that there is a term $s$ in $\bb{S}$ with at least two free variables. Then if $\bb{T}$ has more than one constant, there exists no composite theory of $\bb{S}$ and $\bb{T}$.
\end{corollary}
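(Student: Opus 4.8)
The plan is to assume, for contradiction, that a composite theory $\bb{U}$ of $\bb{S}$ and $\bb{T}$ exists, and then to exploit the two distinct constants of $\bb{T}$ by applying \refthm~\ref{Prop1} twice. Since $\bb{T}$ has more than one constant it certainly has at least one, so property \ref{ax:tconst} holds; together with the hypotheses \ref{ax:sunits} on $\bb{S}$ and \ref{ax:tvar0} on $\bb{T}$, every assumption of \refthm~\ref{Prop1} is in force. The crucial observation is that \refthm~\ref{Prop1} may be instantiated with \emph{any} particular constant of $\bb{T}$ playing the role of $e_\specialopT$, so having two distinct constants gives us two usable instances of that theorem.

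Fix constants $c_1, c_2$ of $\bb{T}$ that are distinct in $\bb{T}$, i.e.\ $c_1 \neq_\bb{T} c_2$, and let $s$ be the given $\bb{S}$-term with $n \geq 2$ free variables $x_1, \ldots, x_n$. The key idea is that because $s$ has at least two variables, I can substitute both constants into $s$ at once: take $u_1 = c_1$ and $u_2 = \cdots = u_n = c_2$. Applying \refthm~\ref{Prop1} with $e_\specialopT = c_1$ (legitimate since $u_1 = c_1$) yields $s[u_i/x_i] \theoryeq{U} c_1$, and applying it again with $e_\specialopT = c_2$ (legitimate since $u_2 = c_2$, which exists because $n \geq 2$) yields $s[u_i/x_i] \theoryeq{U} c_2$. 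Chaining these two equalities gives $c_1 \theoryeq{U} c_2$.

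It then remains to pull this $\bb{U}$-equality back to a $\bb{T}$-equality, and this is where essential uniqueness from \refdef~\ref{def:composite} enters — it is the single step that uses that $\bb{U}$ is genuinely \emph{composite} rather than merely an extension that happens to identify the constants. Both $c_1$ and $c_2$ are $\bb{T}$-terms in the empty variable context, so each is trivially its own separation into a $\bb{T}$-term over an empty family of $\bb{S}$-subterms. Feeding $c_1 \theoryeq{U} c_2$ into essential uniqueness produces functions out of $\emptyset$, and condition~(a) of \refdef~\ref{def:composite} then reads $c_1 \theoryeq{T} c_2$, contradicting $c_1 \neq_\bb{T} c_2$. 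Hence no composite theory can exist.

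I expect the only delicate point to be this final conservativity step: one must check that, with no $\bb{S}$-subterms present to be tracked by the functions $\essuniqfunctionA, \essuniqfunctionB$, essential uniqueness really does force $c_1 \theoryeq{T} c_2$, so that the two constants cannot be identified purely "inside $\bb{U}$" while remaining distinct in $\bb{T}$. Everything else is a routine double invocation of \refthm~\ref{Prop1}, made possible precisely by the hypothesis that $\bb{S}$ contains a term with at least two free variables, which is what lets both constants occupy $s$ simultaneously.
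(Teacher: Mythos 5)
Your proposal is correct and follows essentially the same route as the paper: the paper's proof likewise applies \refthm~\ref{Prop1} twice to obtain $c_1 \theoryeq{U} s[c_1/x_1, c_2/x_2] \theoryeq{U} c_2$ and concludes by contradiction. The only difference is that you spell out two points the paper leaves implicit — that \refthm~\ref{Prop1} may be instantiated with either constant in the role of $e_\specialopT$, and that the final contradiction rests on essential uniqueness making $\bb{U}$ conservative over $\bb{T}$ on closed terms, so $c_1 \theoryeq{U} c_2$ really forces $c_1 \theoryeq{T} c_2$ — both of which are correct and welcome elaborations rather than a different argument.
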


\begin{proof}
  Suppose that $\bb{U}$ is a composite theory of $\bb{S}$ and $\bb{T}$ and let $c_1$ and $c_2$ be distinct constants in $\bb{T}$. Then by \refthm~\ref{Prop1} we have:
  \[
  c_1 \theoryeq{U} s[c_1/x_1, c_2/x_2] \theoryeq{U} c_2
  \]
  Contradiction. So $\bb{U}$ cannot be a composite of $\bb{S}$ and $\bb{T}$.
\end{proof}

By \refcor~\ref{cor:noComposite-noDistlaw}, this also yields a no-go theorem for distributive laws.
We give a few examples of monads that do not distribute over one another as a consequence of this no-go theorem.

\begin{example}[Spotting a mistake in the literature: list and exception monad]
We have seen in \refex~\ref{ex:multiplicativezeroes} that the theory of monoids, corresponding to the list monad, satisfies \ref{ax:sunits}. It also has a term with two free variables, namely $x*y$, where $*$ is the binary operation in the signature of the theory. The exception monad satisfies \ref{ax:tvar0}, so when the exception monad has more than one exception, \refcor~\ref{thm:nogoConstants} states that there is no distributive law $L\circ (-+E) \Rightarrow (-+E) \circ L$.

However, \citet[Example 4.12]{ManesMulry2008} claim to have a distributive law of this type for the case where $E = \{a,b\}$, given by:
\begin{align}
\lambda [] &= [] \\
\lambda [e] & = e \text{ for any exception } e \in E \\
\lambda L & = L \text{ if no element of L is in } E \\
\lambda L & = a \text{ otherwise.}
\end{align}
We check more concretely that this cannot be a distributive law by showing that it fails the first multiplication axiom from \refdef~\ref{def:distlaw}:
\begin{center}
  \begin{tikzcd}[column sep=small]
  {[[b],[]]} \arrow[d, "\mu^L_{EX}"] \arrow[rr, "L(\lambda_X)"] & & {[b, []]} \arrow[rr, "\lambda_{LX}"] & & a \arrow[d, "E(\mu^L_X)"] \\
  {[b]} \arrow[rrr, "\lambda_X"]& & & b \arrow[r, phantom, "\neq"] & a
  \end{tikzcd}
\end{center}
The given distributive law seems to follow directly from \citet[Theorem 4.6]{ManesMulry2008}. This would indicate that in its current form, this theorem produces falsehoods. We leave identifying the scope of the problem with this theorem to later work.

It is important to notice that \refcor~\ref{thm:nogoConstants} does not contradict the well-known result that the exception monad distributes over every set monad $S$; that result is for the other direction $(-+E) \circ S \Rightarrow S \circ (-+E)$.
\end{example}

\begin{example}[Monads that are themselves compositions of other monads]
  \begin{enumerate}[label={[\arabic*]}]
    \item \emph{Ring monad.} We saw in \refex~\ref{ex:ring-monad} how the composition of the list monad and the Abelian group monad resulted in the ring monad. Since the ring monad has two constants, it satisfies the conditions for $T$ in the no-go theorem. Therefore none of the monads list, multiset, or powerset distribute over the ring monad; they all satisfy the conditions for $S$ in the no-go theorem.
    \item A similar story holds for the composition of the multiset monad with itself. Because of \refcor~\ref{thm:nogoConstants}, there is no distributive law $M\circ(M\circ M) \Rightarrow (M\circ M)\circ M$. There could, however, still be a distributive law $(M \circ M) \circ M \Rightarrow M \circ (M\circ M)$.
  \end{enumerate}
\end{example}

\begin{counter}
It is well known that the exception monad distributes over itself. Even though the corresponding theory satisfies properties \ref{ax:sunits} and \ref{ax:tvar0}, there are no terms with more than one free variable, and hence \refcor~\ref{thm:nogoConstants} does not apply.
\end{counter}

\subsection{The One Distributive Law, If It Exists}

Needing just the properties \ref{ax:sunits}, \ref{ax:tvar0}, and \ref{ax:tconst}, \refthm~\ref{Prop1} already greatly restricts the possibilities for a distributive law between monads $S$ and $T$. We will now see that if both $\bb{S}$ and $\bb{T}$ have binary terms with unit (properties \ref{ax:sbinary} and \ref{ax:tunit}), then in a composite theory, the binary of $\bb{S}$ distributes over the binary of $\bb{T}$ like times over plus in \refeq~\eqref{timesoverplus}. For the monads corresponding to these theories, this could mean that there is only one distributive law, if there exists a distributive law at all.

We derive this distributional behaviour in three stages, relying heavily on the separation property of a composite theory: every term can be written as a $\bb{T}$-term of $\bb{S}$-terms. If $\specialopS$ is the binary in $\bb{S}$, and $\specialopT$ the binary in $\bb{T}$, and $t_0[s_i/y_i]$ is the separated term equal to $\specialopS(\specialopT(y_1,y_2), x_0)$, then we derive the following about $t_0[s_i/y_i]$:
\begin{enumerate}
  \item First, we prove which variables appear in the $\bb{S}$-terms of the separated term: either both $y_1$ and $x_0$ or both $y_2$ and $x_0$, and nothing else.
  \item Then, we prove that each of the $\bb{S}$-terms of the separated term is either equal to $\specialopS(y_1,x_0)$ or $\specialopS(y_2,x_0)$.
  \item Finally, we derive that the separated term $t_0[s_i/y_i]$ has to be equal to $\specialopT(\specialopS(y_1,x_0),\specialopS(y_2,x_0))$.
\end{enumerate}

The first step:

\begin{lemma}\label{t_0(s_i)}
  Let $\bb{S}$ and $\bb{T}$ be two algebraic theories satisfying \ref{ax:svar0}, \ref{ax:svar1}, \ref{ax:sunits}, and \ref{ax:sbinary} and \ref{ax:tvar0}, \ref{ax:tconst}, and \ref{ax:tunit} respectively. Let $\bb{U}$ be a composite theory of $\bb{S}$ and $\bb{T}$.  Then there is a $\bb{T}$-term $t_0$ and there are $\bb{S}$-terms $s_i$, $1 \leq i \leq \#\var(t_0)$ such that:
  \[
    \specialopS(\specialopT(y_1,y_2), x_0) \theoryeq{U} t_0[s_i/y'_i]
  \]
  and for each $1 \leq i \leq \#\var(t_0)$:
  \[
  \var(s_i) = \{y_1,x_0\} \qquad \text{or} \qquad \var(s_i) = \{y_2,x_0\}
  \]
  Furthermore, there is an $i$ such that $\var(s_i) = \{y_1,x_0\}$ and an $i$ such that $\var(s_i) = \{y_2,x_0\}$.

  Similarly, there is a $\bb{T}$-term $t_1$ and there are $\bb{S}$-terms $s_j$, $1 \leq j \leq \#\var(t_1)$ such that:
  \[
    \specialopS(x_0,\specialopT(y_1,y_2)) \theoryeq{U} t_1[s_j/y'_j]
  \]
  and for each $1 \leq j \leq \#\var(t_1)$:
  \[
  \var(s_j) = \{y_1,x_0\} \qquad \text{or} \qquad \var(s_j) = \{y_2,x_0\}
  \]
  Furthermore, there is a $j$ such that $\var(s_j) = \{y_1,x_0\}$ and a $j$ such that $\var(s_j) = \{y_2,x_0\}$
\end{lemma}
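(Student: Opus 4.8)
The plan is to start from the separation property of the composite $\bb{U}$: by \refdef~\ref{def:composite} there is a $\bb{T}$-term $t_0$ and there are $\bb{S}$-terms $s_i$ with $\specialopS(\specialopT(y_1,y_2),x_0) \theoryeq{U} t_0[s_i/y'_i]$, and, discarding any variable that can be substituted away without affecting the left-hand side, we may assume each $s_i$ has variables among $\{y_1,y_2,x_0\}$. The two displayed equations in the statement are mirror images of one another---one applies the $\bb{S}$-binary with $\specialopT(y_1,y_2)$ on the left and $x_0$ on the right, the other with the arguments swapped---so I would prove the first in full and obtain the second by the identical argument, since the multiplicative-zero behaviour of $e_\specialopT$ (\refthm~\ref{Prop1}, \refcor~\ref{Cor1}) and the unit laws \ref{ax:sbinary} and \ref{ax:tunit} are insensitive to which argument of $\specialopS$ a variable occupies.

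The engine of the proof is a small family of substitutions, each chosen so that the left-hand side collapses to a syntactically simple term whose canonical separation is transparent. Two facts drive these collapses: $e_\specialopT$ is a unit for $\specialopT$ by \ref{ax:tunit}, so substituting it for a $y$ simplifies the inner $\specialopT$; and $e_\specialopT$ is a multiplicative zero for every $\bb{S}$-operation by \refthm~\ref{Prop1}, so any $s_i$ into whose variables $e_\specialopT$ has been substituted collapses to $e_\specialopT$ and can be absorbed into the $\bb{T}$-layer $t_0$. Concretely I would use three substitutions: first $x_0 \mapsto e_\specialopS$, which by the unit law \ref{ax:sbinary} turns the left-hand side into $\specialopT(y_1,y_2)$; second $y_2 \mapsto e_\specialopT$ and, symmetrically, $y_1 \mapsto e_\specialopT$, which turn the left-hand side into $\specialopS(y_1,x_0)$ and $\specialopS(y_2,x_0)$ respectively while folding every $s_i$ that mentions the killed variable into $t_0$; and third $x_0 \mapsto e_\specialopT$, which turns the left-hand side into $e_\specialopT$.

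For each collapsed equation I would compare the canonical separation of the simple left-hand term with the separation induced on the right-hand side, extracting information through essential uniqueness (\refdef~\ref{def:composite}). The substitution $x_0 \mapsto e_\specialopT$ forces every $s_i$ to contain $x_0$: an $s_i$ free of $x_0$ would survive as a genuine $\bb{S}$-subterm in a separation of the constant $e_\specialopT$, which cannot be reconciled with the trivial separation of $e_\specialopT$ that carries no $\bb{S}$-subterms. The substitution $x_0 \mapsto e_\specialopS$ forces each $s_i$ to involve at most one of $y_1,y_2$: matching against the two-variable separation of $\specialopT(y_1,y_2)$, essential uniqueness pairs each surviving $\bb{S}$-subterm with $y_1$ or with $y_2$, and by \ref{ax:svar1} a subterm provably equal to a variable contains only that variable. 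The substitutions $y_2 \mapsto e_\specialopT$ and $y_1 \mapsto e_\specialopT$ then pin the remaining variable down: the subterms surviving the former have variables inside $\{y_1,x_0\}$, those surviving the latter inside $\{y_2,x_0\}$. Combining these constraints with ``each $s_i$ contains $x_0$'' yields $\var(s_i)=\{y_1,x_0\}$ or $\var(s_i)=\{y_2,x_0\}$. That both classes are actually realised follows because the left-hand term genuinely depends on both $y_1$ and $y_2$: if no $s_i$ involved $y_1$, the right-hand side, and hence the left, would be unchanged by $y_1 \mapsto e_\specialopT$, contradicting that this substitution genuinely alters $\specialopS(\specialopT(y_1,y_2),x_0)$ in the consistent theory $\bb{U}$.

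The main obstacle is the essential-uniqueness bookkeeping in the previous paragraph: after each substitution one must match the $\bb{S}$-subterms of the two separations and, crucially, rule out ``spurious'' subterms provably equal to neither target. This is the delicate point, since variables can in principle thread through the $\bb{T}$-layer $t_0$ in unexpected ways; the resolution I expect is to lean simultaneously on the $\bb{S}$-side variable-constraint axioms \ref{ax:svar0} and \ref{ax:svar1}, on \ref{ax:tvar0} for the $\bb{T}$-side, and on consistency of the two theories, playing the several collapsed equations off against one another rather than attempting to read everything off a single substitution.
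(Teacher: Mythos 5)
Your architecture is essentially the paper's own: separation, the substitution $x_0 \mapsto e_\specialopS$ analysed through essential uniqueness, the substitution $x_0 \mapsto e_\specialopT$ combined with \refthm~\ref{Prop1} to force $x_0 \in \var(s_i)$, and a non-degeneracy argument showing both classes occur. But the step you yourself flag as ``the delicate point'' is a genuine gap, and it is exactly where the paper's proof does its real work. Essential uniqueness alone does \emph{not} ``pair each surviving $\bb{S}$-subterm with $y_1$ or with $y_2$'': it gives functions $\essuniqfunctionA\colon\{y_1,y_2\}\rightarrow Z$ and $\essuniqfunctionB\colon\var(t_0)\rightarrow Z$ with $\specialopT(\essuniqfunctionA(y_1),\essuniqfunctionA(y_2)) \theoryeq{T} t_0[\essuniqfunctionB(y'_i)/y'_i]$, but nothing in \refdef~\ref{def:composite} forces $\essuniqfunctionB(y'_i)\in\{\essuniqfunctionA(y_1),\essuniqfunctionA(y_2)\}$; a priori $t_0$ may carry ``spurious'' variables matched to neither. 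None of your four substitutions closes this hole: in particular $y_1\mapsto e_\specialopT$ and $y_2\mapsto e_\specialopT$ are vacuous here, since a survivor of $y_2\mapsto e_\specialopT$ is by definition an $s_i$ not containing $y_2$, so the conclusion you draw is trivially true, and an $s_i$ containing all of $y_1,y_2,x_0$ survives neither substitution and is never constrained by them (those substitutions only earn their keep in the next lemma, \reflem~\ref{t_0-diamond}). The missing idea is to substitute inside the \emph{matched $\bb{T}$-equation} rather than the original $\bb{U}$-equation: replacing both $\essuniqfunctionA(y_1)$ and $\essuniqfunctionA(y_2)$ by $e_\specialopT$ collapses the left side to $e_\specialopT$ by \ref{ax:tunit}, so \ref{ax:tvar0} makes the right side variable-free, forcing $\{\essuniqfunctionB(y'_i)\}\subseteq\{\essuniqfunctionA(y_1),\essuniqfunctionA(y_2)\}$. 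Only then does clause (d) of essential uniqueness give $s_i[e_\specialopS/x_0]\theoryeq{S} y_1$ or $y_2$ for \emph{every} $i$, after which \ref{ax:svar1} bounds $\var(s_i)$ and \ref{ax:svar0} excludes $\var(s_i)=\{x_0\}$ --- a case your summary (``combining these constraints with each $s_i$ contains $x_0$'') silently omits, since the constraints you state permit $\var(s_i)=\{x_0\}$.

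Two smaller divergences. First, the paper obtains the realisation of both classes from the same trick: if no $y'_i$ has $\essuniqfunctionB(y'_i)=\essuniqfunctionA(y_1)$, then substituting $e_\specialopT$ for $\essuniqfunctionA(y_2)$ yields $\essuniqfunctionA(y_1)\theoryeq{T} t_0[e_\specialopT/y'_i]$, a variable equal to a closed term, contradicting \ref{ax:tvar0}. Your alternative --- invariance of the right side under $y_1\mapsto e_\specialopT$ contradicts that the substitution ``genuinely alters'' the left side --- can be repaired, but it is not free: the lemma does not assume consistency of $\bb{U}$, so you must actually derive something like $y_1 \theoryeq{U} e_\specialopT$ by further substitutions ($x_0\mapsto e_\specialopS$, $y_2\mapsto e_\specialopT$) and then refute it via essential uniqueness and \ref{ax:tvar0} again. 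Second, your argument that an $x_0$-free $s_i$ ``cannot be reconciled with the trivial separation of $e_\specialopT$'' needs a folding step before essential uniqueness applies, because $t_0[s_i[e_\specialopT/x_0]/y'_i]$ is not itself a separated form: one must first collapse the $x_0$-containing $s_i$ to $e_\specialopT$ by \refthm~\ref{Prop1} and absorb those constants into the $\bb{T}$-layer. The paper instead applies \ref{ax:tvar0} to conclude the whole term is variable-free and notes that a surviving $s_k$ would retain its $y_1$ or $y_2$, using the variable information already established. So your sketch has the right skeleton and correctly locates the crux, but the mechanism that resolves the crux --- the $e_\specialopT$-substitution at the level of the abstract variables $Z$ --- is absent, and without it the central claim of the lemma is unproved.
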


\begin{proof}
We only explicitly prove the statements for $\specialopS(\specialopT(y_1,y_2), x_0)$. The proof for $\specialopS(x_0, \specialopT(y_1,y_2))$ is similar.

From the fact that $\bb{U}$ is a composite of the theories $\bb{S}$ and $\bb{T}$, we know that every term in $\bb{U}$ is separated. And so, there is a $t_0$ and there are $s_i$, $1 \leq i \leq \#\var(t_0)$ such that:
\begin{align}\label{lambda_2}
\specialopS(\specialopT(y_1,y_2), x_0) \theoryeq{U} t_0[s_i/y'_i]
\end{align}

To prove the second part of the statement, we substitute $x_0 \mapsto e_\specialopS$ in \refeq~\eqref{lambda_2}. This yields:
\begin{align}
& \specialopS(\specialopT(y_1,y_2), e_\specialopS) \theoryeq{U} t_0[s_i[e_\specialopS/x_0]/y'_i] \nonumber \\
\Rightarrow \;\; & \reason{e_\specialopS \text{ is the unit of } \specialopS} \nonumber \\
& \specialopT(y_1,y_2) \theoryeq{U} t_0[s_i[e_\specialopS/x_0]/y'_i] \label{t-knowledge2}
\end{align}
By the essential uniqueness property (see \refdef~\ref{def:composite}), we conclude that there are functions:
\begin{equation*}
  \essuniqfunctionA: \{y_1, y_2\} \rightarrow Z, \essuniqfunctionB: \{y'_i \given 1 \leq i \leq \#\var(t_0)\} \rightarrow Z
\end{equation*}
such that:
\begin{align}\label{t-knowledge1}
& \specialopT[\essuniqfunctionA(y_1)/y_1,\essuniqfunctionA(y_2)/y_2] \theoryeq{T} t_0[\essuniqfunctionB(y'_i)/y'_i]
\end{align}
Furthermore, whenever $\essuniqfunctionA(y_1) = \essuniqfunctionB(y'_i)$ or $\essuniqfunctionA(y_2) = \essuniqfunctionB(y'_i)$, we have respectively:
\begin{align}
  y_1 & \theoryeq{S} s_i[e_\specialopS/x_0] \label{s-knowledge1} \\
  y_2 & \theoryeq{S} s_i[e_\specialopS/x_0] \label{s-knowledge2}
\end{align}
Since we assume variables $y_1$ and $y_2$ to be distinct, essential uniqueness also gives us $\essuniqfunctionA(y_1) \neq \essuniqfunctionA(y_2)$.

We analyse \refeq~\eqref{t-knowledge1} more closely, comparing the variables appearing in both $\specialopT[\essuniqfunctionA(y_1)/y_1,\essuniqfunctionA(y_2)/y_2]$ and $t_0[\essuniqfunctionB(y'_i)/y'_i]$.

Firstly, we show that $\{\essuniqfunctionB(y'_i) \given 1 \leq i \leq \#\var(t_0)\} \subseteq \{\essuniqfunctionA(y_1), \essuniqfunctionA(y_2)\}$. This follows from the following equalities:
\begin{align*}
  & e_\specialopT \\
\theoryeq{T}\; & \reason{e_\specialopT \text{ is the unit for } \specialopT} \\
  & \specialopT(e_\specialopT, e_\specialopT) \\
\theoryeq{T}\; & \reason{\text{Substitution}} \\
  & \specialopT(\essuniqfunctionA(y_1), \essuniqfunctionA(y_2))[e_\specialopT/\essuniqfunctionA(y_1), e_\specialopT/\essuniqfunctionA(y_2)] \\
\theoryeq{T}\; & \reason{\text{Equation }\eqref{t-knowledge1}} \\
  & t_0[\essuniqfunctionB(y'_i)/y'_i][e_\specialopT/\essuniqfunctionA(y_1), e_\specialopT/\essuniqfunctionA(y_2)]
  \end{align*}
So: $e_\specialopT \theoryeq{T} t_0[\essuniqfunctionB(y'_i)/y'_i][e_\specialopT/\essuniqfunctionA(y_1), e_\specialopT/\essuniqfunctionA(y_2)]$. Then by assumption \ref{ax:tvar0}:
\begin{equation*}
  \var(t_0[\essuniqfunctionB(y'_i)/y'_i][e_\specialopT/\essuniqfunctionA(y_1), e_\specialopT/\essuniqfunctionA(y_2)]) = \emptyset
\end{equation*}
Therefore, $t_0[\essuniqfunctionB(y'_i)/y'_i]$ can contain no other variables than $\essuniqfunctionA(y_1)$ and $\essuniqfunctionA(y_2)$. That is:
\begin{equation}\label{eq:variable-subset}
  \{\essuniqfunctionB(y'_i) \given 1 \leq i \leq \#\var(t_0)\} \subseteq \{\essuniqfunctionA(y_1), \essuniqfunctionA(y_2)\}
\end{equation}
Next, we show that both $\essuniqfunctionA(y_1)$ and $\essuniqfunctionA(y_2)$ need to appear in $t_0[\essuniqfunctionB(y'_i)/y'_i]$. Suppose that $\essuniqfunctionA(y_1)$ does not appear in $t_0[\essuniqfunctionB(y'_i)/y'_i]$. Then from \refeq~\eqref{eq:variable-subset} we know that for all $y'_i \in \var(t_0)$, $\essuniqfunctionB(y'_i) = \essuniqfunctionA(y_2)$. Then:
\begin{align*}
 & \essuniqfunctionA(y_1) \\
\theoryeq{T}\; & \reason{e_\specialopT \text{ is the unit for } \specialopT} \\
 & \specialopT(\essuniqfunctionA(y_1), e_\specialopT) \\
\theoryeq{T}\; & \reason{\text{Substitution}} \\
 & \specialopT(\essuniqfunctionA(y_1), \essuniqfunctionA(y_2))[e_\specialopT/\essuniqfunctionA(y_2)] \\
\theoryeq{T}\; & \reason{\text{Equation \eqref{t-knowledge1}}} \\
 & t_0[\essuniqfunctionB(y'_i)/y'_i][e_\specialopT/\essuniqfunctionA(y_2)] \\
\theoryeq{T}\; & \reason{\text{For all } i, \essuniqfunctionB(y'_i) = \essuniqfunctionA(y_2)} \\
 & t_0[e_\specialopT/y'_i]
\end{align*}
So: $\essuniqfunctionA(y_1) \theoryeq{T} t_0[e_\specialopT/y'_i]$, but this contradicts assumption \ref{ax:tvar0}, because $\var(t_0[e_\specialopT/y'_i]) = \emptyset$, since every free variable in $t_0$ has been substituted with the constant $e_\specialopT$, and $\var(\essuniqfunctionA(y_1)) = \{\essuniqfunctionA(y_1)\} \neq \emptyset$.

So $\essuniqfunctionA(y_1)$ has to appear in $t_0[\essuniqfunctionB(y'_i)/y'_i]$. A similar line of reasoning yields the same conclusion for $\essuniqfunctionA(y_2)$. Therefore, there is an $y'_i$ such that $\essuniqfunctionB(y'_i) = \essuniqfunctionA(y_1)$ and there is an $y'_i$ such that $\essuniqfunctionB(y'_i) = \essuniqfunctionA(y_2)$.

In summary, if we define:
\begin{align*}
I_1 = \{i \given \essuniqfunctionB(y'_i) = \essuniqfunctionA(y_1)\} \\
I_2 = \{i \given \essuniqfunctionB(y'_i) = \essuniqfunctionA(y_2)\}
\end{align*}
then we know that neither $I_1$ nor $I_2$ is empty and that $I_1 \cup I_2 = \{1, \ldots, \#\var(t_0)\}$.

We finally consider \refeqs~\eqref{s-knowledge1} and \eqref{s-knowledge2} to reach a conclusion about the variables appearing in the terms $s_i$.

Since for all $i \in I_1: \essuniqfunctionB(y'_i) = \essuniqfunctionA(y_1)$, we have by \refeq~\eqref{s-knowledge1} that $s_i[e_\specialopS/x_0] \theoryeq{S} y_1$. Similarly, for all $i \in I_2$, $s_i[e_\specialopS/x_0] \theoryeq{S} y_2$.
By assumption \ref{ax:svar1}, we conclude that:
\begin{align}
\forall i \in I_1:\; & \{x_0, y_1\} \vdash s_i \label{variable_subset1} \\
\forall i \in I_2:\; & \{x_0, y_2\} \vdash s_i \label{variable_subset2}
\end{align}
In addition, since $y_1 \theoryeq{S} s_i[e_\specialopS/x_0]$, we would have $y_1$ equal to a constant if $y_1$ would not appear in $s_i$ (for $i \in I_1$), contradicting assumption \ref{ax:svar0}. Similarly for $y_2$ and $s_i$ for $i \in I_2$. And so:
\begin{align}
\forall i \in I_1:\; & y_1 \in \var(s_i) \label{y_1_in_var}\\
\forall i \in I_2:\; & y_2 \in \var(s_i) \label{y_2_in_var}
\end{align}.

To prove that $x_0 \in \var(s_i)$ for all $1 \leq i \leq \#\var(t_0)$, we substitute $x_0 \mapsto e_\specialopT$ in \refeq~\eqref{lambda_2}:
\[
\specialopS(\specialopT(y_1,y_2), e_\specialopT) \theoryeq{U} t_0[s_i[e_\specialopT/x_0]/y'_i]
\]
By \refthm~$\ref{Prop1}$, $\specialopS(\specialopT(y_1,y_2), e_\specialopT) \theoryeq{U} e_\specialopT$. Therefore we must have that also $t_0[s_i[e_\specialopT/x_0]/y'_i] \theoryeq{U} e_\specialopT$. Hence by assumption \ref{ax:tvar0}, $\var(t_0[s_i[e_\specialopT/x_0]/y'_i]) = \emptyset$. There are two cases:
\begin{itemize}
\item There is a $1 \leq k \leq \#\var(t_0)$ such that $x_0 \notin \var(s_k)$. Then substituting $x_0 \mapsto e_\specialopT$ has no effect on this $s_k$: $s_k[e_\specialopT/x_0] \theoryeq{U} s_k$. Suppose that $k \in I_1$. Then $y_1 \in \var(s_k)$ and hence: $y_1 \in \var(t_0[s_i[e_\specialopT/x_0]/y'_i])$, contradiction. We get a similar contradiction if $k \in I_2$.
\item All $s_i$ contain the variable $x_0$. Then by \refthm~\ref{Prop1}:
  \begin{equation*}
    s_i[e_\specialopT/x_0] \theoryeq{U} e_\specialopT
  \end{equation*}
  Then $t_0[s_i[e_\specialopT/x_0]/y'_i] \theoryeq{U} t_0[e_\specialopT/y'_i]$, agreeing with the fact that $\var(t_0[s_i[e_\specialopT/x_0]/y'_i]) = \emptyset$.
\end{itemize}
And so, for all $1 \leq i \leq \#\var(t_0)$, $x_0 \in \var(s_i)$.

This, together with \refeqs~\eqref{variable_subset1}, \eqref{variable_subset2}, \eqref{y_1_in_var} and \eqref{y_2_in_var}, proves that:
\[
\var(s_i) = \{y_1,x_0\} \qquad \text{or} \qquad \var(s_i) = \{y_2,x_0\}
\]
The fact that neither $I_1$ nor $I_2$ are empty means that this proves the lemma.
\end{proof}

With the first step done, we move on to the second step:

\begin{lemma}\label{t_0-diamond}
  Let $\bb{S}$ and $\bb{T}$ be two algebraic theories satisfying \ref{ax:svar0}, \ref{ax:svar1}, \ref{ax:sunits}, and \ref{ax:sbinary} and \ref{ax:tvar0}, \ref{ax:tvar1}, \ref{ax:tconst}, and \ref{ax:tunit} respectively. Let $\bb{U}$ be a composite theory of $\bb{S}$ and $\bb{T}$.  Then there is a $\bb{T}$-term $t_0$ and there are $\bb{S}$-terms $s_i$, $1 \leq i \leq \#\var(t_0)$ such that:
  \[
    \specialopS(\specialopT(y_1,y_2), x_0) \theoryeq{U} t_0[s_i/y'_i]
  \]
  and for each $1 \leq i \leq \#\var(t_0)$:
  \[
  s_i \theoryeq{S} \specialopS(y_1,x_0) \qquad \text{or} \qquad s_i \theoryeq{S} \specialopS(y_2,x_0)
  \]
  Furthermore, there is an $i$ such that $s_i \theoryeq{S} \specialopS(y_1,x_0)$ and there is an $i$ such that $s_i \theoryeq{S} \specialopS(y_2,x_0)$.

  Similarly, there is a $\bb{T}$-term $t_1$ and there are $\bb{S}$-terms $s_j$, $1 \leq j \leq \#\var(t_1)$ such that:
  \[
    \specialopS(x_0,\specialopT(y_1,y_2)) \theoryeq{U} t_1[s_j/y'_j]
  \]
  and for each $1 \leq j \leq \#\var(t_1)$:
  \[
  s_j \theoryeq{S} \specialopS(x_0,y_1) \qquad \text{or} \qquad s_j \theoryeq{S} \specialopS(x_0,y_2)
  \]
  Furthermore, there is a $j$ such that $s_j \theoryeq{S} \specialopS(y_1,x_0)$ and there is an $j$ such that $s_j \theoryeq{S} \specialopS(y_2,x_0)$.
\end{lemma}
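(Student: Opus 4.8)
The plan is to bootstrap from \reflem~\ref{t_0(s_i)}, which already tells us that in the separation
\[
\specialopS(\specialopT(y_1,y_2),x_0) \theoryeq{U} t_0[s_i/y'_i]
\]
each $s_i$ satisfies $\var(s_i) = \{y_1,x_0\}$ or $\var(s_i) = \{y_2,x_0\}$, with both cases occurring. Writing $I_1 = \{i \mid \var(s_i) = \{y_1,x_0\}\}$ and $I_2 = \{i \mid \var(s_i) = \{y_2,x_0\}\}$, I would pin down the $s_i$ for $i \in I_1$ and $i \in I_2$ separately by a symmetric pair of substitutions. The only genuinely new ingredient beyond \reflem~\ref{t_0(s_i)} is the extra hypothesis \ref{ax:tvar1}.

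For the indices in $I_1$, I would substitute $y_2 \mapsto e_\specialopT$ throughout the displayed equation. On the left, \ref{ax:tunit} collapses $\specialopT(y_1,e_\specialopT)$ to $y_1$, so the left-hand side becomes $\specialopS(y_1,x_0)$. On the right, each $s_i$ with $i \in I_2$ now has $e_\specialopT$ substituted into a free variable, so \refthm~\ref{Prop1} gives $s_i[e_\specialopT/y_2] \theoryeq{U} e_\specialopT$, while the $s_i$ with $i \in I_1$ are untouched since they do not contain $y_2$. Absorbing the constants into the $\bb{T}$-term, set $t_0' = t_0[e_\specialopT/y'_i : i \in I_2]$; congruence then yields a fresh separation
\[
\specialopS(y_1,x_0) \theoryeq{U} t_0'[s_i/y'_i : i \in I_1].
\]

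Now I would compare this against the trivial separation $\specialopS(y_1,x_0) \theoryeq{U} z[\specialopS(y_1,x_0)/z]$, in which the $\bb{T}$-term is a single variable $z$ and the sole $\bb{S}$-term is $\specialopS(y_1,x_0)$ itself. Essential uniqueness (\refdef~\ref{def:composite}) supplies functions $\essuniqfunctionA,\essuniqfunctionB$ into a common variable set $Z$; its first clause gives $\essuniqfunctionA(z) \theoryeq{T} t_0'[\essuniqfunctionB(y'_i)/y'_i]$, and since $\essuniqfunctionA(z)$ is a single variable, \ref{ax:tvar1} forces $\essuniqfunctionB(y'_i) = \essuniqfunctionA(z)$ for every $i \in I_1$. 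Its fourth clause then reads $\essuniqfunctionA(z) = \essuniqfunctionB(y'_i) \Leftrightarrow \specialopS(y_1,x_0) \theoryeq{S} s_i$, so $s_i \theoryeq{S} \specialopS(y_1,x_0)$ for all $i \in I_1$. Substituting $y_1 \mapsto e_\specialopT$ instead and running the identical argument gives $s_i \theoryeq{S} \specialopS(y_2,x_0)$ for all $i \in I_2$; the ``furthermore'' clause is immediate because $I_1$ and $I_2$ are nonempty by \reflem~\ref{t_0(s_i)}. The statement for $\specialopS(x_0,\specialopT(y_1,y_2))$ follows by the same reasoning with the roles of the two arguments of $\specialopS$ swapped.

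I expect the main obstacle to be purely organisational rather than conceptual: being careful that after the constant-substitution the equation genuinely presents a separated $\bb{T}$-term of $\bb{S}$-terms (so that essential uniqueness is legitimately applicable), and tracking the index sets $I_1,I_2$ together with the auxiliary functions $\essuniqfunctionA,\essuniqfunctionB$ accurately enough that the fourth essential-uniqueness clause delivers exactly the desired $\bb{S}$-equalities and not merely equalities of variable sets already known from \reflem~\ref{t_0(s_i)}.
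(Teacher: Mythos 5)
Your proposal is correct and follows essentially the same route as the paper's proof: bootstrapping from \reflem~\ref{t_0(s_i)}, using the symmetric pair of substitutions $y_1 \mapsto e_\specialopT$ and $y_2 \mapsto e_\specialopT$ with \refthm~\ref{Prop1} absorbing the resulting constants into a modified $\bb{T}$-term $t'_0$ so that the right-hand side is a genuine separated term, then invoking essential uniqueness against the trivial separation $z[\specialopS(\cdot,\cdot)/z]$ and using \ref{ax:tvar1} to force every $\essuniqfunctionB(y'_i)$ to equal $\essuniqfunctionA(z)$, whence clause (d) of \refdef~\ref{def:composite} delivers the $\bb{S}$-equalities. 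The organisational subtlety you flag at the end is precisely the point the paper handles by defining $t'_0 = t_0[e_\specialopT/y'_i]$ over the appropriate index set before applying essential uniqueness, so your plan is sound as stated.
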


\begin{proof}
Again, we only explicitly prove the statements for $\specialopS(\specialopT(y_1,y_2), x_0)$. The second half of the claim follows similarly.

From \reflem~\ref{t_0(s_i)} we know that there is a $\bb{T}$-term $t_0$ and there are $\bb{S}$-terms $s_i$ with
\begin{equation*}
  1 \leq i \leq \#\var(t_0)
\end{equation*}
such that:
  \[
    \specialopS(\specialopT(y_1,y_2), x_0) \theoryeq{U} t_0[s_i/y'_i]
  \]
We substitute $y_1 \mapsto e_\specialopT$:
\begin{align}
  & \specialopS(\specialopT(e_\specialopT,y_2), x_0) \theoryeq{U} t_0[s_i[e_\specialopT/y_1]/y'_i] \nonumber\\
  \Rightarrow \;\; & \reason{e_\specialopT \text{ is the unit of } \specialopT} \nonumber\\
  & \specialopS(y_2,x_0) \theoryeq{U} t_0[s_i[e_\specialopT/y_1]/y'_i] \nonumber\\
  \Rightarrow \;\; & \reason{\text{Substitution}} \nonumber\\
  & z[\specialopS(y_2,x_0)/z] \theoryeq{U} t_0[s_i[e_\specialopT/y_1]/y'_i] \label{eq:z-t0}
\end{align}
Where $z$ is just a variable, so that $z[\specialopS(y_2,x_0)/z]$ is a $\bb{T}$-term built out of $\bb{S}$-terms.

To use essential uniqueness from \refdef~\ref{def:composite}, we need two terms written as $\bb{T}$-terms of $\bb{S}$-terms. However, $t_0[s_i[e_\specialopT/y_1]/y'_i]$ is a $\bb{T}$-term built out of $\bb{S}$-terms with possibly a $\bb{T}$-constant in them. So we need to write $t_0[s_i[e_\specialopT/y_1]/y'_i]$ into a form where it is just a $\bb{T}$-term built out of $\bb{S}$-terms. We use \refthm~\ref{Prop1} in combination with our knowledge from \reflem~\ref{t_0(s_i)} about the variables appearing in each $s_i$ to do just that. Define:
\begin{align*}
I_1 & = \{i \given 1 \leq i \leq \#\var(t_0), \var(s_i) = \{y_1, x_0\}\} \\
I_2 & = \{i \given 1 \leq i \leq \#\var(t_0), \var(s_i) = \{y_2,x_0\}\}
\end{align*}
From \reflem~\ref{t_0(s_i)} we know that neither $I_1$ nor $I_2$ is empty and their union contains all $1 \leq i \leq \#\var(t_0)$. For each $i \in I_1$, we use \refthm~\ref{Prop1} to rewrite $s_i[e_\specialopT/y_1] \theoryeq{U} e_\specialopT$. For each $i \in I_2$, we know that since $y_1$ does not appear in $s_i$, $s_i[e_\specialopT/y_1] \theoryeq{U} s_i$. Therefore:
\[
t_0[s_i[e_\specialopT/y_1]/y'_i] \theoryeq{T} t_0[e_\specialopT/y'_i (i \in I_1), s_i/y'_i (i \in I_2)]
\]
Next, define:
\[
t'_0 = t_0[e_\specialopT/y'_i (i \in I_1)]
\]
Then $t'_0$ is a $\bb{T}$-term with free variables $\{y'_i \given i \in I_2\}$. We have:
\begin{align*}
   & t'_0[s_i/y'_i] \\
  \theoryeq{T} \;  & \reason{\text{Definition of } t'_0} \\
   & t_0[e_\specialopT/y'_i (i \in I_1), s_i/y'_i (i \in I_2)]  \\
  \theoryeq{T} \; &  t_0[s_i[e_\specialopT/y_1]/y'_i] \\
  \theoryeq{T} \; & \reason{\text{Equation \eqref{eq:z-t0}}} \\
   &  z[\specialopS(y_2,x_0)/z]
\end{align*}
Now we can use the essential uniqueness property, and conclude that there are functions $\essuniqfunctionA: \{z\} \rightarrow Z, \essuniqfunctionB: \{y'_i \given i \in I_2\} \rightarrow Z$ such that:
\begin{align}\label{t-knowledge3}
& z[\essuniqfunctionA(z)/z] \theoryeq{T} t'_0[\essuniqfunctionB(y'_i)/y'_i]
\end{align}
Furthermore, whenever $\essuniqfunctionB(y'_i) = \essuniqfunctionA(z)$, we have:
\begin{align}
   s_i & \theoryeq{S} \specialopS(y_2,x_0) \label{s-knowledge3}
\end{align}

From \refeq~\eqref{t-knowledge3} and assumption \ref{ax:tvar1} we conclude that $\{\essuniqfunctionA(z)\} \vdash t'_0[\essuniqfunctionB(y'_i)/y'_i]$. And hence for all $i \in I_2$, $\essuniqfunctionB(y'_i) = \essuniqfunctionA(z)$. So by \refeq~\eqref{s-knowledge3}, for each $i \in I_2$: $s_i \theoryeq{S} \specialopS(y_2,x_0)$, which gives us half of the desired conclusion about each $s_i$ ($1 \leq i \leq \#\var(t_0)$).

Finally, a similar argument using the substitution $y_2 \mapsto e_\specialopT$ instead of $y_1 \mapsto e_\specialopT$ leads to the conclusion that for each $i \in I_1$, $s_i \theoryeq{S} \specialopS(y_1, x_0)$.
\end{proof}

And finally, the last step, proving that $\specialopS$ distributes over $\specialopT$:

\begin{theorem}[Times over Plus Theorem]\label{theorem}
Let $\bb{S}$ and $\bb{T}$ be two algebraic theories satisfying \ref{ax:svar0}, \ref{ax:svar1}, \ref{ax:sunits}, and \ref{ax:sbinary} and \ref{ax:tvar0}, \ref{ax:tvar1}, \ref{ax:tconst}, and \ref{ax:tunit} respectively. Let $\bb{U}$ be a composite theory of $\bb{S}$ and $\bb{T}$.  Then $\specialopS$ distributes over $\specialopT$:
\begin{align}
    \specialopS(\specialopT(y_1,y_2), x_0) & \theoryeq{U} \specialopT(\specialopS(y_1,x_0),\specialopS(y_2,x_0)) \label{theoremeq1} \\
    \specialopS(x_0, \specialopT(y_1,y_2)) & \theoryeq{U} \specialopT(\specialopS(x_0,y_1),\specialopS(x_0,y_2)) \label{theoremeq2}
\end{align}
\end{theorem}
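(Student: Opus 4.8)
The plan is to read off the theorem as a short epilogue to \reflem~\ref{t_0-diamond}, which has already determined the separated form of $\specialopS(\specialopT(y_1,y_2),x_0)$ up to the shape of its $\bb{S}$-components. By that lemma there is a $\bb{T}$-term $t_0$ and $\bb{S}$-terms $s_i$ with $\specialopS(\specialopT(y_1,y_2),x_0) \theoryeq{U} t_0[s_i/y'_i]$, where each $s_i \theoryeq{S} \specialopS(y_1,x_0)$ or $s_i \theoryeq{S} \specialopS(y_2,x_0)$. Since $\bb{S}$-provable equalities are also $\bb{U}$-provable, congruence lets me replace every $s_i$ by its canonical representative. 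Writing $\hat t_0(w_1,w_2)$ for the binary $\bb{T}$-term obtained from $t_0$ by renaming every slot landing on $\specialopS(y_1,x_0)$ to $w_1$ and every slot landing on $\specialopS(y_2,x_0)$ to $w_2$, this produces
\begin{equation*}
  \specialopS(\specialopT(y_1,y_2),x_0) \theoryeq{U} \hat t_0[\specialopS(y_1,x_0)/w_1,\,\specialopS(y_2,x_0)/w_2]. \tag{$\ast$}
\end{equation*}

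The next step is to determine $\hat t_0$ by substituting $x_0 \mapsto e_\specialopS$ throughout $(\ast)$. On the left, \refpro~\ref{ax:sbinary} collapses $\specialopS(\specialopT(y_1,y_2),e_\specialopS)$ to $\specialopT(y_1,y_2)$; on the right, the same unit law sends each $\specialopS(y_1,e_\specialopS)$ to $y_1$ and each $\specialopS(y_2,e_\specialopS)$ to $y_2$, so the whole right-hand side becomes $\hat t_0(y_1,y_2)$. Hence $\specialopT(y_1,y_2) \theoryeq{U} \hat t_0(y_1,y_2)$, which identifies $\hat t_0$ with $\specialopT$ as $\bb{U}$-terms — exactly the missing information. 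Substituting back, the simultaneous substitution $y_1 \mapsto \specialopS(y_1,x_0)$, $y_2 \mapsto \specialopS(y_2,x_0)$ turns this into $\specialopT(\specialopS(y_1,x_0),\specialopS(y_2,x_0)) \theoryeq{U} \hat t_0[\specialopS(y_1,x_0)/w_1,\,\specialopS(y_2,x_0)/w_2]$, whose right-hand side is precisely that of $(\ast)$. Chaining the two equalities yields \refeq~\eqref{theoremeq1}, and \refeq~\eqref{theoremeq2} follows verbatim from the symmetric half of \reflem~\ref{t_0-diamond} about $\specialopS(x_0,\specialopT(y_1,y_2))$.

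Notably, this route uses only \refpro~\ref{ax:sbinary} at the final stage, and does not need the non-emptiness of the two index sets from \reflem~\ref{t_0-diamond}: once the separated term has been forced into the form $(\ast)$, the two unit substitutions do all the remaining work. The genuine difficulty in establishing \emph{times over plus} has therefore been absorbed into the preceding lemmas, which pin down the canonical shapes of the $\bb{S}$-terms. The only points I would check with care here are that collapsing the distinct slots $y'_i$ of $t_0$ down to the two variables $w_1,w_2$ is legitimate (it is, since congruence requires the substituted $\bb{S}$-terms only to be $\bb{U}$-equal, not syntactically identical) and that the substitution $x_0 \mapsto e_\specialopS$ interacts correctly with the nested $\bb{S}$-terms appearing on the right-hand side of $(\ast)$ rather than with the outer $\bb{T}$-structure.
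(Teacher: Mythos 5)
Your proposal is correct and is essentially the paper's own proof: the paper likewise takes the separated form from \reflem~\ref{t_0-diamond}, substitutes $x_0 \mapsto e_\specialopS$ to conclude $\specialopT(y_1,y_2) \theoryeq{U} t_0[y_1/y'_i\,(i \in I_1),\, y_2/y'_i\,(i \in I_2)]$ (your $\hat t_0(y_1,y_2)$, with your $w_1,w_2$ renaming being just a repackaging of the paper's $I_1/I_2$ bookkeeping), and then substitutes $\specialopS(y_1,x_0)$ and $\specialopS(y_2,x_0)$ back in and chains the equalities. Your side remarks are also accurate — the final step indeed uses only the unit law \ref{ax:sbinary} and does not need non-emptiness of $I_1, I_2$, since substitution in equational logic is sound even for variables that do not occur.
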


\begin{proof}
Again, we only explicitly prove the first statement, as the proof of the second statement is similar.

From \reflem~\ref{t_0-diamond} we know that there is a $\bb{T}$-term $t_0$ and there are $\bb{S}$-terms $s_i$ with:
\begin{equation*}
  1 \leq i \leq \#\var(t_0)
\end{equation*}
such that either $s_i = \specialopS(y_1,x_0)$ or $s_i = \specialopS(y_2,x_0)$ and:
 \[
   \specialopS(\specialopT(y_1,y_2), x_0) \theoryeq{U} t_0[s_i/y'_i]
 \]
Define:
\begin{align*}
I_1 & = \{i \given 1 \leq i \leq \#\var(t_0), s_i = \specialopS(y_1,x_0)\} \\
I_2 & = \{i \given 1 \leq i \leq \#\var(t_0), s_i = \specialopS(y_2,x_0)\}
\end{align*}
Then, using the substitution $x_0 \mapsto e_\specialopS$, we get:
\begin{align*}
& \specialopT(y_1,y_2) \\
\theoryeq{U} \; & \reason{e_\specialopS \text{ is the unit of } \specialopS} \\
& \specialopS(\specialopT(y_1,y_2), e_\specialopS) \\
\theoryeq{U} \; & \reason{\text{Substitution}} \\
& \specialopS(\specialopT(y_1,y_2), x_0)[e_\specialopS/x_0] \\
\theoryeq{U} \; & \reason{\text{Lemma \ref{t_0-diamond}}} \\
& t_0[s_i[e_\specialopS/x_0]/y'_i] \\
\theoryeq{U} \; & \reason{s_i = \specialopS(y_1,x_0) \text{ or } s_i = \specialopS(y_2,x_0) \text{ and } e_\specialopS \text{ is the unit of } \specialopS} \\
& t_0 [y_1/y'_i (i \in I_1), y_2/y'_i (i \in I_2)]
\end{align*}
So:
\begin{equation}\label{eq:theorem1}
  \specialopT(y_1,y_2) \theoryeq{U} t_0 [y_1/y'_i (i \in I_1), y_2/y'_i (i \in I_2)]
\end{equation}
We also have:
\begin{align*}
& t_0[s_i/y'_i]\\
\theoryeq{U} \; & \reason{\text{Lemma \ref{t_0-diamond}}} \\
& t_0[\specialopS(y_1,x_0)/y'_i (i \in I_1), \specialopS(y_2,x_0)/y'_i (i \in I_2)] \\
\theoryeq{U}\; & \reason{\text{Substitution: } \text{term} = y[\text{term}/y]} \\
& t_0[y_1[\specialopS(y_1,x_0)/y_1]/y'_i (i \in I_1), y_2[\specialopS(y_2,x_0)/y_2]/y'_i (i \in I_2)]
\end{align*}
So we conclude:
\begin{align*}
& t_0[s_i/y'_i] \\
\theoryeq{U} \; &  t_0[y_1[\specialopS(y_1,x_0)/y_1]/y'_i (i \in I_1), y_2[\specialopS(y_2,x_0)/y_2]/y'_i (i \in I_2)] \\
\theoryeq{U} \; & \reason{\text{Equation \eqref{eq:theorem1}}} \\
&  \specialopT(y_1,y_2)[\specialopS(y_1,x_0)/y_1, \specialopS(y_2,x_0)/y_2] \\
\theoryeq{U} \; &  \specialopT(\specialopS(y_1,x_0),\specialopS(y_2,x_0))
\end{align*}
Which proves the theorem.
\end{proof}

This greatly reduces the search space for distributive laws, and in some cases even narrows it down to precisely one possibility:

\begin{corollary}[Upper Limit]\label{cor:upperlimit}
   Let $S$ and $T$ be two monads with corresponding algebraic theories $\bb{S}$ and $\bb{T}$ that have signatures with one constant and one binary operation, and nothing else. If for both theories the constant acts as a unit for the binary operation and the theories further satisfy \ref{ax:svar0} and \ref{ax:svar1}, and \ref{ax:tvar0} and \ref{ax:tvar1} respectively, then there exists \emph{at most} one distributive law $ST \Rightarrow TS$.
\end{corollary}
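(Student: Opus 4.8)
The plan is to reduce uniqueness of the distributive law to uniqueness of the separation in a composite theory, and then to show that \refthms~\ref{Prop1} and~\ref{theorem} already pin that separation down completely.

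First I would check that the hypotheses supply everything the earlier theorems need. Since the signature of $\bb{S}$ consists of exactly one constant and one binary operation $\specialopS$ for which the constant is a unit, property~\ref{ax:sunits} holds automatically (the only operation of positive arity is $\specialopS$), as does~\ref{ax:sbinary}; together with the assumed~\ref{ax:svar0} and~\ref{ax:svar1} this gives all of the $\bb{S}$-side hypotheses of \refthm~\ref{theorem}. Symmetrically, \ref{ax:tconst} and~\ref{ax:tunit} hold for $\bb{T}$, and with the assumed~\ref{ax:tvar0},~\ref{ax:tvar1} we obtain all the $\bb{T}$-side hypotheses. Hence in any composite theory $\bb{U}$ of $\bb{S}$ and $\bb{T}$ both the multiplicative-zero law ($s[e_\specialopT/x_k]\theoryeq{U} e_\specialopT$, \refthm~\ref{Prop1}) and the times-over-plus laws~\eqref{theoremeq1}--\eqref{theoremeq2} hold.

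By \refthm~\ref{thm:distlaw-vs-compositetheory} every distributive law $ST \Rightarrow TS$ arises from a composite theory, in which $\lambda_X$ is recovered as the map sending the class of an $\bb{S}$-term built over $\bb{T}$-terms to the class of its separated form, a $\bb{T}$-term built over $\bb{S}$-terms. Since the underlying functor $TS$, and hence the set $TSX$ of $\bb{T}$-terms-of-$\bb{S}$-terms modulo $\bb{S},\bb{T}$, do not depend on which composite we chose, it suffices to prove that the separated form of an arbitrary mixed term is forced, that is, is the same in every composite theory. To do this I would fix an $\bb{S}$-term $s$ and $\bb{T}$-terms $t_1,\dots,t_n$ and argue, by structural induction on $s$ (built from variables, $e_\specialopS$ and $\specialopS$), that $s[t_j/x_j]$ is provably equal in every $\bb{U}$ to a fixed $\bb{T}$-term-of-$\bb{S}$-terms $w$, using only the unit equations, the multiplicative-zero law, and the distributivity laws, all of which hold in every composite. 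The base cases (a variable, or $e_\specialopS$) are already separated. For $s = \specialopS(s_1,s_2)$ the inductive hypothesis gives separated forms $w_1,w_2$, and an inner induction on the $\bb{T}$-structure of $w_1$ and then $w_2$ repeatedly applies~\eqref{theoremeq1} and~\eqref{theoremeq2} via substitution (to push $\specialopS$ past each $\specialopT$) and \refthm~\ref{Prop1} (to collapse any $e_\specialopT$), terminating when both arguments are single $\bb{S}$-term leaves, whose combination under $\specialopS$ is again a pure $\bb{S}$-term. Because each rewriting step instantiates an equation valid in \emph{every} composite, the syntactic object $w$ depends only on $s$, the $t_j$, and the fixed rewriting strategy, not on $\bb{U}$; hence $\lambda_X$ takes the same value in every composite, and at most one distributive law can exist.

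The main obstacle I anticipate is the bookkeeping in the inner induction: making the ``push $\specialopS$ inward, absorb $e_\specialopT$'' rewriting precise enough that it provably terminates in a genuine $\bb{T}$-term-of-$\bb{S}$-terms, and then arguing cleanly that the resulting separation is independent of the composite. This last point is where essential uniqueness does the real work: any two separated forms so obtained are equal in $\bb{U}$, hence equal modulo $\bb{S},\bb{T}$, so they name the same element of $TSX$ regardless of the composite chosen.
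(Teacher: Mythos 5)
Your proposal is correct and follows the paper's own (implicit) route: the corollary is stated there without proof as a direct consequence of \refthms~\ref{Prop1} and~\ref{theorem}, and your reconstruction---observing that a one-constant-one-binary signature with unit automatically yields \ref{ax:sunits}, \ref{ax:sbinary}, \ref{ax:tconst} and \ref{ax:tunit}, then using the multiplicative-zero and times-over-plus equations to force, by induction, the separated form of every mixed term---is exactly the intended argument. The one point worth noting is that recovering $\lambda_X$ as the separation map uses the full correspondence of \citet[Theorem 5]{PirogStaton2017}, slightly more than the one-directional statement of \refthm~\ref{thm:distlaw-vs-compositetheory} as quoted in the paper, but this is precisely what the paper itself relies on here.
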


\begin{example}[List Monad]
  Corollary \ref{cor:upperlimit} applies to the list monad, and hence there is at most one distributive law for this list monad over itself: the one that distributes lists of lists like times over plus.
\end{example}

\begin{example}[Unique Distributive Laws]\label{ex:uniquedistlaws}
 Let $S$ be any of the monads tree, list, or multiset. Then the corresponding algebraic theory $\bb{S}$ contains only linear equations. Let $T$ be either the multiset or powerset monad. Since the multiset and powerset monads are commutative monads, it follows from \citet[Theorem 4.3.4]{ManesMulry2007} that there is a distributive law $ST \Rightarrow TS$. Corollary \ref{cor:upperlimit} states that this distributive law is unique. In particular, the distributive law for the multiset monad over itself mentioned in \refex~\ref{ex:multisetdistlaw} is unique.
\end{example}

\subsection{Lacking the Abides Property: a No-Go Theorem}

With \refthm~\ref{theorem} narrowing down the possible distributive laws for two monads, it is easier to find cases in which no distributive law can exist at all. We found two properties that clash with \refthm~\ref{theorem}, one for $\bb{T}$ and one for $\bb{S}$. In this section we show that not satisfying the abides equation, \refpro~\ref{ax:tspecialproperty}, in combination with \refthm~\ref{theorem} prevents the existence of a distributive law. In the next section we do the same for idempotence, \refpro~\ref{ax:sidem}. Both properties are quite common, making the resulting no-go theorems hold for significant classes of monads.

\begin{theorem}[No-Go Theorem: Lacking Abides]\label{thm:nogoTreeList}
  If $\bb{S}$ and $\bb{T}$ are algebraic theories satisfying the conditions of \refthm~\ref{theorem} and $\bb{T}$ additionally satisfies \ref{ax:tspecialproperty}, then there does not exist a composite theory of $\bb{S}$ and $\bb{T}$.
\end{theorem}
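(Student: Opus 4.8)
The plan is to assume a composite theory $\mathbb{U}$ of $\mathbb{S}$ and $\mathbb{T}$ exists and to derive the abides equation in $\mathbb{T}$ on four genuinely distinct variables, contradicting \ref{ax:tspecialproperty}. Since $\mathbb{S}$ and $\mathbb{T}$ satisfy the hypotheses of \refthm~\ref{theorem}, the binary operation $\specialopS$ distributes over $\specialopT$ on both sides, so I can normalise the single $\mathbb{U}$-term $\specialopS(\specialopT(y_1,y_2),\specialopT(y_3,y_4))$ in two different orders. Distributing over the first argument first and then the second yields $\specialopT(\specialopT(\specialopS(y_1,y_3),\specialopS(y_1,y_4)),\specialopT(\specialopS(y_2,y_3),\specialopS(y_2,y_4)))$, whereas distributing over the second argument first and then the first yields $\specialopT(\specialopT(\specialopS(y_1,y_3),\specialopS(y_2,y_3)),\specialopT(\specialopS(y_1,y_4),\specialopS(y_2,y_4)))$. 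Writing $a=\specialopS(y_1,y_3)$, $b=\specialopS(y_1,y_4)$, $c=\specialopS(y_2,y_3)$, $d=\specialopS(y_2,y_4)$, these two normal forms give the $\mathbb{U}$-equation
\[
  \specialopT(\specialopT(a,b),\specialopT(c,d)) \theoryeq{U} \specialopT(\specialopT(a,c),\specialopT(b,d)).
\]

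Both sides are already separated as a single $\mathbb{T}$-term (the abides shape $\specialopT(\specialopT(-,-),\specialopT(-,-))$) built over the four $\mathbb{S}$-terms $a,b,c,d$, so the essential uniqueness clause of \refdef~\ref{def:composite} applies. The key preliminary step is to check that $a,b,c,d$ are pairwise inequivalent in $\mathbb{S}$. Each putative equality collapses under a unit substitution: for instance $a\theoryeq{S} b$ would give $y_3 \theoryeq{S} y_4$ after setting $y_1 \mapsto e_\specialopS$ and using \ref{ax:sbinary}, while $a\theoryeq{S} d$ would give $y_3 \theoryeq{S} \specialopS(y_2,y_4)$ after the same substitution. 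Both are impossible by \ref{ax:svar1}, which forbids a variable from being provably equal to a term containing other variables, and in particular forbids two distinct variables from being identified. All six pairs are ruled out in this way.

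With pairwise distinctness established, essential uniqueness produces a set $Z$ and maps $\essuniqfunctionA,\essuniqfunctionB$ into $Z$; clause (b) forces $\essuniqfunctionA$ to send the four argument slots of the left-hand term to four distinct elements $\alpha,\beta,\gamma,\delta \in Z$, and clause (d), matching each $\mathbb{S}$-term with its equal counterpart on the other side ($a$ with $a$, $b$ with $b$, and so on), pins down how $\essuniqfunctionB$ relabels the slots of the right-hand term. Clause (a) then yields, purely in $\mathbb{T}$,
\[
  \specialopT(\specialopT(\alpha,\beta),\specialopT(\gamma,\delta)) \theoryeq{T} \specialopT(\specialopT(\alpha,\gamma),\specialopT(\beta,\delta)),
\]
which is exactly the abides equation on the four distinct variables $\alpha,\beta,\gamma,\delta$. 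Hence it holds in a context $\Gamma$ with $\#\Gamma = 4$, directly contradicting \ref{ax:tspecialproperty}. Therefore no composite theory of $\mathbb{S}$ and $\mathbb{T}$ can exist.

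I expect the main obstacle to be the bookkeeping around essential uniqueness: converting the $\mathbb{U}$-equation between the two concrete normal forms into a clean $\mathbb{T}$-equation requires carefully tracking which $\mathbb{S}$-term occupies which slot on each side and verifying, via clauses (b)--(d), that the induced relabelling keeps all four variables apart. The inequivalence of $a,b,c,d$ modulo $\mathbb{S}$ is precisely what guarantees $\#\Gamma = 4$ rather than something smaller, so this is where the argument would silently fail were the unit law \ref{ax:sbinary} and property \ref{ax:svar1} not available.
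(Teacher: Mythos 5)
Your proposal is correct and takes essentially the same approach as the paper: normalise $\specialopS(\specialopT(y_1,y_2),\specialopT(y_3,y_4))$ in two orders via \refthm~\ref{theorem}, apply essential uniqueness to the two separated forms, and clash the resulting $\bb{T}$-equation with \ref{ax:tspecialproperty}. The only difference is the order of the final steps: the paper invokes \ref{ax:tspecialproperty} first to force a collision $\essuniqfunctionA(y'_i)=\essuniqfunctionA(y'_j)$ and then refutes it with a unit substitution and \ref{ax:svar1}, whereas you rule out all collisions up front by proving the four $\bb{S}$-terms pairwise inequivalent (using the same \ref{ax:sbinary}/\ref{ax:svar1} argument) and then contradict \ref{ax:tspecialproperty} directly.
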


\newpage
\begin{proof}
Suppose there exists a composite theory $\bb{U}$. Given \refthm~\ref{theorem}, we compute a separated term equal in $\bb{U}$ to $\specialopS(\specialopT(y_1,y_2), \specialopT(y_3,y_4))$:

\begin{align}
   & \specialopS(\specialopT(y_1,y_2), \specialopT(y_3,y_4)) \nonumber\\
\theoryeq{U}\; & \reason{\text{Substitution}} \nonumber\\
   & \specialopS(\specialopT(y_1,y_2),x_0)[\specialopT(y_3,y_4)/x_0] \nonumber\\
\theoryeq{U}\; & \reason{\text{Equation \eqref{theoremeq1} from \refthm~\ref{theorem}}} \nonumber\\
   & \specialopT(\specialopS(y_1,x_0), \specialopS(y_2,x_0))[\specialopT(y_3,y_4)/x_0] \nonumber\\
\theoryeq{U}\; & \reason{\text{Substitution}} \nonumber\\
   & \specialopT(\specialopS(y_1,\specialopT(y_3,y_4)), \specialopS(y_2,\specialopT(y_3,y_4))) \nonumber\\
\theoryeq{U}\; & \reason{\text{Equation \eqref{theoremeq2} from \refthm~\ref{theorem}}} \nonumber\\
   & \specialopT(\specialopT(\specialopS(y_1,y_3),\specialopS(y_1,y_4)), \specialopT(\specialopS(y_2,y_3),\specialopS(y_2,y_4))) \label{eq:nogoTreeList1}
\end{align}
Notice that we made a choice, taking out the right $\specialopT$ term in
\begin{equation*}
  \specialopS(\specialopT(y_1,y_2), \specialopT(y_3,y_4)) \theoryeq{U} \specialopS(\specialopT(y_1,y_2),x_0)[\specialopT(y_3,y_4)/x_0]
\end{equation*}
rather than the left:
\begin{equation*}
  \specialopS(\specialopT(y_1,y_2), \specialopT(y_3,y_4)) \theoryeq{U} \specialopS(x_0, \specialopT(y_3,y_4))[\specialopT(y_1,y_2)/x_0]
\end{equation*}
The latter option yields:
\begin{align}
    & \specialopS(\specialopT(y_1,y_2), \specialopT(y_3,y_4)) \nonumber \\
\theoryeq{U}\; & \reason{\text{Substitution}} \nonumber\\
    & \specialopS(x_0, \specialopT(y_3,y_4))[\specialopT(y_1,y_2)/x_0] \nonumber\\
\theoryeq{U}\; & \reason{\text{Equation \eqref{theoremeq2} from \refthm~\ref{theorem}}} \nonumber\\
    & \specialopT(\specialopS(x_0,y_3), \specialopS(x_0,y_4))[\specialopT(y_1,y_2)/x_0] \nonumber\\
\theoryeq{U}\; & \reason{\text{Substitution}} \nonumber \\
    & \specialopT(\specialopS(\specialopT(y_1,y_2),y_3), \specialopS(\specialopT(y_1,y_2), y_4)) \nonumber\\
\theoryeq{U}\; & \reason{\text{Equation \eqref{theoremeq1} from \refthm~\ref{theorem}}} \nonumber\\
 & \specialopT(\specialopT(\specialopS(y_1,y_3),\specialopS(y_2,y_3)),\specialopT(\specialopS(y_1,y_4),\specialopS(y_2,y_4)) \label{eq:nogoTreeList2}
\end{align}
Of course, both computations are equally valid, so the terms in \refeqs~\eqref{eq:nogoTreeList1} and \eqref{eq:nogoTreeList2} must be equal:
\begin{align*}
& \specialopT(\specialopT(\specialopS(y_1,y_3),\specialopS(y_1,y_4)),\specialopT(\specialopS(y_2,y_3),\specialopS(y_2,y_4))) \\ \theoryeq{U}\; & \specialopT(\specialopT(\specialopS(y_1,y_3),\specialopS(y_2,y_3)),\specialopT(\specialopS(y_1,y_4),\specialopS(y_2,y_4)))
\end{align*}
Since these are two separated terms that are equal, we can apply the essential uniqueness property, stating that there exist functions:
\begin{align*}
  \essuniqfunctionA: \{y'_1,y'_2,y'_3,y'_4\} \rightarrow Z\\
  \essuniqfunctionB: \{y'_1,y'_2,y'_3,y'_4\} \rightarrow Z
\end{align*}
such that:
\begin{itemize}
  \item Equality in $\bb{T}$:
         \begin{align*}
         & \specialopT(\specialopT(\essuniqfunctionA(y'_1),\essuniqfunctionA(y'_2)), \specialopT(\essuniqfunctionA(y'_3),\essuniqfunctionA(y'_4))) \\ \theoryeq{T}\; &  \specialopT(\specialopT(\essuniqfunctionB(y'_1),\essuniqfunctionB(y'_2)), \specialopT(\essuniqfunctionB(y'_3),\essuniqfunctionB(y'_4)))
         \end{align*}
  \item $\essuniqfunctionA(y'_i) = \essuniqfunctionB(y'_j)$ iff the $\bb{S}$-terms substituted for $y'_i$ and $y'_j$ in \refeqs~\eqref{eq:nogoTreeList1} and~\eqref{eq:nogoTreeList2} are equal.
\end{itemize}
From the second part of essential uniqueness we get that:
\begin{align*}
\essuniqfunctionA(y'_1) & = \essuniqfunctionB(y'_1) & \essuniqfunctionA(y'_3) & = \essuniqfunctionB(y'_2)\\
\essuniqfunctionA(y'_2) & = \essuniqfunctionB(y'_3) & \essuniqfunctionA(y'_4) & = \essuniqfunctionB(y'_4)
\end{align*}
Putting this in the first part then yields:
\begin{align*}
& \specialopT(\specialopT(\essuniqfunctionA(y'_1),\essuniqfunctionA(y'_2)), \specialopT(\essuniqfunctionA(y'_3),\essuniqfunctionA(y'_4))) \\ \theoryeq{T}\; & \specialopT(\specialopT(\essuniqfunctionA(y'_1),\essuniqfunctionA(y'_3)), \specialopT(\essuniqfunctionA(y'_2),\essuniqfunctionA(y'_4)))
\end{align*}
And so by \refpro~\ref{ax:tspecialproperty}:  $\#\var(\specialopT(\specialopT(\essuniqfunctionA(y'_1),\essuniqfunctionA(y'_2)), \specialopT(\essuniqfunctionA(y'_3),\essuniqfunctionA(y'_4)))) \leq 3$. So there must be $i,j$ such that $i \neq j$ and $\essuniqfunctionA(y'_i) = \essuniqfunctionA(y'_j)$. Suppose without loss of generality that $\essuniqfunctionA(y'_1) = \essuniqfunctionA(y'_2)$. Then we must have that $\specialopS(y_1,y_3) \theoryeq{S} \specialopS(y_1,y_4)$. But then also:
\begin{align*}
   & y_1 \\
   \theoryeq{S} \; & \reason{ e_\specialopS \text{ is a unit for } \specialopS } \\
   & \specialopS(y_1, e_\specialopS) \\
   \theoryeq{S} \; & \reason{\text{Substitution}} \\
   &  \specialopS(y_1, y_3)[e_\specialopS/y_3] \\
   \theoryeq{S} \; & \reason{\specialopS(y_1, y_3) = \specialopS(y_1, y_4)} \\
   & \specialopS(y_1, y_4)[e_\specialopS/y_3] \\
   \theoryeq{S} \; & \reason{\text{Substitution: no } y_3 \text{ in } \specialopS(y_1, y_4) } \\
   & \specialopS(y_1, y_4)
\end{align*}

But then, by \refpro~\ref{ax:svar1}: $\{y_1\} \vdash \specialopS(y_1, y_4)$, and so we must have $y_4 = y_1$, contradiction.
The same argument holds for any other $i,j$ pair. Therefore, the existence of a composite theory leads to a contradiction. In other words, no such composite theory exists.
\end{proof}

\begin{corollary}
 By \refcor~\ref{cor:noComposite-noDistlaw}, if the algebraic theories $\bb{S}$ and $\bb{T}$ corresponding to monads $S$ and $T$ satisfy \ref{ax:svar0}, \ref{ax:svar1}, \ref{ax:sunits}, and \ref{ax:sbinary} and \ref{ax:tvar0}, \ref{ax:tvar1}, \ref{ax:tconst}, \ref{ax:tunit}, and \ref{ax:tspecialproperty} respectively, then there does not exist a distributive law $ST \Rightarrow TS$.
\end{corollary}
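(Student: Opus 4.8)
The plan is to assume, for contradiction, that a composite theory $\bb{U}$ of $\bb{S}$ and $\bb{T}$ exists, and then to exhibit a single $\bb{U}$-term that can be put into separated form in two incompatible ways. Since all the hypotheses of \refthm~\ref{theorem} hold, I may freely use that $\specialopS$ distributes over $\specialopT$ on both sides, i.e.\ \refeqs~\eqref{theoremeq1} and~\eqref{theoremeq2}. The term I would probe is the doubly-nested expression $\specialopS(\specialopT(y_1,y_2),\specialopT(y_3,y_4))$, in which both arguments of the outer $\specialopS$ are themselves binary $\specialopT$-terms. The point of this choice is that there are two symmetric routes to a fully separated form, and comparing them will expose an abides instance.

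The core of the argument is to compute this separated form twice. Distributing the right factor $\specialopT(y_3,y_4)$ outward first (treating the term as $\specialopS(\specialopT(y_1,y_2),x_0)[\specialopT(y_3,y_4)/x_0]$ and applying \eqref{theoremeq1}, then clearing the inner occurrences with \eqref{theoremeq2}) yields a $\specialopT$-tree whose four $\bb{S}$-leaves are $\specialopS(y_1,y_3),\specialopS(y_1,y_4),\specialopS(y_2,y_3),\specialopS(y_2,y_4)$. Distributing the left factor $\specialopT(y_1,y_2)$ first yields a $\specialopT$-tree with the same four leaves but with the middle two transposed. Both derivations are legitimate chains of equalities in $\bb{U}$, so the two separated terms are provably equal in $\bb{U}$.

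Next I would apply essential uniqueness from \refdef~\ref{def:composite} to this equality of two separated terms. Reading off which $\bb{S}$-leaves are provably equal fixes the correspondence between the variable labels of the two $\bb{T}$-shells, and feeding this correspondence back into the shells produces an equality in $\bb{T}$ of the shape $\specialopT(\specialopT(a,b),\specialopT(c,d)) \theoryeq{T} \specialopT(\specialopT(a,c),\specialopT(b,d))$ — exactly an instance of the abides equation. Property \ref{ax:tspecialproperty} forbids this with four distinct variables, so at most three of the labels are distinct; hence two of the four $\bb{S}$-leaves must coincide provably in $\bb{S}$, say $\specialopS(y_1,y_3) \theoryeq{S} \specialopS(y_1,y_4)$.

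The finishing step is routine: substituting $y_3 \mapsto e_\specialopS$ and using that $e_\specialopS$ is a unit for $\specialopS$ (\refpro~\ref{ax:sbinary}) collapses this to $y_1 \theoryeq{S} \specialopS(y_1,y_4)$, whereupon \refpro~\ref{ax:svar1} forces $y_4 = y_1$, contradicting the distinctness of the $y_i$; the remaining pairings of coinciding leaves are dispatched identically. I expect the main obstacle to be the central step: one must verify that both competing expressions are genuinely in separated normal form — a $\bb{T}$-term built purely from $\bb{S}$-terms, with no residual $\bb{T}$-structure buried inside the leaves — so that essential uniqueness is actually applicable, and one must correctly extract the induced identification of leaf-variables in order to recognise the resulting $\bb{T}$-equation as precisely the abides pattern ruled out by \ref{ax:tspecialproperty}.
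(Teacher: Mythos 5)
Your proposal is correct and follows essentially the same route as the paper's own proof of \refthm~\ref{thm:nogoTreeList}: the same doubly-nested term $\specialopS(\specialopT(y_1,y_2),\specialopT(y_3,y_4))$ separated in two ways via \refeqs~\eqref{theoremeq1} and~\eqref{theoremeq2}, essential uniqueness yielding an abides instance in $\bb{T}$, property \ref{ax:tspecialproperty} forcing two $\bb{S}$-leaves to coincide, and the unit substitution $y_3 \mapsto e_\specialopS$ together with \ref{ax:svar1} producing the contradiction, after which \refcor~\ref{cor:noComposite-noDistlaw} transfers the non-existence of a composite theory to the non-existence of a distributive law. The concern you flag about verifying separated form is handled in the paper exactly as you anticipate: both computed terms are manifestly $\bb{T}$-terms over $\bb{S}$-terms, so essential uniqueness applies directly.
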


\begin{example}[No Distributive Law for the List Monad over Itself]
  This finally settles the question of whether the list monad distributes over itself, posed by \citet{ManesMulry2007,ManesMulry2008}. As mentioned in \refex~\ref{ex:STproperties}, the algebraic theory corresponding to the list monad satisfies all of \ref{ax:svar0}, \ref{ax:svar1}, \ref{ax:sunits}, \ref{ax:sbinary}, \ref{ax:tvar0}, \ref{ax:tvar1}, \ref{ax:tconst}, \ref{ax:tunit}, and \ref{ax:tspecialproperty}. Hence there is no distributive law for the list monad over itself.
\end{example}

\begin{remark}
  Although there is no distributive law for the list monad over itself, the functor $LL$ does still carry a monad structure. We are very grateful to Bartek Klin for pointing this out to us. The monad structure on $LL$ can be described as follows:
  \begin{itemize}
    \item There is a distributive law for the list monad over the non-empty list monad $LL^+ \Rightarrow L^+L$ \citep{ManesMulry2007}.
    \item There is a distributive law for the resulting monad over the lift monad $(L^+L)(-)_\bot \Rightarrow (-)_\bot (L^+L)$, derived from general principles \citep{ManesMulry2007}.
    \item The resulting functor $(-)_\bot (L^+L)$ is isomorphic to $LL$, and carries a monad structure. Hence $LL$ carries a monad structure, but not one that can be derived from a distributive law $LL \Rightarrow LL$.
  \end{itemize}
\end{remark}

\begin{example}[No Distributive Law of the Binary Tree Monad over Itself]
The algebraic theory corresponding to the binary tree monad satisfies all of \ref{ax:svar0}, \ref{ax:svar1}, \ref{ax:sunits}, \ref{ax:sbinary}, \ref{ax:tvar0}, \ref{ax:tvar1}, \ref{ax:tconst}, \ref{ax:tunit}, and \ref{ax:tspecialproperty}. And so, like the list monad, there is no distributive law for the binary tree monad over itself.
\end{example}

\begin{example}[Ternary and $n$-ary Trees]
The algebraic theory corresponding to ternary trees has a signature consisting of one constant $e$ and one ternary operation $\phi$, with the following unit equations:
\begin{align*}
\phi(x,e,e) & = x & \phi(e,x,e) & = x & \phi(e,e,x) & = x
\end{align*}
These trees satisfy all of \ref{ax:svar0}, \ref{ax:svar1}, \ref{ax:sunits}, \ref{ax:sbinary}, \ref{ax:tvar0}, \ref{ax:tvar1}, \ref{ax:tconst}, \ref{ax:tunit}, and \ref{ax:tspecialproperty}. To see this, take for properties requiring a binary term the term $t(x,y)$, defined as:
\[
t(x,y) = \phi(x,y,e)
\]
Then $t(x,e) = \phi(x,e,e) = x$ and $t(e,x) = \phi(e,x,e) = x$ so $t$ satisfies \ref{ax:sbinary} and \ref{ax:tunit}, and since there are no other equations, $t$ also satisfies \ref{ax:tspecialproperty}. And so by \refthm~\ref{thm:nogoTreeList} there is no distributive law for the ternary tree monad over itself.

This idea generalises to $n$-ary trees, meaning that for these too, there is no distributive law. In fact, there is no distributive law for any $n$-ary tree monad over any $m$-ary tree monad $(m,n \geq 2)$.
\end{example}

There are many more examples for \refthm~\ref{thm:nogoTreeList}, some of which will be explored in \refsec~\ref{section:boom}. For now, we illustrate the importance of \refpro~\ref{ax:tspecialproperty} with a monad that has all properties required by \refthm~\ref{thm:nogoTreeList}, except for \ref{ax:tspecialproperty}.

\begin{counter}[Multiset Monad]\label{counter:multisetNolist}
The multiset monad is closely related to the list monad, with an algebraic theory having just one extra equation compared to the list monad: commutativity. Because of this equation, the theory does not have \refpro~\ref{ax:tspecialproperty}, but it still has properties \ref{ax:svar0}, \ref{ax:svar1}, \ref{ax:sunits}, \ref{ax:sbinary}, \ref{ax:tvar0}, \ref{ax:tvar1}, \ref{ax:tconst}, and \ref{ax:tunit}. As we have seen in \refex~\ref{ex:uniquedistlaws}, there is a unique distributive law for the multiset monad over itself.
\end{counter}

\newpage
\subsection{Yet Another No-Go Theorem Caused by Idempotence}
In \refsec~\ref{Plotkin} we learned that having idempotent terms in both algebraic theories could prevent the existence of a distributive law. We will now see that together with \refthm~\ref{theorem}, having an idempotent term in $\bb{S}$ is enough. This fills in the remaining unknown combination of the multiset and powerset monad: there is no distributive law $PM \Rightarrow MP$.
\begin{theorem}[No-Go Theorem: Idempotence and Units] \label{thm:nogoTreeIdem}
  Let $\bb{S}$ and $\bb{T}$ be algebraic theories satisfying \ref{ax:svar0}, \ref{ax:svar1}, \ref{ax:sunits}, \ref{ax:sbinary}, and \ref{ax:sidem} and \ref{ax:tvar0}, \ref{ax:tvar1}, \ref{ax:tconst}, and \ref{ax:tunit} respectively. Then there exists no composite theory of $\bb{S}$ and $\bb{T}$.
\end{theorem}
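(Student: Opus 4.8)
The plan is to piggyback on \refthm~\ref{theorem}: the hypotheses here are precisely those of the Times over Plus theorem, together with the one extra assumption \ref{ax:sidem} that $\specialopS$ is idempotent. So, assuming for contradiction that a composite theory $\bb{U}$ of $\bb{S}$ and $\bb{T}$ exists, I may freely use both distributivity equations \eqref{theoremeq1} and \eqref{theoremeq2}. The strategy is to compute a separated form of the term $\specialopS(\specialopT(y_1,y_2),\specialopT(y_1,y_2))$ in two different ways and then feed the resulting identity into the essential uniqueness property.

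First I would expand $\specialopS(\specialopT(y_1,y_2),\specialopT(y_1,y_2))$ using the distributive laws. Applying \eqref{theoremeq1} with $x_0=\specialopT(y_1,y_2)$, then \eqref{theoremeq2} to each of the two resulting factors, and finally collapsing the diagonal occurrences $\specialopS(y_i,y_i)$ to $y_i$ using idempotence \ref{ax:sidem}, gives
\begin{equation*}
  \specialopS(\specialopT(y_1,y_2),\specialopT(y_1,y_2)) \theoryeq{U} \specialopT\bigl(\specialopT(y_1,\specialopS(y_1,y_2)),\,\specialopT(\specialopS(y_2,y_1),y_2)\bigr),
\end{equation*}
a $\bb{T}$-term built over the four $\bb{S}$-terms $y_1$, $\specialopS(y_1,y_2)$, $\specialopS(y_2,y_1)$, $y_2$. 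On the other hand, idempotence \ref{ax:sidem} applied directly yields the trivially separated form $\specialopS(\specialopT(y_1,y_2),\specialopT(y_1,y_2)) \theoryeq{U} \specialopT(y_1,y_2)$, a $\bb{T}$-term over the two $\bb{S}$-terms $y_1,y_2$. Equating these two separated expressions is exactly the input required for the essential uniqueness clause of \refdef~\ref{def:composite}.

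Applying essential uniqueness to $\specialopT(y_1,y_2) \theoryeq{U} \specialopT(\specialopT(y_1,\specialopS(y_1,y_2)),\specialopT(\specialopS(y_2,y_1),y_2))$ produces a set of variables carrying images $\alpha,\beta,\gamma,\delta$ of the $\bb{S}$-terms $y_1,y_2,\specialopS(y_1,y_2),\specialopS(y_2,y_1)$, together with the pure $\bb{T}$-equation
\begin{equation*}
  \specialopT(\alpha,\beta) \theoryeq{T} \specialopT\bigl(\specialopT(\alpha,\gamma),\,\specialopT(\delta,\beta)\bigr).
\end{equation*}
The essential bookkeeping is that $\alpha,\beta,\gamma$ are pairwise distinct: by \ref{ax:svar1} the $\bb{S}$-term $\specialopS(y_1,y_2)$, whose variable set is $\{y_1,y_2\}$, is $\bb{S}$-distinct from each of the variables $y_1$ and $y_2$, so the matching clauses of essential uniqueness force $\gamma\neq\alpha$ and $\gamma\neq\beta$, while $\alpha\neq\beta$ because $y_1\neq y_2$. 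Substituting $\alpha\mapsto e_\specialopT$ and simplifying with the unit law \ref{ax:tunit} then collapses the equation to $\beta \theoryeq{T} \specialopT(\gamma,\specialopT(\delta,\beta))$. The right-hand side contains the variable $\gamma\neq\beta$, which contradicts \ref{ax:tvar1}. Hence no composite theory of $\bb{S}$ and $\bb{T}$ can exist.

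The main obstacle is the essential-uniqueness bookkeeping in the third step: one must argue carefully, via \ref{ax:svar1}, that the image of $\specialopS(y_1,y_2)$ is genuinely a different variable from the images of $y_1$ and $y_2$, since this distinctness is precisely what makes the final appeal to \ref{ax:tvar1} bite. Everything else — the expansion via the two distributivity equations and the unit and idempotence simplifications — is routine equational manipulation.
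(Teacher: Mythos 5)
Your proposal is correct, and its skeleton coincides with the paper's own proof: the same expansion of $\specialopS(\specialopT(y_1,y_2),\specialopT(y_1,y_2))$ via the two distributivity equations of \refthm~\ref{theorem} plus idempotence \ref{ax:sidem}, yielding the identity $\specialopT(y_1,y_2) \theoryeq{U} \specialopT(\specialopT(y_1,\specialopS(y_1,y_2)),\specialopT(\specialopS(y_2,y_1),y_2))$, followed by the same appeal to essential uniqueness applied to this pair of separated terms. The only divergence is the endgame. The paper substitutes $e_\specialopT$ for \emph{both} $\alpha$ and $\beta$, reduces with \ref{ax:tunit}, and invokes \ref{ax:tvar0} to force $\gamma,\delta \in \{\alpha,\beta\}$, whence essential uniqueness gives $\specialopS(y_1,y_2) \theoryeq{S} y_1$ or $\specialopS(y_1,y_2) \theoryeq{S} y_2$, contradicting \ref{ax:svar1}. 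You run the dual argument: \ref{ax:svar1} first excludes $\gamma$ from $\{\alpha,\beta\}$, and then substituting $e_\specialopT$ for $\alpha$ alone and appealing to \ref{ax:tvar1} delivers the contradiction; this is equally sound and arguably isolates the role of \ref{ax:tvar1} more cleanly. Two small points of bookkeeping you should close. First, your substitution step tacitly assumes $\delta \neq \alpha$ when it leaves $\specialopT(\delta,\beta)$ intact; this is harmless, since either $\delta \neq \alpha$ follows by the same \ref{ax:svar1} argument applied to $\specialopS(y_2,y_1)$, or, if $\delta = \alpha$, the right-hand side collapses by \ref{ax:tunit} to $\specialopT(\gamma,\beta)$, which still contains $\gamma \neq \beta$ and so still contradicts \ref{ax:tvar1}. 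Second, your claim that $\var(\specialopS(y_1,y_2)) = \{y_1,y_2\}$ (needed so that $\{y_1\} \vdash \specialopS(y_1,y_2)$ fails) deserves a line: both argument positions must genuinely occur in the binary term $\specialopS$, because otherwise the unit law \ref{ax:sbinary} together with \ref{ax:svar0} would equate a variable with a closed term. Neither point is a genuine gap; the paper glosses comparable details at the same level.
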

\begin{proof}
  Suppose such a composite theory $\bb{U}$ exists. Then we have:
  \begin{align*}
     & \specialopT(y_1,y_2) \\
  \theoryeq{U}\; & \reason{\text{\ref{ax:sidem}: } \specialopS \text{ is idempotent} } \\
     & \specialopS(\specialopT(y_1,y_2), \specialopT(y_1,y_2)) \\
  \theoryeq{U}\; & \reason{\text{Substitution}} \\
     & \specialopS(\specialopT(y_1,y_2),x_0)[\specialopT(y_1,y_2)/x_0] \\
  \theoryeq{U}\; & \reason{\text{Equation \eqref{theoremeq1} from \refthm~\ref{theorem}}} \\
     & \specialopT(\specialopS(y_1,x_0), \specialopS(y_2,x_0))[\specialopT(y_1,y_2)/x_0] \\
  \theoryeq{U}\; & \reason{\text{Substitution}} \\
     & \specialopT(\specialopS(y_1,\specialopT(y_1,y_2)), \specialopS(y_2,\specialopT(y_1,y_2))) \\
  \theoryeq{U}\; & \reason{\text{Equation \eqref{theoremeq2} from \refthm~\ref{theorem}}} \\
     & \specialopT(\specialopT(\specialopS(y_1,y_1),\specialopS(y_1,y_2)), \specialopT(\specialopS(y_2,y_1),\specialopS(y_2,y_2))) \\
  \theoryeq{U}\; & \reason{\text{\ref{ax:sidem}: } \specialopS \text{ is idempotent} } \\
     & \specialopT(\specialopT(y_1,\specialopS(y_1,y_2)), \specialopT(\specialopS(y_2,y_1),y_2))
  \end{align*}

  So from the essential uniqueness property, we may conclude that there are functions $\essuniqfunctionA: \{y_1,y_2\} \mapsto Z, \essuniqfunctionB: \{y'_1,y'_2,y'_3,y'_4\} \mapsto Z$ such that:
  \begin{align*}
    \specialopT(\essuniqfunctionA(y_1), \essuniqfunctionA(y_2)) & \theoryeq{T} \specialopT(\specialopT(\essuniqfunctionB(y'_1),\essuniqfunctionB(y'_2)), \specialopT(\essuniqfunctionB(y'_3), \essuniqfunctionB(y'_4)))
  \end{align*}
  And $\essuniqfunctionA(y_i) = \essuniqfunctionB(y'_j)$ if and only if the $\bb{S}$-terms substituted for $y_i$ and $y'_j$ in $\specialopT(y_1,y_2)$ and $\specialopT(\specialopT(y_1,\specialopS(y_1,y_2)), \specialopT(\specialopS(y_2,y_1),y_2))$ are equal. From this we immediately get:
  \begin{align*}
   \essuniqfunctionA(y_1) & = \essuniqfunctionB(y'_1) \\
   \essuniqfunctionA(y_2) & = \essuniqfunctionB(y'_4)
  \end{align*}
   Also, from \refpro~\ref{ax:tvar1} we know $\essuniqfunctionA(y_1) \neq \essuniqfunctionA(y_2)$.

   We show that $\{\essuniqfunctionB(y'_2), \essuniqfunctionB(y'_3)\} \subseteq \{\essuniqfunctionA(y_1),\essuniqfunctionA(y_2)\}$. Since we have:
   \[
   \specialopT(\essuniqfunctionA(y_1), \essuniqfunctionA(y_2)) \theoryeq{T} \specialopT(\specialopT(\essuniqfunctionB(y'_1), \essuniqfunctionB(y'_2)), \specialopT(\essuniqfunctionB(y'_3),\essuniqfunctionB(y'_4)))
   \]
   We also have:
   \begin{align*}
   & \specialopT(\essuniqfunctionA(y_1), \essuniqfunctionA(y_2))[e_\specialopT/\essuniqfunctionA(y_1),e_\specialopT/\essuniqfunctionA(y_2)] \\
   & \theoryeq{T} \specialopT(\specialopT(\essuniqfunctionB(y'_1), \essuniqfunctionB(y'_2)), \specialopT(\essuniqfunctionB(y'_3),\essuniqfunctionB(y'_4)))[e_\specialopT/\essuniqfunctionA(y_1),e_\specialopT/\essuniqfunctionA(y_2)] \\
   \Rightarrow \; & \reason{\text{Substitution, and } \essuniqfunctionA(y_1) = \essuniqfunctionB(y'_1), \essuniqfunctionA(y_2) = \essuniqfunctionB(y'_4)} \\
   & \specialopT(e_\specialopT, e_\specialopT) \theoryeq{T} \specialopT(\specialopT(e_\specialopT, \essuniqfunctionB(y'_2)), \specialopT(\essuniqfunctionB(y'_3),e_\specialopT))[e_\specialopT/\essuniqfunctionA(y_1),e_\specialopT/\essuniqfunctionA(y_2)] \\
   \Rightarrow \; & \reason{\text{\ref{ax:tunit}: }e_\specialopT \text{ is the unit of } \specialopT} \\
   & e_\specialopT \theoryeq{T} \specialopT(\essuniqfunctionB(y'_2),\essuniqfunctionB(y'_3))[e_\specialopT/\essuniqfunctionA(y_1),e_\specialopT/\essuniqfunctionA(y_2)]
   \end{align*}

   So by \ref{ax:tvar0}, $\var(\specialopT(\essuniqfunctionB(y'_2), \essuniqfunctionB(y'_3))[e_\specialopT/\essuniqfunctionA(y_1),e_\specialopT/\essuniqfunctionA(y_2)]) = \emptyset$. So we must have:
   \begin{equation*}
     \{\essuniqfunctionB(y'_2), \essuniqfunctionB(y'_3)\} \subseteq \{\essuniqfunctionA(y_1),\essuniqfunctionA(y_2)\}
   \end{equation*}
   But then, by the second part of the essential uniqueness property:
   \begin{equation*}
     \specialopS(y_1,y_2) \theoryeq{S} y_1 \quad \text{or} \quad \specialopS(y_1,y_2) \theoryeq{S} y_2
   \end{equation*}
   Both contradict \ref{ax:svar1}. Therefore, the composite theory $\bb{U}$ cannot exist.
\end{proof}

\begin{corollary}
  By \refcor~\ref{cor:noComposite-noDistlaw}, if algebraic theories $\bb{S}$ and $\bb{T}$ corresponding to monads $S$ and $T$ satisfy \ref{ax:svar0}, \ref{ax:svar1}, \ref{ax:sunits}, \ref{ax:sbinary}, and \ref{ax:sidem} and \ref{ax:tvar0}, \ref{ax:tvar1}, \ref{ax:tconst}, and \ref{ax:tunit} respectively, there does not exist a distributive law $ST \Rightarrow TS$.
\end{corollary}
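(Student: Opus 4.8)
The plan is to deduce this statement immediately from its algebraic counterpart, \refthm~\ref{thm:nogoTreeIdem}, together with the translation between composite theories and distributive laws supplied by \refthm~\ref{thm:distlaw-vs-compositetheory} and recorded in contrapositive form as \refcor~\ref{cor:noComposite-noDistlaw}. No fresh equational reasoning should be required; the entire content is a matching of hypotheses followed by a single appeal to the Pir\'og--Staton correspondence.

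First I would observe that the equational assumptions are phrased precisely so as to coincide with the hypotheses of \refthm~\ref{thm:nogoTreeIdem}: the theory $\bb{S}$ is assumed to satisfy \ref{ax:svar0}, \ref{ax:svar1}, \ref{ax:sunits}, \ref{ax:sbinary}, and \ref{ax:sidem}, while $\bb{T}$ satisfies \ref{ax:tvar0}, \ref{ax:tvar1}, \ref{ax:tconst}, and \ref{ax:tunit}. Since these are exactly the properties demanded there, \refthm~\ref{thm:nogoTreeIdem} applies without modification and yields that no composite theory of $\bb{S}$ and $\bb{T}$ can exist.

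Second, because $S$ and $T$ are by hypothesis the free model monads presented by $\bb{S}$ and $\bb{T}$, \refcor~\ref{cor:noComposite-noDistlaw} applies verbatim: the non-existence of a composite of the presenting theories rules out any distributive law of type $ST \Rightarrow TS$ for the presented monads. Chaining the two steps gives the desired conclusion.

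The honest answer to where the difficulty lies is that it lies entirely upstream of this corollary. The substantive argument is carried out in \refthm~\ref{thm:nogoTreeIdem}---which in turn rests on the ``times over plus'' behaviour forced by \refthm~\ref{theorem} and on the variable-tracking of \reflem~\ref{t_0-diamond}---and in the correspondence theorem underpinning \refcor~\ref{cor:noComposite-noDistlaw}. At the level of the corollary there is nothing left to calculate; the only points needing care are the clerical ones of verifying that the listed properties reproduce the hypotheses of \refthm~\ref{thm:nogoTreeIdem} and that the two monads really are presented by the theories to which that theorem is applied. As the headline instance, taking $\bb{S}$ to be the join-semilattice presentation of the powerset monad $P$ (so that idempotence \ref{ax:sidem} holds) and $\bb{T}$ the commutative-monoid presentation of the multiset monad $M$ specialises the statement to the non-existence of a distributive law $PM \Rightarrow MP$.
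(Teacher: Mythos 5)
Your proposal is correct and matches the paper's (implicit) proof exactly: the corollary follows by applying \refthm~\ref{thm:nogoTreeIdem} to rule out a composite theory of $\bb{S}$ and $\bb{T}$, and then invoking \refcor~\ref{cor:noComposite-noDistlaw} to exclude any distributive law $ST \Rightarrow TS$, which is precisely why the paper states the corollary with the citation built in and offers no further argument. Your identification of the headline instance $PM \Rightarrow MP$ also agrees with the paper's own example following the corollary.
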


\begin{example}[Again: No Distributive Law for the Powerset over Itself]
 Recall from \refex~\ref{ex:STproperties} that the theory of join semilattices, corresponding to the powerset monad, satisfies all of \ref{ax:svar0}, \ref{ax:svar1}, \ref{ax:sunits}, \ref{ax:sbinary}, \ref{ax:sidem}, \ref{ax:tvar0}, \ref{ax:tvar1}, \ref{ax:tconst}, and \ref{ax:tunit}. Therefore, there is no distributive law for the powerset monad over itself. This was already shown by \citet{KlinSalamanca2018} using similar methods as in \refsec~\ref{Plotkin}. Theorem \ref{thm:nogoTreeIdem} gives a second, independent proof of this fact.
\end{example}

\begin{example}[Filling in the Gap: Multiset and Powerset Monads]
 From \citet{ManesMulry2007} we know that there are distributive laws $MM \Rightarrow MM$ and $MP \Rightarrow PM$, where $M$ is the multiset monad and $P$ the powerset monad. So the only combination of multiset and powerset that is not covered by previous theorems is $PM \Rightarrow MP$. Theorem \ref{thm:nogoTreeIdem} fills this gap, saying there is no distributive law of that type.
\end{example}

\begin{counter}[The Sweet Spot]
 We come back to the multiset monad once more. In \refcounter~\ref{counter:multisetNolist} we saw that the algebraic theory corresponding to the multiset monad had one extra equation compared to the theory for the list monad: commutativity. Because of this equation, \refpro~\ref{ax:tspecialproperty} did not hold, and therefore \refthm~\ref{thm:nogoTreeList} did not apply.

 There is a similar relation between the multiset monad and the powerset monad. Compared to the powerset monad, the theory corresponding to the multiset monad \emph{lacks} just one equation: idempotence, which is exactly what \refpro~\ref{ax:sidem} requires. The lack of this equation in the theory for the multiset monad therefore means that \refthm~\ref{thm:nogoTreeIdem} does not apply to multiset either. So multiset holds a sort of `sweet spot' in between the two no-go theorems, where a distributive law still can and does exist.
\end{counter}

\section{Algebraic Method vs Direct Verification}\label{sec:comparison}
We have now seen several proofs of no-go theorems using the algebraic method. These proofs simplify calculations and can be more insightful than the more direct approach showing the distributive law axioms cannot be satisfied. In this section we point out some of the connections between the algebraic method and the traditional proofs, and compare the two strategies.

\subsection{Translation between Categorical and Algebraic Proofs}
The two main ingredients of the algebraic proofs are substitution and the separation and essential uniqueness axioms of composite theories (see \refdef~\ref{def:composite}).
The categorical versions of the proofs use naturality and Beck's four defining axioms of distributive laws: two unit axioms and two multiplication axioms. We show how each of these correspond to specific steps in the algebraic proofs.

\begin{itemize}
\item Naturality of the distributive law embodies that variable labels are opaque, and that the choice of the set of variable labels is not significant. For the algebraic perspective, naturality implies that substituting variables for variables preserves equations.
\item The unit axioms are even simpler than naturality in the algebraic setting. They state that the variable~$x$, seen as an~$\bb{S}$-term, with $\bb{T}$-term $t$ substituted into it, is equal to the term~$t$ with all its variables seen as~$\bb{S}$-term substitutions. This simply means that:
    \[
     x[t/x] \theoryeq{U} t[x_i/x_i]
    \]
    Algebraically, this is trivial and so the unit law is handled implicitly in the algebraic proofs.
\item The multiplication axioms do not show up directly in the algebraic proofs, but they can be spotted by the careful observer. Whenever there is a substitution of $\bb{T}$-terms in $\bb{S}$-terms and all terms involved are more complicated than just variables, that would be an application of the multiplication axioms in a categorical proof. Using separation, the fact that in a composite theory all terms can be uniquely written as a $\bb{T}$-term of $\bb{S}$-terms, corresponds to following one of the paths in the multiplication axioms. A good example is the following compressed line of reasoning taken from the proof of \refthm~\ref{thm:nogoTreeList}, which clearly echoes the pattern $STT \rightarrow TST \rightarrow TTS$ seen in the second multiplication axiom:
    \begin{align*}
     & \specialopS(x_0, \specialopT(y_3,y_4))[\specialopT(y_1,y_2)/x_0] \\
    \theoryeq{U}\; & \specialopT(\specialopS(\specialopT(y_1,y_2),y_3), \specialopS(\specialopT(y_1,y_2), y_4)) \\
    \theoryeq{U}\; & \specialopT(\specialopT(\specialopS(y_1,y_3),\specialopS(y_2,y_3)),\specialopT(\specialopS(y_1,y_4),\specialopS(y_2,y_4))
    \end{align*}
\end{itemize}

Plotkin's original proof~\citep{VaraccaWinskel2006} uses the naturality of distributive laws and Beck's unit axioms, but~\emph{not} the multiplication axioms. We see this echoed in the proof of \refthm~\ref{thm:plotkin-1}, which only uses substitutions of variables for variables. The theorems in \refsec~\ref{section:beyondPlotkin} frequently use substitutions of more complicated terms, corresponding to the use of the multiplication axioms in a categorical proof. This once again shows the differences between the techniques used in the proofs of \refsec~\ref{Plotkin} and~\ref{section:beyondPlotkin}.

\subsection{Comparing Categorical and Algebraic Proofs}
In Plotkin's original proof, certain cleverly chosen terms are chased around naturality diagrams. As part of these arguments, we implicitly need to understand inverse images of the action of functions under the application of~\emph{both monads}. If we think about the monads involved as being presented by two algebraic theories, this will involve reasoning about inverse images of equivalence classes of terms in one theory with variables labelled by equivalence classes of terms in a second theory. This clarifies why generalizing this argument directly turns out to be awkward to do, and helps to explain the need to restrict to the powerset monad in previous generalizations of this approach.

The proofs in \refsec~\ref{section:beyondPlotkin} implicitly use three iterated applications of the monads involved, and so would incur even more complexity. Dealing with these proofs using three layers of equivalence classes and inverse images would become simply unmanageable. By using an explicitly algebraic approach, we can reason using equational logic, avoiding any explicit use of equivalence classes and difficulties with inverse images.

\section{Case Study: Boom Hierarchy}\label{section:boom}

The Boom hierarchy is a family of monadic data structures related by their corresponding algebraic theories. It contains the monads tree, list, multiset and powerset. When we look at their algebraic theories we see that they all have the same signature, consisting of a constant and a binary operation. Their axioms grow increasingly, as shown in \reftab~\ref{Boom1}.

\begin{table}[h]
  \caption{The Boom hierarchy}\label{Boom1}
  \begin{minipage}{\textwidth}
    \begin{tabular}{lcccc}
    \hline\hline
    theory \;\; & unit\;\; & associative \;\; & commutative \;\; & idempotent \\ \hline
    tree \;\; & Y  & N & N & N \\
    list \;\; & Y & Y & N & N \\
    multiset \;\; & Y & Y & Y & N \\
    powerset \;\; & Y & Y & Y & Y \\
    \hline\hline
    \end{tabular}
    \vspace{-1\baselineskip}
  \end{minipage}
\end{table}

In this section, we will consider different compositions of these monads and find out which compositions yield new monads via distributive laws. Studying the patterns of existing and non-existing distributive laws for this hierarchy is what originally inspired the search for the various no-go theorems presented in this paper. We now use the same hierarchy to demonstrate both the scope and limitations of each of the no-go theorems, and their relation to some of the positive results about distributive laws from the literature. We first look at the original Boom hierarchy. Later we expand the hierarchy to include more exotic data structures that are not used as much as those from the original hierarchy, but do give us a better overview of the scope of the various theorems about distributive laws. Similar expansions of the Boom hierarchy have been studied in \citet{UUSTALU2016}.

\begin{remark}
  The Boom hierarchy is named after the Dutchman H.J. Boom. Lambert Meertens is the first to mention this hierarchy in the literature \citep{Meertens1986}, and he attributes the idea to Boom. Also, `boom' happens to be the Dutch word for tree.
\end{remark}

\subsection{Original Boom Hierarchy}

For $S,T$ each being one of the monads tree, list, multiset or powerset, we have a complete picture of whether there exists a distributive law $ST \Rightarrow TS$, shown in \reftab~\ref{BoomDistributive1}. The positive results involve the monads multiset or powerset. These monads are both commutative, and hence they combine well with monads described by linear equations, according to \citet[Theorem 4.3.4]{ManesMulry2007}. Our \refthm~\ref{theorem} proves that these distributive laws are unique. The negative result for $PP \Rightarrow PP$ was already shown by \citet{KlinSalamanca2018} and is confirmed by both our \refthms~\ref{thm:plotkin-1} and \ref{thm:nogoTreeIdem}. The other negative results follow from either \refthm~\ref{thm:nogoTreeList} or \ref{thm:nogoTreeIdem}, and sometimes both, such as the non-existence of a distributive law $PL \Rightarrow LP$.

\begin{table}[h!]
  \caption{Distributive laws for the Boom hierarchy}\label{BoomDistributive1}
    \begin{minipage}{\textwidth}
    \vspace{1\baselineskip}
    Overview of the existence of distributive laws $[row][column] \Rightarrow [column][row]$ (Y/N), with the monads from the Boom hierarchy: tree (\treemonad), list (L), multiset (M), and powerset (P). In each case, the theorems confirming the results are indicated.

    \begin{tabular}{lllll}
    \hline\hline
     \;\; & \treemonad\;\;\;\; & L \;\;\;\; & M \;\;\;\; & P \\ \hline
    \treemonad \;\;\;\; & N\footnote{\label{ftn:boom:list} Theorem \ref{thm:nogoTreeList}} & N$^{\ref{ftn:boom:list}}$ & Y\footnote{\label{ftn:boom:MM} \citet[Theorem 4.3.4]{ManesMulry2007}} & Y$^{\ref{ftn:boom:MM}}$ \\
    L \;\;\;\; & N$^{\ref{ftn:boom:list}}$ & N$^{\ref{ftn:boom:list}}$ & Y$^{\ref{ftn:boom:MM}}$ & Y$^{\ref{ftn:boom:MM}}$ \\
    M \;\;\;\; & N$^{\ref{ftn:boom:list}}$ & N$^{\ref{ftn:boom:list}}$ & Y$^{\ref{ftn:boom:MM}}$ & Y$^{\ref{ftn:boom:MM}}$ \\
    P \;\;\;\; & N$^{\ref{ftn:boom:list}}$\footnote{\label{ftn:boom:idem} Theorem \ref{thm:nogoTreeIdem}} & N$^{\ref{ftn:boom:list}}$$^{\ref{ftn:boom:idem}}$ & N$^{\ref{ftn:boom:idem}}$  & N$^{\ref{ftn:boom:idem}}$\footnote{\label{ftn:boom:bartek} \citet[Theorem 3.2]{KlinSalamanca2018}}\footnote{\label{ftn:boom:gplotkin} Theorem \ref{thm:plotkin-1}} \\
    \hline\hline
    \end{tabular}
    \vspace{-2\baselineskip}
  \end{minipage}
\end{table}

\subsection{Extended Boom Hierarchy}

Tree, list, multiset and powerset are frequently-used data structures, which makes their grouping in the Boom hierarchy so compelling. However, it is rather restrictive if we hope to spot any patterns in the (non)-existence of distributive laws. Therefore we take a look at an extension of the Boom hierarchy, where instead of incrementally adding the properties of associativity, commutativity and idempotence, we look at all possible combinations of these properties. This yields the 8 different data structures tree, idempotent tree, commutative tree, commutative idempotent tree, list ($=$ associative tree), associative idempotent tree, multiset ($=$ associative commutative tree), and powerset ($=$ associative commutative idempotent tree). \reftab~\ref{BoomExtended} shows for which combinations of these there is a distributive law, and for which there is not, together with the theorems that prove it.

\begin{table}[h]
  \caption{Distributive laws for the extended Boom hierarchy}\label{BoomExtended}
    \begin{minipage}{\textwidth}
    \vspace{1\baselineskip}
    Overview of the existence of distributive laws $[row][column] \Rightarrow [column][row]$ (Y/N), with the monads tree (\treemonad), idempotent tree (I), commutative tree (C), commutative idempotent tree (CI), list/associative tree (L), associative idempotent tree (AI), multiset/associative commutative tree (M), and powerset/associative commutative idempotent tree (P). In each case, the theorems confirming the results are indicated.

    \begin{tabular}{lllllllll}
    \hline\hline
     \;\; & \treemonad\;\;\;\; & I \;\;\;\;\;\; & C \;\;\;\; & CI \;\;\;\; & L \;\;\;\; & AI \;\;\;\; & M \;\;\;\; & P \\ \hline
    \treemonad\;\;\;\; & N\footnote{\label{ftn:ext-boom:list} Theorem \ref{thm:nogoTreeList}} & N$^{\ref{ftn:ext-boom:list}}$ & N$^{\ref{ftn:ext-boom:list}}$ & N$^{\ref{ftn:ext-boom:list}}$ & N$^{\ref{ftn:ext-boom:list}}$ & N$^{\ref{ftn:ext-boom:list}}$ & Y\footnote{\label{ftn:ext-boom:MM} \citet[Theorem 4.3.4]{ManesMulry2007}} & Y$^{\ref{ftn:ext-boom:MM}}$ \\
    I & N$^{\ref{ftn:ext-boom:list}}$$^{\ref{ftn:ext-boom:idem}}$ & N$^{\ref{ftn:ext-boom:list}}$$^{\ref{ftn:ext-boom:idem}}$ & N$^{\ref{ftn:ext-boom:list}}$$^{\ref{ftn:ext-boom:idem}}$ & N$^{\ref{ftn:ext-boom:list}}$$^{\ref{ftn:ext-boom:idem}}$$^{\ref{ftn:ext-boom:gplotkin}}$ & N$^{\ref{ftn:ext-boom:list}}$$^{\ref{ftn:ext-boom:idem}}$ & N$^{\ref{ftn:ext-boom:list}}$$^{\ref{ftn:ext-boom:idem}}$ & N$^{\ref{ftn:ext-boom:idem}}$ & N$^{\ref{ftn:ext-boom:idem}}$$^{\ref{ftn:ext-boom:gplotkin}}$ \\
    C & N$^{\ref{ftn:ext-boom:list}}$ & N$^{\ref{ftn:ext-boom:list}}$ & N$^{\ref{ftn:ext-boom:list}}$ & N$^{\ref{ftn:ext-boom:list}}$ & N$^{\ref{ftn:ext-boom:list}}$ & N$^{\ref{ftn:ext-boom:list}}$ & Y$^{\ref{ftn:ext-boom:MM}}$& Y$^{\ref{ftn:ext-boom:MM}}$ \\
    CI & N$^{\ref{ftn:ext-boom:list}}$$^{\ref{ftn:ext-boom:idem}}$ & N$^{\ref{ftn:ext-boom:list}}$$^{\ref{ftn:ext-boom:idem}}$ & N$^{\ref{ftn:ext-boom:list}}$$^{\ref{ftn:ext-boom:idem}}$ & N$^{\ref{ftn:ext-boom:list}}$$^{\ref{ftn:ext-boom:idem}}$$^{\ref{ftn:ext-boom:gplotkin}}$ & N$^{\ref{ftn:ext-boom:list}}$$^{\ref{ftn:ext-boom:idem}}$ & N$^{\ref{ftn:ext-boom:list}}$$^{\ref{ftn:ext-boom:idem}}$ & N$^{\ref{ftn:ext-boom:idem}}$ & N$^{\ref{ftn:ext-boom:idem}}$$^{\ref{ftn:ext-boom:gplotkin}}$ \\
    L \;\; & N$^{\ref{ftn:ext-boom:list}}$ & N$^{\ref{ftn:ext-boom:list}}$ & N$^{\ref{ftn:ext-boom:list}}$ & N$^{\ref{ftn:ext-boom:list}}$ & N$^{\ref{ftn:ext-boom:list}}$ & N$^{\ref{ftn:ext-boom:list}}$ & Y$^{\ref{ftn:ext-boom:MM}}$ & Y$^{\ref{ftn:ext-boom:MM}}$ \\
    AI & N$^{\ref{ftn:ext-boom:list}}$$^{\ref{ftn:ext-boom:idem}}$ & N$^{\ref{ftn:ext-boom:list}}$$^{\ref{ftn:ext-boom:idem}}$ & N$^{\ref{ftn:ext-boom:list}}$$^{\ref{ftn:ext-boom:idem}}$ & N$^{\ref{ftn:ext-boom:list}}$$^{\ref{ftn:ext-boom:idem}}$$^{\ref{ftn:ext-boom:gplotkin}}$ & N$^{\ref{ftn:ext-boom:list}}$$^{\ref{ftn:ext-boom:idem}}$ & N$^{\ref{ftn:ext-boom:list}}$$^{\ref{ftn:ext-boom:idem}}$ & N$^{\ref{ftn:ext-boom:idem}}$ & N$^{\ref{ftn:ext-boom:idem}}$$^{\ref{ftn:ext-boom:gplotkin}}$ \\
    M \;\; & N$^{\ref{ftn:ext-boom:list}}$ & N$^{\ref{ftn:ext-boom:list}}$ & N$^{\ref{ftn:ext-boom:list}}$ & N$^{\ref{ftn:ext-boom:list}}$ & N$^{\ref{ftn:ext-boom:list}}$ & N$^{\ref{ftn:ext-boom:list}}$ & Y$^{\ref{ftn:ext-boom:MM}}$ & Y$^{\ref{ftn:ext-boom:MM}}$ \\
    P \;\; & N$^{\ref{ftn:ext-boom:list}}$\footnote{\label{ftn:ext-boom:idem} Theorem \ref{thm:nogoTreeIdem}} & N$^{\ref{ftn:ext-boom:list}}$$^{\ref{ftn:ext-boom:idem}}$ & N$^{\ref{ftn:ext-boom:list}}$$^{\ref{ftn:ext-boom:idem}}$ & N$^{\ref{ftn:ext-boom:list}}$$^{\ref{ftn:ext-boom:idem}}$$^{\ref{ftn:ext-boom:gplotkin}}$ & N$^{\ref{ftn:ext-boom:list}}$$^{\ref{ftn:ext-boom:idem}}$ & N$^{\ref{ftn:ext-boom:list}}$$^{\ref{ftn:ext-boom:idem}}$ & N$^{\ref{ftn:ext-boom:idem}}$ & N$^{\ref{ftn:ext-boom:idem}}$\footnote{\label{ftn:ext-boom:bartek} \citet[Theorem 3.2]{KlinSalamanca2018}}\footnote{\label{ftn:ext-boom:gplotkin} Theorem \ref{thm:plotkin-1}} \\
    \hline\hline
    \end{tabular}
    \vspace{-2\baselineskip}
  \end{minipage}
\end{table}

We see that it is only in the relatively special circumstances of \citet[Theorem 4.3.4]{ManesMulry2007} that distributive laws exist.

\subsection{Fully Extended Boom Hierarchy}

There is no distributive law for the list monad over itself, but as Manes and Mulry pointed out, there are at least three distributive laws for the non-empty list monad over itself. Allowing the empty structure, or in algebraic terms: a unit, clearly makes an important difference. Indeed, \refthms~\ref{thm:nogoTreeList} and \ref{thm:nogoTreeIdem} rely heavily on it. Consider now the fully extended Boom hierarchy, where every algebraic property (unit, associativity, commutativity, and idempotence) is varied to get a total of 16 different data structures. In \reftab~\ref{BoomExtendedFull}, we list all combinations of these data structures, and present our knowledge about the existence of distributive laws between them. As expected, \refthms~\ref{thm:nogoTreeList} and \ref{thm:nogoTreeIdem} do not contribute any new insights compared to \reftab~\ref{BoomExtended}. Theorem \ref{thm:plotkin-1}, however, does have more to offer.

\begin{table}[h]
  \caption{Distributive laws for the fully extended Boom hierarchy} \label{BoomExtendedFull}
    \begin{minipage}{\textwidth}
    \vspace{1\baselineskip}
    Overview of the existence of distributive laws $[row][column] \Rightarrow [column][row]$ (Y/N), with the following monads:
    tree (\treemonad), idempotent tree (I), commutative tree (C), commutative idempotent tree (CI), list/associative tree (L), associative idempotent tree (AI), multiset/associative commutative tree (M), powerset/associative commutative idempotent tree (P), and all their non-empty versions labelled by an additional +. Where neither Y or N are indicated, the existence of a distributive law is still unknown.

    \begin{tabular}{lllllllllllllllll}
    \hline\hline
     & \treemonad\;\;\;\; & I\;\;\;\;\; & C\;\;\;\; & CI\;\;\;\; & L\;\;\;\; & AI\;\;\;\; & M\;\;\;\; & P\;\;\;\; & \treemonad+\;\;\;\; & I+\;\;\;\;\; & C+\;\;\;\; & CI+\;\;\;\; & L+\;\;\;\; & AI+\;\;\;\; & M+\;\;\;\; & P+ \\ \hline
    \treemonad \;\;\;\; & N\footnote{\label{ftn:fe-boom:list} Theorem \ref{thm:nogoTreeList}} & N$^{\ref{ftn:fe-boom:list}}$ & N$^{\ref{ftn:fe-boom:list}}$ & N$^{\ref{ftn:fe-boom:list}}$ & N$^{\ref{ftn:fe-boom:list}}$ & N$^{\ref{ftn:fe-boom:list}}$ & Y\footnote{\label{ftn:fe-boom:MM} \citet[Theorem 4.3.4]{ManesMulry2007}} & Y$^{\ref{ftn:fe-boom:MM}}$ & Y$^{\ref{ftn:fe-boom:MM?}}$ & & & & & & Y$^{\ref{ftn:fe-boom:MM}}$ & Y$^{\ref{ftn:fe-boom:MM}}$\\
    I & N$^{\ref{ftn:fe-boom:list}}$$^{\ref{ftn:fe-boom:idem}}$ & N$^{\ref{ftn:fe-boom:list}}$$^{\ref{ftn:fe-boom:idem}}$ & N$^{\ref{ftn:fe-boom:list}}$$^{\ref{ftn:fe-boom:idem}}$ & N$^{\ref{ftn:fe-boom:list}}$$^{\ref{ftn:fe-boom:idem}}$$^{\ref{ftn:fe-boom:gplotkin}}$ & N$^{\ref{ftn:fe-boom:list}}$$^{\ref{ftn:fe-boom:idem}}$ & N$^{\ref{ftn:fe-boom:list}}$$^{\ref{ftn:fe-boom:idem}}$ & N$^{\ref{ftn:fe-boom:idem}}$ & N$^{\ref{ftn:fe-boom:idem}}$$^{\ref{ftn:fe-boom:gplotkin}}$ & & & & N$^{\ref{ftn:fe-boom:gplotkin}}$ & & & & N$^{\ref{ftn:fe-boom:gplotkin}}$ \\
    C & N$^{\ref{ftn:fe-boom:list}}$ & N$^{\ref{ftn:fe-boom:list}}$ & N$^{\ref{ftn:fe-boom:list}}$ & N$^{\ref{ftn:fe-boom:list}}$ & N$^{\ref{ftn:fe-boom:list}}$ & N$^{\ref{ftn:fe-boom:list}}$ & Y$^{\ref{ftn:fe-boom:MM}}$& Y$^{\ref{ftn:fe-boom:MM}}$ & Y$^{\ref{ftn:fe-boom:MM?}}$ & & & & & & Y$^{\ref{ftn:fe-boom:MM}}$ & Y$^{\ref{ftn:fe-boom:MM}}$ \\
    CI & N$^{\ref{ftn:fe-boom:list}}$$^{\ref{ftn:fe-boom:idem}}$ & N$^{\ref{ftn:fe-boom:list}}$$^{\ref{ftn:fe-boom:idem}}$ & N$^{\ref{ftn:fe-boom:list}}$$^{\ref{ftn:fe-boom:idem}}$ & N$^{\ref{ftn:fe-boom:list}}$$^{\ref{ftn:fe-boom:idem}}$$^{\ref{ftn:fe-boom:gplotkin}}$ & N$^{\ref{ftn:fe-boom:list}}$$^{\ref{ftn:fe-boom:idem}}$ & N$^{\ref{ftn:fe-boom:list}}$$^{\ref{ftn:fe-boom:idem}}$ & N$^{\ref{ftn:fe-boom:idem}}$ & N$^{\ref{ftn:fe-boom:idem}}$$^{\ref{ftn:fe-boom:gplotkin}}$ & & & & N$^{\ref{ftn:fe-boom:gplotkin}}$ & & & & N$^{\ref{ftn:fe-boom:gplotkin}}$ \\
    L\;\; & N$^{\ref{ftn:fe-boom:list}}$ & N$^{\ref{ftn:fe-boom:list}}$ & N$^{\ref{ftn:fe-boom:list}}$ & N$^{\ref{ftn:fe-boom:list}}$ & N$^{\ref{ftn:fe-boom:list}}$ & N$^{\ref{ftn:fe-boom:list}}$ & Y$^{\ref{ftn:fe-boom:MM}}$ & Y$^{\ref{ftn:fe-boom:MM}}$ & Y$^{\ref{ftn:fe-boom:MM?}}$ & & & & & & Y$^{\ref{ftn:fe-boom:MM}}$ & Y$^{\ref{ftn:fe-boom:MM}}$ \\
    AI & N$^{\ref{ftn:fe-boom:list}}$$^{\ref{ftn:fe-boom:idem}}$ & N$^{\ref{ftn:fe-boom:list}}$$^{\ref{ftn:fe-boom:idem}}$ & N$^{\ref{ftn:fe-boom:list}}$$^{\ref{ftn:fe-boom:idem}}$ & N$^{\ref{ftn:fe-boom:list}}$$^{\ref{ftn:fe-boom:idem}}$$^{\ref{ftn:fe-boom:gplotkin}}$ & N$^{\ref{ftn:fe-boom:list}}$$^{\ref{ftn:fe-boom:idem}}$ & N$^{\ref{ftn:fe-boom:list}}$$^{\ref{ftn:fe-boom:idem}}$ & N$^{\ref{ftn:fe-boom:idem}}$ & N$^{\ref{ftn:fe-boom:idem}}$$^{\ref{ftn:fe-boom:gplotkin}}$ & & & & N$^{\ref{ftn:fe-boom:gplotkin}}$ & & & & N$^{\ref{ftn:fe-boom:gplotkin}}$\\
    M \;\; & N$^{\ref{ftn:fe-boom:list}}$ & N$^{\ref{ftn:fe-boom:list}}$ & N$^{\ref{ftn:fe-boom:list}}$ & N$^{\ref{ftn:fe-boom:list}}$ & N$^{\ref{ftn:fe-boom:list}}$ & N$^{\ref{ftn:fe-boom:list}}$ & Y$^{\ref{ftn:fe-boom:MM}}$ & Y$^{\ref{ftn:fe-boom:MM}}$ & Y$^{\ref{ftn:fe-boom:MM?}}$ & & & & & & Y$^{\ref{ftn:fe-boom:MM}}$& Y$^{\ref{ftn:fe-boom:MM}}$\\
    P \;\; & N$^{\ref{ftn:fe-boom:list}}$\footnote{\label{ftn:fe-boom:idem} Theorem \ref{thm:nogoTreeIdem}} & N$^{\ref{ftn:fe-boom:list}}$$^{\ref{ftn:fe-boom:idem}}$ & N$^{\ref{ftn:fe-boom:list}}$$^{\ref{ftn:fe-boom:idem}}$ & N$^{\ref{ftn:fe-boom:list}}$$^{\ref{ftn:fe-boom:idem}}$$^{\ref{ftn:fe-boom:gplotkin}}$ & N$^{\ref{ftn:fe-boom:list}}$$^{\ref{ftn:fe-boom:idem}}$ & N$^{\ref{ftn:fe-boom:list}}$$^{\ref{ftn:fe-boom:idem}}$ & N$^{\ref{ftn:fe-boom:idem}}$ & N$^{\ref{ftn:fe-boom:idem}}$\footnote{\label{ftn:fe-boom:bartek} \citet[Theorem 3.2]{KlinSalamanca2018}}\footnote{\label{ftn:fe-boom:gplotkin} Theorem \ref{thm:plotkin-1}} & & & & N$^{\ref{ftn:fe-boom:gplotkin}}$ & & & & N$^{\ref{ftn:fe-boom:gplotkin}}$\\
    \treemonad+ \;\;     & & & & & & & Y$^{\ref{ftn:fe-boom:MM}}$& Y$^{\ref{ftn:fe-boom:MM}}$& Y\footnote{\label{ftn:fe-boom:MMtree} \citet[Example 3.9]{ManesMulry2008}} & & & & & & Y$^{\ref{ftn:fe-boom:MM}}$ & Y$^{\ref{ftn:fe-boom:MM}}$ \\
    I+ \;\;        & & & & N$^{\ref{ftn:fe-boom:gplotkin}}$ & & & & N$^{\ref{ftn:fe-boom:gplotkin}}$ & & & & N$^{\ref{ftn:fe-boom:gplotkin}}$ & & & & N$^{\ref{ftn:fe-boom:gplotkin}}$\\
    C+ \;\;        & & & & & & & Y$^{\ref{ftn:fe-boom:MM}}$& Y$^{\ref{ftn:fe-boom:MM}}$ & Y$^{\ref{ftn:fe-boom:MM?}}$ & & & & & & Y$^{\ref{ftn:fe-boom:MM}}$ & Y$^{\ref{ftn:fe-boom:MM}}$ \\
    CI+ \;\;       & & & & N$^{\ref{ftn:fe-boom:gplotkin}}$ & & & & N$^{\ref{ftn:fe-boom:gplotkin}}$ & & & & N$^{\ref{ftn:fe-boom:gplotkin}}$ & & & & N$^{\ref{ftn:fe-boom:gplotkin}}$\\
    L+ \;\;     & & & & & & & Y$^{\ref{ftn:fe-boom:MM}}$& Y$^{\ref{ftn:fe-boom:MM}}$ & Y$^{\ref{ftn:fe-boom:MM?}}$ & & & & Y\footnote{\label{ftn:fe-boom:MMne-list} \citet[Example 5.1.10]{ManesMulry2007} and \citet[Example 4.10]{ManesMulry2008}} & & Y$^{\ref{ftn:fe-boom:MM}}$ & Y$^{\ref{ftn:fe-boom:MM}}$\\
    AI+ \;\;       & & & & N$^{\ref{ftn:fe-boom:gplotkin}}$& & & & N$^{\ref{ftn:fe-boom:gplotkin}}$ & & & & N$^{\ref{ftn:fe-boom:gplotkin}}$ & & & & N$^{\ref{ftn:fe-boom:gplotkin}}$\\
    M+ \;\; & & & & & & & Y$^{\ref{ftn:fe-boom:MM}}$ & Y$^{\ref{ftn:fe-boom:MM}}$ & Y\footnote{\label{ftn:fe-boom:MM?} \citet[Example 4.9]{ManesMulry2008}} & & & & & & Y$^{\ref{ftn:fe-boom:MM}}$ & Y$^{\ref{ftn:fe-boom:MM}}$ \\
    P+ \;\; & & & & N$^{\ref{ftn:fe-boom:gplotkin}}$& & & &N$^{\ref{ftn:fe-boom:gplotkin}}$ & & & & N$^{\ref{ftn:fe-boom:gplotkin}}$ & & & & N$^{\ref{ftn:fe-boom:gplotkin}}$ \\
    \hline\hline
    \end{tabular}
    \vspace{-2\baselineskip}
  \end{minipage}
\end{table}

From this data, there is not yet a clear pattern emerging that allows us to predict whether a combination of monads will have a distributive law or not. It is our hope that by completing this table, we will learn more about the predictability or unpredictability of distributive laws between monads.

\newpage
\section{Conclusion and Future Work}
We have shown there can be no distributive law between large classes of monads:
\begin{itemize}
\item Section~\ref{Plotkin} developed general theorems for demonstrating when distributive laws cannot exist, derived from a classical counterexample of Plotkin.
\item Section~\ref{section:beyondPlotkin} went further, showing a new approach, yielding counterexamples beyond those possible in \refsec~\ref{Plotkin}.
\item Section~\ref{section:boom} then demonstrated the praciticality of our theorems, by investigating many combinations within an extension of the Boom type hierarchy.
\end{itemize}
We strongly advocated the use of algebraic methods. These techniques were used for all of our proofs. Taking this approach, rather than direct calculations involving Beck's axioms, enabled us to single out the essentials of each proof, so that the resulting theorems could be stated in full generality.

The four no-go theorems together reveal that many combinations of monads from the extended Boom hierarchy do not compose via distributive laws. There are, however, still gaps to be filled. Since this hierarchy has inspired us in finding many of the theorems presented in this paper, completing \reftab~\ref{BoomExtendedFull} will no doubt lead to more useful insights.

Another logical direction to expand this work is generalising the results beyond the category~$\catset$.

Lastly, we would like to mention that generalising the Plotkin counterexample is not the only way for this example to be useful. Julian Salamanca has demonstrated that understanding the general idea behind the example is just as important. He has shown that there is no distributive law of the group monad over the powerset monad $GP \Rightarrow PG$ \citep{Salamanca2018}. Although the idea of his proof is very similar to the Plotkin counterexample, the proof itself uses techniques beyond our generalisations.

\section{Acknowledgements}

We are very grateful to Jeremy Gibbons, Bartek Klin and Hector Miller-Bakewell for their helpful discussions, as well as their feedback on earlier versions of our proofs and this paper. We also would like to thank Prakash Panangaden, Julian Salamanca, Fredrik Dahlqvist, Louis Parlant, Sam Staton, Ohad Kammar, Jules Hedges, and Ralf Hinze for inspiring discussions, which all had a positive influence on this paper.

This work was partially supported by Institute for Information \& communications Technology Promotion(IITP) grant funded by the Korea government(MSIT) (No.2015-0-00565, Development of Vulnerability Discovery Technologies for IoT Software Security).\\

\bibliography{paper}
\bibliographystyle{jfp}
\end{document}